\declaretheorem[name=Definition,style=definition,
numberwithin=section]{dfn}
\declaretheorem[name=Theorem,style=plain,sibling=dfn]{tm}
\declaretheorem[name=Fact,style=plain,sibling=dfn]{factmine}
\declaretheorem[name=Lemma,style=plain,sibling=dfn]{lem}
\declaretheorem[name=Proposition,style=plain,sibling=dfn]{propos}
\declaretheorem[name=Corollary,style=plain,sibling=dfn]{cor}
\declaretheorem[name=Remark,style=definition,sibling=dfn]{rem}
\declaretheorem[name=Conjecture,style=definition,sibling=dfn]{conj}
\declaretheoremstyle[headfont=\scshape]{claimstyle}
\declaretheorem[name=Claim,style=claimstyle]{clm}
\declaretheorem[name=Claim,style=claimstyle,numbered=no]{clm*}
\newcommand{\nt}{\mathrm{nt}}
\newcommand{\illfp}{\mathrm{illfp}}
\newcommand{\Lll}{\mathscr{L}}
\newcommand{\cHull}{{\mathrm{c}\Hull}}
\newcommand{\iso}{\cong}
\newcommand{\RR}{\mathbb R}
\newcommand{\PP}{\mathbb P}
\newcommand{\sub}{\subseteq}
\newcommand{\cross}{\times}
\newcommand{\all}{\forall}
\newcommand{\om}{\omega}
\newcommand{\pow}{\mathcal{P}}
\newcommand{\OR}{\mathrm{OR}}
\newcommand{\Hull}{\mathrm{Hull}}
\newcommand{\cut}{\backslash}
\newcommand{\Ll}{\mathcal{L}}
\newcommand{\rg}{\mathrm{rg}}
\newcommand{\dom}{\mathrm{dom}}
\newcommand{\crit}{\mathrm{crit}}
\newcommand{\rest}{\!\upharpoonright\!}
\newcommand{\com}{\circ}
\newcommand{\Ult}{\mathrm{Ult}}
\newcommand{\sats}{\models}
\newcommand{\elem}{\preccurlyeq}
\newcommand{\AC}{\mathrm{AC}}
\newcommand{\DC}{\mathrm{DC}}
\newcommand{\HOD}{\mathrm{HOD}}
\newcommand{\ZFC}{\mathrm{ZFC}}
\newcommand{\ZF}{\mathrm{ZF}}
\newcommand{\dirlim}{\mathrm{dir lim}}
\newcommand{\id}{\mathrm{id}}
\newcommand{\conc}{\ \widehat{\ }\ }
\newcommand{\trancl}{\mathrm{trancl}}
\newcommand{\Vop}{\mathrm{Vop}}
\newcommand{\OD}{\mathrm{OD}}
\newcommand{\psub}{\subsetneq}
\newcommand{\tu}{\textup}
\DeclareMathOperator{\Th}{Th}
\DeclareMathOperator{\card}{card}
\DeclareMathOperator{\cof}{cof}
\DeclareMathOperator{\rank}{rank}
\author{Farmer Schlutzenberg\\
\ \\
Institute for Discrete Mathematics and Geometry\\
TU Vienna\\
farmer.schlutzenberg@gmail.com\\
}
\title{On the consistency of ZF with an elementary embedding from
$V_{\lambda+2}$ into $V_{\lambda+2}$\footnote{This is the author accepted version of \cite{con_lambda_plus_2_pub_online_first}.
For the published version see \url{https://doi.org/10.1142/S0219061324500132}}
\footnote{\copyright 2024. This work (author accepted version) is openly licensed via CC BY 4.0; see \url{https://creativecommons.org/licenses/by/4.0/}}}
\begin{document}

\maketitle

\begin{abstract}
According to a theorem due to Kenneth Kunen,
under ZFC, 
there is no ordinal $\lambda$ and non-trivial elementary embedding $j:V_{\lambda+2}\to 
V_{\lambda+2}$.
His proof relied on the Axiom of Choice (AC),
and no proof from ZF alone  has been discovered.

$I_{0,\lambda}$ is the assertion, introduced by W.~Hugh Woodin,
that $\lambda$ is an ordinal and there is an elementary embedding $j:L(V_{\lambda+1})\to L(V_{\lambda+1})$ with critical point ${<\lambda}$. And $I_0$ 
asserts that $I_{0,\lambda}$ holds for some $\lambda$.
The axiom $I_0$ is one of the strongest large cardinals not 
known to be inconsistent with AC.
It is usually
studied assuming ZFC in the full universe $V$ (in which case $\lambda$ must be a limit ordinal), but we  assume only ZF. 

We prove, assuming ZF + $I_{0,\lambda}$ +  ``$\lambda$ is an even ordinal'', that  there is a proper 
class transitive inner model $M$ containing $V_{\lambda+1}$
and satisfying
ZF + $I_{0,\lambda}$ + ``there is an elementary embedding 
$k:V_{\lambda+2}\to 
V_{\lambda+2}$'';  in fact we will have $k\sub j$,
where $j$ witnesses $I_{0,\lambda}$ in $M$.
 This result was first proved by the author under the added assumption that $V_{\lambda+1}^\#$ exists; Gabe Goldberg noticed that this extra assumption was unnecessary.
If also $\lambda$ is a limit ordinal
and $\lambda$-$\DC$  holds in $V$,
then the model $M$ will also satisfy $\lambda$-$\DC$.

We show 
that ZFC + ``$\lambda$ is even'' + $I_{0,\lambda}$
implies $A^\#$ exists for every $A\in V_{\lambda+1}$,
but if consistent, this theory does not  imply $V_{\lambda+1}^\#$ exists.
\end{abstract}

\section{The Kunen inconsistency}

A long standing open question in set theory is whether $\ZF$ suffices to prove
that there is no (non-trivial)\footnote{Generally
we will omit the phrase ``non-trivial'' in this context,
where it should obviously be there; sometimes
also ``elementary''.} elementary embedding
\[ j:V\to V.\]
Such embeddings were introduced by William N.~Reinhardt in 
\cite{reinhardt_diss}, 
\cite{reinhardt_remarks},
and the critical point of such an embedding is now known as a Reinhardt 
cardinal. A fundamental theorem is Kunen's result \cite{kunen_no_R} 
that
such an 
embedding is inconsistent with $\ZFC$. His proof involves infinitary 
combinatorics in $V_{\lambda+2}$, and
he in fact shows under $\ZFC$ that there is no elementary
\[ j:V_{\lambda+2}\to V_{\lambda+2}.\]
These discoveries led set theorists to ask various natural questions, such as:
\begin{itemize}
\item[--] Does $\ZF$ suffice to prove there is no elementary $j:V\to V$?
\item[--] Does $\ZF$ suffice to prove there is no elementary $j:V_{\lambda+2}\to 
V_{\lambda+2}$?
\item[--] Does $\ZFC$ prove there is no elementary $j:V_{\lambda+1}\to 
V_{\lambda+1}$?
\end{itemize}
However, if a given large cardinal notion is consistent, then
of course we will 
never know so; the best we can do is to prove
relative consistencies, and to explore the consequences of the large cardinal. 
Moreover, if $\ZF$ is in fact consistent with $k:V_{\lambda+2}\to 
V_{\lambda+2}$, this might suggest that defeating $j:V\to V$ without $\AC$ 
could demand significant modifications to Kunen's $\ZFC$ argument
(and other arguments which have appeared since).

The main aim of this paper is to show that the theory
$\ZF+$``$k:V_{\lambda+2}\to V_{\lambda+2}$ is elementary''
is 
consistent relative to large cardinals which have been extensively studied 
in the context of $\ZFC$.

In \S\ref{sec:elementary_lambda+2}, we give an overview of the main theorem, and in \S\ref{sec:background} we give some background definitions and facts.
In \S\ref{sec:main}, we state  the main result in detail and give its proof.
In the later sections, we make some further related observations.
 In \S\ref{sec:embeddings},
we analyze the existence and uniqueness of extensions of embeddings
related to $I_{0,\lambda}$-embeddings. In \S\ref{sec:HOD}, assuming 
$I_{0,\lambda}$, we prove
some facts on $\HOD^{L(V_{\lambda+1})}_N$ for a certain inner model $N\sub L(V_{\lambda+1})$
which arises in the proof of the main theorem. In \S\ref{sec:sharps},
we prove that $\ZF+I_{0,\lambda}$  implies that $A^\#$ exists for every $A\in 
V_{\lambda+1}$. In \S\ref{sec:consistencies},
we collect some corollaries on relative consistencies. Most of the results in the paper hold for arbitrary 
even ordinals
$\lambda$,
and $I_{0,\lambda}$ in this sense, as discussed in 
\S\ref{sec:elementary_lambda+2}.

 \subsection{Notation}\label{subsec:notation}
 Given a structure $N$ and $X\sub N$,
 $\Hull^N(X)$ denotes the set of all $y\in N$ such that
 $y$ is definable over $N$ from parameters in $X$,
 and  $\cHull^N(X)$ is the transitive collapse of this hull,
 assuming $N$ is transitive
 (with signature including the language of set theory)
and $\Hull^N(X)$ is extensional.
We write $\Th^N(X)$ for the elementary theory of $N$ in parameters in $X$.

Given an embedding $j$, we write $\kappa_0(j)=\crit(j)$ 
for the critical point 
of $j$, and $\kappa_\om(j)=\sup_{n<\om}\kappa_n$ where $\kappa_0=\kappa_0(j)$
and $\kappa_{n+1}=j(\kappa_n)$ (when well defined).

Regarding ultrapowers, we write $\Ult_0(M,\mu)$
for the ultrapower of structure $M$ modulo the $M$-ultrafilter  or $M$-extender $\mu$,
formed using only functions in $M$.\footnote{The notation ``$0$'' might look arbitrary
here, but in certain contexts one also
considers variants $\Ult_n$ with $0<n\leq\om$,
which can modify the class of functions used to form the ultrapower.}
In this context, if $f$ is a function used in forming the ultrapower,
then $[f]^{M,0}_\mu$ denotes the element represented by $f$,
but normally we abbreviate this with just
$[f]^M_\mu$ or just $[f]$, if the meaning is clear.
Note that because we do not assume AC, \L o\'{s}'s Theorem is not
always immediate for our ultrapowers. We say that the ultrapower
``satisfies \L o\'{s}'s Theorem'' to mean that the usual statement of \L o\'{s}'s
Theorem goes through with respect to this ultrapower
(basically that the ultrapower satisfies $\varphi([f])$ iff
$M\sats\varphi(f(k))$ for $\mu$-measure one many $k$).
(This is an abuse of terminology,
as otherwise when we write ``$U$ satisfies $\psi$''
we mean $U\sats\psi$ in the model theoretic sense.)

Let $\lambda$ be a limit ordinal and $X$ a set.
 A \emph{tree of height $\leq\lambda$ on $X$}
 is a set $\mathscr{T}\sub{^{<\lambda}X}$
 which is closed under initial segment.
 That is, each element of $\mathscr{T}$ is a function $f$
 such that $\alpha=\dom(f)\in\lambda$ and $f:\alpha\to X$,
 and $f\rest\beta\in\mathscr{T}$ for each $f\in\mathscr{T}$ and $\beta<\dom(f)$.
 For $\alpha\leq\lambda$ and $f:\alpha\to X$,
 we say that $f$ is a \emph{maximal branch} of $\mathscr{T}$
 iff either (i) $f\in\mathscr{T}$ but there is no $g\in\mathscr{T}$
 with $f\psub g$,
 or (ii) $\alpha$ is a limit ordinal or $\alpha=0$, and $f\rest\beta\in\mathscr{T}$
 for all $\beta<\alpha$ but $f\notin\mathscr{T}$.
 
Recall that for $\lambda$ a limit ordinal,
$\lambda$-$\DC$ is the statement
 that for every set $X$ and every tree $\mathscr{T}$ of height $\leq\lambda$ on 
$X$, there is a maximal branch of $\mathscr{T}$.\footnote{We remark 
that the 
possibly more standard definition of $\lambda$-$\DC$
is superficially slightly different; it asserts that for every non-empty set $X$
and every $R\sub({^{<\lambda}X})\cross X$ such that
\[ \text{ for every }f\in{^{<\lambda}X}\text{
there is some }x\in X\text{ such that }(f,x)\in R,\]
there is $f\in{^\lambda}X$ such that $(f\rest\alpha,f(\alpha))\in R$
for every $\alpha<\lambda$.

The two forms are equivalent modulo $\ZF$. For given a tree 
$\mathscr{T}$
of height $\leq\lambda$ on set $X\neq\emptyset$, define
$R\sub({^{<\lambda}X})\cross X$
by putting $(f,x)\in R$ iff either (i) $f\conc\left<x\right>\in\mathscr{T}$
or (ii) there is no $y\in X$ such that $f\conc\left<y\right>\in\mathscr{T}$.
Clearly $R$ satisfies the requirements for the second form of $\lambda$-$\DC$,
and letting $f:\lambda\to X$ be as there,
note that $f\rest\alpha$ is a maximal branch through $\mathscr{T}$
for some $\alpha\leq\lambda$. Conversely,
let $X,R$ be as in the second form. Let $\mathscr{T}$
be the tree of all functions $f:\alpha\to X$ where $\alpha<\lambda$
and $(f\rest\beta,f(\beta))\in R$ for all $\beta<\alpha$. Let $f$
be a maximal branch through $\mathscr{T}$. Then easily
$\dom(f)=\lambda$, which suffices.}\footnote{Many
authors use the notation $\DC_\lambda$.
We use $\lambda$-$\DC$  to distinguish it from
another notion: $\DC_\RR$ normally denotes
the version of $\om$-$\DC$ in which the set $X$ is only allowed to be $\RR$.}

Given a transitive set $X$, $\Theta_X$
denotes the supremum of all ordinals $\alpha$
such that $\alpha$ is the surjective image of $X$.

We discuss sharps and Silver indiscernibles in Remark \ref{rem:Silver}.

Given a structure $M$ for the language of set theory
(either a set or proper class),
and given $n\leq\om$, we write
we write $\mathscr{E}_n(M)$ for the collection of $\Sigma_n$-elementary embeddings $j:M\to M$,
and $\mathscr{E}(M)=\mathscr{E}_\om(M)$.
(If $M$ is proper class, this could in general
be beset with technical complications,
as the elements of $\mathscr{E}_n(M)$
are then themselves proper classes.
But in the case we will only actually use the notation when simultaneously 
restricting to embeddings $j$ which are determined by some set, avoiding such complications in practice.)
We also write $\mathscr{E}_{\nt}(M)=\mathscr{E}(M)\cut\{\id\}$ for the set of non-trivial (that is, non-identity) elementary embeddings $M\to M$; likewise for $\mathscr{E}_{\nt,n}(M)$.

\section{$\ZF+ j:V_{\lambda+2}\to V_{\lambda+2}$}\label{sec:elementary_lambda+2}

Recall that the large cardinal axiom $I_{0}$,
introduced by Woodin,
asserts that there is  an
ordinal $\lambda$ and an elementary embedding
$j\in\mathscr{E}(L(V_{\lambda+1}))$
with critical point $\crit(j)<\lambda$.

The hypothesis $I_0$ sits just below the boundary
of where Kunen's argument applies. It and strengthenings thereof have 
been 
studied by  Cramer, Dimonte, Shi, Woodin and others, and an extensive 
structure theory has been developed with strong analogues to the Axiom 
of Determinacy; see for example \cite{cramer}, \cite{dimonte}, \cite{shi_higher_degrees} and \cite{woodin_sem2}.
Many others have also studied  large cardinal
notions in this vicinity.
Although $I_{0}$,
witnessed by $j\in\mathscr{E}(L(V_{\lambda+1}))$, implies that the Axiom of Choice $\AC$
fails in $L(V_{\lambda+1})$, the hypothesis is usually considered
assuming $\ZFC$ in the background universe $V$.
However, $\AC$ will be mostly  unimportant here, and we assume in 
general only $\ZF$.  In this more general context, we make the following definition:

\begin{dfn} Assume $\ZF$ and let $\lambda$ be an even ordinal.
An \emph{$I_{0,\lambda}$-embedding}
 is an elementary
embedding $j\in\mathscr{E}(L(V_{\lambda+1}))$
with $\crit(j)<\lambda$ (hence $\crit(j)<\eta$
where $\eta$ is the largest limit ordinal $\leq\lambda$).

And $I_{0,\lambda}$ is the assertion that
there exists an $I_{0,\lambda}$-embedding.
\end{dfn}

\begin{rem}[Canonical extension]\label{rem:standard_extension}
Using the results of \cite{cumulative_periodicity_pub_online_first} or 
\cite{goldberg_even_ordinals_v3},
we will prove most of our results for arbitrary even ordinals
$\lambda$, not just limits.
This actually makes virtually no difference
to the arguments, and moreover, the reader unfamiliar with
\cite{cumulative_periodicity_pub_online_first} can pretty much
just replace every instance of ``even'' in the paper with ``limit'',
without losing much. In \S\ref{sec:sharps} the argument
differs a little more between the limit and successor cases, however.

Assume $\ZF$. A standard observation is that if $j\in\mathscr{E}_{\nt}(V_\lambda)$ 
 where $\lambda$ is a limit, then 
there is a unique
 possible extension of $j$ to some $j^+\in\mathscr{E}(V_{\lambda+1})$; this is defined
\[ j^+(A)=\bigcup_{\alpha<\lambda}j(A\cap V_\alpha). \]
In fact this is the unique possible candidate
for $j^+\in\mathscr{E}_1(V_{\lambda+1})$ extending $j$.

The key result we need from  \cite{cumulative_periodicity_pub_online_first}
is that this fact actually generalizes to all \emph{even} $\lambda$ 
(that is, $\lambda=\eta+2n$ for some
 $\eta\in\mathrm{Lim}\cup\{0\}$ (that is, $\eta$ is either a limit ordinal or $0$) and $n<\om$):
if $j\in\mathscr{E}_1(V_{\lambda+1})$, then  $j$ is also the \emph{canonical extension}
$(j\rest V_\lambda)^+$ of $j\rest V_\lambda$,
but if $\lambda$ is not a limit,  the definition of the canonical
extension is of course
 not so obvious. Moreover,
$j$ is definable over $V_{\lambda+1}$
from the parameter $j\rest V_\lambda$, and uniformly so in even 
ordinals $\lambda$ and 
$j$.
These facts allow one to lift much of the standard theory
of $I_{0,\lambda}$ to arbitrary even $\lambda$
(a project on which Gabe Goldberg has begun work; this is the topic of 
\cite{goldberg_even_ordinals_v3}).

 Actually the argument in \cite{cumulative_periodicity_pub_online_first}
 shows something a little more general:
 Let $\mathscr{L}$ be a transitive inner model of ZF and $\lambda\in\mathscr{L}$
  be an even ordinal. Let $j\in\mathscr{E}_1(V_{\lambda+1}^{\mathscr{L}})$ with
  $k=j\rest V_{\lambda}^{\mathscr{L}}\in \mathscr{L}$.
 Then $j\in \mathscr{L}$, and hence, 
applying
 \cite{cumulative_periodicity_pub_online_first}
 in $\mathscr{L}$,
 we have that $\mathscr{L}\sats$``$j=k^+$''. Note that
 only in the case that $V_{\lambda+1}^{\mathscr{L}}=V_{\lambda+1}$ would this
conclusion follow literally
 from the results of \cite{cumulative_periodicity_pub_online_first}.
 But without assuming that $j\in\Lll$, we can simply apply the usual (first-order in parameter $k$) definition 
 of $k^+$
over $V_{\lambda+1}^{\Lll}$,
and observe that this results in $j$,
and therefore $j\in\Lll$.

For most of the paper, the previous two paragraphs probably suffice as a black box for the canonical extension,
but in 
\S\ref{sec:sharps}, the reader will probably need
to know the details of the definition
for the successor case to make proper sense.

Note that even  in the case that $\lambda$ is a limit, we have
a more general context than standard $I_{0,\lambda}$,
because we do not assume $\AC$;
 one consequence of this is that $\lambda$
need not be the supremum of the critical sequence
of $j$.\footnote{Recall that under $\ZFC$, $I_{0,\lambda}$ implies
$\lambda=\kappa_{\om}(j)$;
this yields that $\lambda$ is a strong limit cardinal,
$V_\lambda$ has  a wellorder in $V_{\lambda+1}$,
and $L(V_{\lambda+1})\sats\lambda$-$\DC$.}
\end{rem}

The main relative consistency result we prove is the following.
The author first proved a slightly weaker result, in
 which the axiom ``$V_{\lambda+1}^\#$ exists''
 was also incorporated in the theory assumed consistent (but then reached the same 
conclusion).
See Remark \ref{rem:Silver} for a brief discussion of $V_{\lambda+1}^{\#}$. Goldberg then observed that the
assumption of $V_{\lambda+1}^\#$ could be dispensed
with, by using instead some further calculations of Woodin's from 
\cite{woodin_sem2}, establishing the stronger result stated below. Likewise
for the detailed version Theorem \ref{tm:main}.
Although the theorem itself seems to be 
new, the main methods in the proof are in fact old,
and in the most part due to Woodin. There are a couple of places
where one seems to need to adapt the details of standard kinds
of arguments a little, but the main methods
are well known.

\begin{tm}\label{tm:simpler}
If the theory
 \[ \ZF+\text{``there is a limit }\lambda\text{  such that 
}\lambda\text{-}\DC+I_{0,\lambda}\text{''}\]
is consistent then so is 
\[\begin{array}{rcl} \ZF&+&\text{``there is  
a limit }\lambda\text{
such that }\lambda\text{-}\DC+I_{0,\lambda}\\
&&\text{ and there is  an elementary  }
k:V_{\lambda+2}\to 
V_{\lambda+2}\text{''}.\end{array}\]

More generally, let $n$ be an integer of the meta-theory.
 If the theory
 \[ \ZF+\text{``there are even ordinals }\bar{\lambda},\lambda\text{
such that }\lambda=\bar{\lambda}+2n\text{ and }I_{0,\lambda}\text{''}\]
is consistent then so is 
\[\begin{array}{rcl} \ZF&+&\text{``there are even ordinals }\bar{\lambda},\lambda\text{ such that }\lambda=\bar{\lambda}+2n\text{
and }I_{0,\lambda}\\
&&\text{ and there is  an elementary  }
k:V_{\lambda+2}\to 
V_{\lambda+2}\text{''}.\end{array}\]
\end{tm}

We will in fact get much more information than that stated above.
 Let $\lambda$ be even and $\Lll=L(V_{\lambda+1})$
and assume $I_{0,\lambda}$, as witnessed by $j\in\mathscr{E}(\Lll)$. We will show in Proposition \ref{prop:ultrapowers} that $j(\lambda)=\lambda$.
So letting
$j'=j\rest 
V_{\lambda+2}^{\Lll}$,
then
\[ j':V_{\lambda+2}^{\Lll}\to 
V_{\lambda+2}^{\Lll} \]
is elementary. 
Assume also that $j$ is \emph{proper};
this is discussed in \ref{dfn:proper}, and means basically that $j$ is determined by $j'$; in particular, $\mathscr{L}[j]=\mathscr{L}[j']$, where $\mathscr{L}[k]=L(V_{\lambda+1},k)$.
Woodin asked in Remark 26 of \cite{woodin_sem2} (pp.~160--161), 
assuming further that $\ZFC$ holds in $V$ (so $\lambda$ is a limit),
whether 
\begin{equation}\label{eqn:neq} V_{\lambda+2}^{\Lll[j]}\neq 
V_{\lambda+2}^{\Lll} \end{equation}
must hold. This is actually just an 
instance of a more general question he asked.

Theorem \ref{tm:main} will show that  in fact, (\ref{eqn:neq}) \emph{cannot}  hold
(just assuming $\ZF$ in $V$ and $\lambda$ even);
that is, we actually have
\[ V_{\lambda+2}^{\Lll[j]}=V_{\lambda+2}^{\Lll}\]
(though as mentioned above, Woodin's question was more general,
and 
Theorem \ref{tm:main} only literally applies to the one case of his question  stated above).
It will follow that
\[ \Lll[j]\sats\ZF+I_{0,\lambda}+\text{``}j':V_{\lambda+2}\to V_{\lambda+2}\text{ 
is elementary''},\]
and  if $\bar{\lambda}$ is the largest limit ordinal $\leq\lambda$
and 
$\bar{\lambda}$-$\DC$ holds in $V$ then we
will also get that
$\Lll[j]\sats\bar{\lambda}$-$\DC$, giving the main consistency result.

We will simultaneously address another related question raised by Woodin,
which deals with sharps, and in particular, $V_{\lambda+1}^{\#}$:

\begin{rem}\label{rem:Silver}
Let $X$ be a transitive set.
Recall that $X^\#$, if it exists, is the theory of
Silver indiscernibles for $\Lll=L(X\cup\{X\})$; its existence
is equivalent to the existence of a $j\in\mathscr{E}_{\nt}(\Lll)$ with $\crit(j)>\rank(X)$ (another fact proved by Kunen).
Recall the Silver indiscernibles are a club proper class $\mathscr{I}$ of 
ordinals,
which are indiscernibles for the model $\Lll$ with respect to all 
parameters
in $X\cup\{X\}$, and such that every element of $\Lll$
is definable (equivalently, $\Sigma_1$-definable) from finitely many 
elements of $\mathscr{I}\cup X\cup\{X\}$. 
Note $X^\#$ relates to $\Lll$
just as $0^\#$ relates to $L$.\footnote{In fact,
if one forces over $V$ by collapsing $X$ to be come
countable as usual with finite conditions, and $G$ is the resulting generic, 
then $X^\#$ is almost the same as
$x^\#$ where $x$ is a real equivalent to $(X,G)$,
and the theory of $x^\#$ is just the relativization of that of $0^\#$.}
In particular, $X^\#$ is uniquely determined,
the corresponding class $\mathscr{I}$ of Silver indiscernibles is uniquely determined,
$X^\#\sub(X\cross\omega)^{<\om}$ (in the codes),
$X^\#\notin\Lll$, and in fact,
$X^\#$ cannot be 
added by set-forcing to $\Lll$. Moreover,
if $N$ is a proper class transitive model of $\ZF$
and $X\cup\{X\}\sub N$ and $N\sats$``$X^\#$ exists'',
then $X^\#$ exists and is in $N$ (computed correctly there).
We write $\mathscr{I}^{X}$
for the class $\mathscr{I}$  Silver indsicernibles mentioned above,
or usually just write $\mathscr{I}^{\Lll}$, when $X$ is clear from context. (In fact, $X$ will usually be of the form $V_{\lambda+1}^{\Lll}$ where $\lambda$ is the least even ordinal such that $\Lll=L(V_{\lambda+1}^{\Lll})$. The notation $\mathscr{I}^{\Lll}$ is formally ambiguous, because, for example, if  we set $X'=X\cup\{X\}\cup\kappa$,
where $\kappa=\min(\mathscr{I}^X)$,
then $L(X\cup\{X\})=L(X'\cup\{X'\})$,
but $\mathscr{I}^X\neq\mathscr{I}^{X'}$.)\end{rem}

Now Woodin writes in Remark 26 of \cite{woodin_sem2}
(p.~161) that a ``natural 
conjecture'' 
is that if $I_{0,\lambda}$ holds, then $V_{\lambda+1}^\#$ exists
(assuming $\ZFC$, so here $\lambda$ is a limit).
Note that with the model $\Lll[j]$ from before, we will have
\[ \Lll[j]\sats\ZF+I_{0,\lambda}+\text{``}V_{\lambda+1}^\#
 \text{  does 
not exist''}\]
since otherwise $\Lll[j]\sats$``$V_{\lambda+1}^\#\in 
L(V_{\lambda+1})$'', a contradiction.
Now this does not quite disprove the conjecture just stated,
if $\ZFC$ is really the background theory.
But assuming $\ZFC$ in $V$,
then there is an easy forcing argument
to reestablish $\ZFC$ in a generic extension of $\Lll[j]$,
while preserving $V_{\lambda+1}$, hence preserving $I_{0,\lambda}$,
thus obtaining a failure of the $V_{\lambda+1}^\#$ conjecture.

The more general applicability of the canonical extension mentioned in Remark \ref{rem:standard_extension}
(applying to embeddings $j\in\mathscr{E}_1(V_{\lambda+1}^{\Lll})$ of the kind mentioned there) will be useful,
and allows us to establish the main theorem of the paper (Theorem \ref{tm:main}) in a context somewhat more general than that of literal  $I_{0,\lambda}$-embeddings, without changing the work required in any significant way. We will consider embeddings $j:\Lll\to\Lll$ of the following form:

\begin{dfn}\label{dfn:relevant}
We say $(\Lll,\lambda,j)$
is a \emph{relevant triple} iff
$\Lll$ is a transitive proper class model of ZF, $\lambda$ is an even ordinal,
$\Lll=L(V_{\lambda+1}^{\Lll})$,
$j:\Lll\to\Lll$ is elementary,
$\crit(j)<\lambda$ 
and $j\rest V_\lambda^{\Lll}\in\Lll$.
\end{dfn}

Remark \ref{rem:standard_extension} gives:
\begin{lem}
 Let $(\Lll,\lambda,j)$ be relevant. Suppose $j(\lambda)=\lambda$ \tu{(}we will show that this does indeed hold in Proposition \ref{prop:ultrapowers} below\tu{)}. Then  $j\rest V_{\lambda+1}^{\Lll}\in\Lll$ \tu{(}and therefore
 $j\rest V_{\lambda+1}^{\Lll}=((j\rest V_{\lambda})^+)^\Lll$, the canonical extension of $j\rest V_{\lambda}^{\Lll}$ as computed in $\Lll$\tu{)}.
\end{lem}

\begin{dfn}\label{dfn:proper}
 Let $(\Lll,\lambda,j)$ be relevant.
 Corresponding to the terminology of \cite{woodin_sem2},
 we say that $j$ is \emph{proper}
 iff either 
 \begin{enumerate}[label=(\roman*)]
  \item 
 $(V_{\lambda+1}^{\Lll})^\#$ does not exist,
 or 
 \item $(V_{\lambda+1}^{\Lll})^\#$ exists
 and $j\rest\mathscr{I}^{V_{\lambda+1}^{\Lll}}=\id$.
 \end{enumerate}
\end{dfn}

We will see in Proposition \ref{prop:ultrapowers}
that if $j$ is proper then it is determined by a set-sized restriction, and that ``$(\Lll,\lambda,j)$ is relevant with $j$ proper'' is first order.

We now describe the main point of the construction, for those readers familiar with the analysis of $\HOD^{L[x,G]}$. Let us assume for the 
purposes of this sketch that 
$V_{\lambda+1}^\#$ exists
where $\lambda$ is even, and write
$\Lll=L(V_{\lambda+1})$.
Let $j:\Lll\to\Lll$ witness $I_{0,\lambda}$.
By Proposition \ref{prop:ultrapowers} below, we may
and do assume that $j$ is proper
(that is, $j\rest\mathscr{I}^{\Lll}=\id$).
We will show later that $j$ is iterable
(this will be defined in \ref{dfn:iterates} below, but it is analogous
to linearly iterating $V$ with a measure).
Let $\left<\Lll_n,j_n\right>_{n\leq\om}$ be the length $(\om+1)$ 
iteration
of $(\Lll_0,j_0)=(\Lll,j)$. (Again, this will be described fully in \ref{dfn:iterates}, but it implies in particular that $\Lll_n=\Lll$ for each $n<\om$, $j_0=j$ and, roughly, ``$j_{n+1}=j_n(j_n)=j(j_n)$'',
and $\Lll_\om$ is the direct limit of $\Lll$ under these maps.) Let $j_{0\om}:\Lll_0\to\Lll_\om$ be the 
resulting direct limit map. 
Let $k_{0\om}=j_{0\om}\rest V_{\lambda+2}^{\Lll}$.
We will later see that
$\Lll[k_{0\om}]=\Lll[j]$.
Now there is a strong analogy between the models
\[ \Lll[k_{0\om}]\text{ and }M_1[\Sigma], \]
where $M_1$ is the 
minimal iterable proper class fine structural
mouse with 1 Woodin cardinal $\delta$  and $\Sigma$ an appropriate fragment of its 
iteration strategy. Moreover, under that analogy,
$V_{\lambda+2}^{\Lll[k_{0\om}]}=V_{\lambda+2}^{\Lll}$ corresponds reasonably to
$V_\delta^{M_1[\Sigma]}=V_\delta^{M_1}$.
(A key fact about $M_1[\Sigma]$
is that, indeed, $V_\delta^{M_1[\Sigma]}=V_\delta^{M_1}$ (assuming $\Sigma$ is chosen appropriately).)
Actually aside from some small extra details with \L o\'{s}'s Theorem 
without full $\AC$ (which
however are as in  \cite{woodin_sem2}), 
and the necessity of proving iterability,
things are simpler here than 
for $M_1[\Sigma]$.
 The reader familiar with Woodin's analysis of $M_1[\Sigma]$
(see \cite{HOD_as_core_model})
and possibly some of Woodin's arguments in \cite{woodin_sem2}
and/or the analysis of Silver indiscernibles in 
\cite{Theta_Woodin_in_HOD} or \cite{vm1},
may now wish to use the hint just mentioned to try to work
through the proof
independently, maybe assuming the iterability just mentioned and the existence of $V_{\lambda+1}^{\#}$.

There are, however, a couple of points where some details of the implementation
are not quite standard, as far as the author is aware.
One  is that the proof of iterability
we will give combines two techniques in a way that might be new,
although this is anyway unnecessary in the case that $\ZFC$ holds in
$V$, by Woodin's iterability results under $\ZFC$.\footnote{
See Lemma 21 of \cite{woodin_sem2};
the proof seems to make at least some use of $\DC$.
We haven't tried to adapt the methods there directly; we give a somewhat
different proof of iterability here.}
Another is that in the $M_1[\Sigma]$ context, towards the end of the 
calculation,
one forms the hull $H=\Hull^{M_\infty[\Lambda]}(\rg(i_{M_1\infty}))$ and shows
that $H\cap M_\infty=\rg(i_{M_1\infty})$, and hence that $M_1[\Sigma]$
is the transitive collapse.
In the  mouse case, it is immediate that $H\elem M_\infty[\Lambda]$.
But in our case the analogue seems to require a little more thought.

And of course, a formally new feature is that $\lambda$ 
is allowed 
be an arbitrary even ordinal (and we only assume $\ZF$).
But modulo \cite{cumulative_periodicity_pub_online_first}
(see Remark \ref{rem:standard_extension}), 
the argument in the even successor case is virtually identical
to the limit case.

\section{Background}\label{sec:background}

We begin by collecting some general facts and properties of models of the form $L(X)$,
and also more specifically of relevant triples,
which will be needed later on.

\begin{propos}\label{prop:Theta_properties}
Let $X$ be a transitive set of rank $\geq\om$.
Let $\Theta=\Theta_{X^{<\om}}^{L(X)}$.
Then:
\begin{enumerate}[label=\arabic*.,ref=\arabic*]\item\label{item:Theta_matches_P(X)} $\Theta$ is the least ordinal $\theta$
such that $\pow(X^{<\om})\cap L(X)\sub L_\theta(X)$.
\item\label{item:cofinally_many_pwd} For cofinally many $\gamma<\Theta$, we have
$L_\gamma(X)=\Hull^{L_\gamma(X)}(X\cup\{X\})$.
\item\label{item:Theta_regular} $L(X)\sats$``$\Theta$ is regular'',
and in fact, there is no $f\in L(X)$ and $\eta<\Theta$ such that $f:\eta\cross X^{<\om}\to\Theta$ and $f$ is cofinal in $\Theta$.
\end{enumerate}
\end{propos}
\begin{proof}
 Part \ref{item:Theta_matches_P(X)} is a quite routine consequence of condensation in $L(X)$.
 
 Part \ref{item:cofinally_many_pwd}: This is by a standard condensation 
argument; here it is:  By part \ref{item:Theta_matches_P(X)},
fixing $A\in\pow(X^{<\om})\cap L(X)$, it suffices to find some $\gamma$ as 
advertised
with $A\in L_\gamma(X)$.
Now  \[L(X)=\Hull_{\Sigma_1}^{L(X)}(X\cup\{X\}\cup\OR).\]
So let $\alpha<\beta\in\OR$ with $A$ being $\Sigma_1$-definable 
over $L_\beta(X)$ from parameters in $X\cup\{X,\alpha\}$. Let 
\[ H=\Hull_{\Sigma_{1}}^{L_\beta(X)}(X\cup\{X,\alpha\})\]
and $C$ be the transitive collapse. Then $C=L_\gamma(X)$
for some $\gamma<\Theta$, and $A\in C$, and
\[ C\sats\text{``There is
an ordinal }\alpha'\text{ such that 
}V=\Hull_{\Sigma_1}^V(X\cup\{X\}\cup\{\alpha'\})\text{''.}\]
But then the least such $\alpha'$ is definable over $C$ from the parameter
$X$, and therefore $C=\Hull^C(X\cup\{X\})$.

 For part \ref{item:Theta_regular}, suppose otherwise and fix such $\eta,f$. Let $g:\eta\to\Theta$ be the function $g(\alpha)=\sup\{f(\alpha,\vec{x})\bigm|\vec{x}\in X^{<\om}\}$,
 noting  that $g(\alpha)<\Theta$.
 Then $g$ is also cofinal in $\Theta$.
 Now $L(X)$ can define a set $A\sub\eta\cross\Theta$ which is not in $L_\Theta(X)$, by diagonalization. That is, given $\alpha<\eta$, let $\beta_\alpha$ be the least $\beta>g(\alpha)$ such that $L_\beta(X)=\Hull^{L_\beta(X)}(X\cup\{X\})$,
 and let $A_\alpha$ be the theory $\Th^{L_\beta(X)}(X\cup\{X\})$, coded naturally as a subset of $X^{<\om}$. Now  let $A=\{(\alpha,\vec{x})\bigm|\alpha<\eta\wedge \vec{x}\in A_\alpha\}$.
 Since $\eta<\Theta$, we can now find a set $A'\in L(X)$ with $A'\sub X^{<\om}$ and $A'$ coding $A$, and hence with $A'\in L(X)\cut L_\Theta(X)$, a contradiction.
\end{proof}

\begin{propos}\label{prop:Gamma_generates} Let $X$ be  a transitive set.
Let $\Gamma$ be a proper class of ordinals with $\mathscr{I}^{X}\sub\Gamma$ if $X^{\#}$ exists. Then $L(X)=\Hull^{L(X)}(X\cup\{X\}\cup\Gamma)$.
\end{propos}
\begin{proof} 
Suppose not.
Then $X^{\#}$ does not exist. Let $H=\Hull^{L(X)}(X\cup\{X\}\cup\Gamma)$.
Then $H\preccurlyeq\Lll$,
since $L(X)\sats$``$V=\OD_{X\cup\{X\}}$''.
Note that $L(X)$ is the transitive collapse of $H$,
and letting $k:L(X)\to L(X)$ be the uncollapse map,
then $\rank(X)<\crit(k)$, as $X$ is transitive
and the rank function is definable over $L(X)$. But then $X^\#$ exists,
contradiction.
\end{proof}

We now prove a fact reminiscent of the Zipper Lemma (Theorem 6.10 of \cite{outline}):

\begin{propos}\label{prop:Zipper_analogue}
Let $X$ be a  non-empty transitive set. Let
$\Theta=\Theta^{L(X)}_{X^{<\om}}$.
Let $j_i\in\mathscr{E}_1(L_\Theta(X))$, for $i=0,1$, with $j_i(X)=X$
 and $j_0\rest X=j_1\rest X$, 
 but $j_0\neq j_1$.  Then:
\begin{enumerate}[label=\arabic*.,ref=\arabic*]
 \item\label{item:range_intersection_bounded} 
 $\rg(j_0)\cap\rg(j_1)$ is bounded in $L_\theta(X)$; in fact,
letting $\alpha<\Theta$ be least such that 
 $j_0(\alpha)\neq j_1(\alpha)$, we have 
 $(j_0``\Theta)\cap(j_1``\Theta)=j_0``\alpha=j_1``\alpha$.
 \item\label{item:cof(Theta)=om} If $j_0\rest X\in L(X)$ then 
$\cof(\Theta)=\om$ \tu{(}in $V$\tu{)}.
\end{enumerate}
\end{propos}
\begin{proof}
Part \ref{item:range_intersection_bounded}: We may assume 
$j_0(\alpha)<j_1(\alpha)$. Therefore
$\sup j_1``\alpha=\sup j_0``\alpha\leq j_0(\alpha)<j_1(\alpha)$,
so $j_0(\alpha)\in\rg(j_0)\cut\rg(j_1)$.

Suppose $j_0(\alpha)<\beta<\theta$ and 
$\beta\in\rg(j_0)\cap\rg(j_1)$. Then the least $\xi>\beta$
with
\[ L_\xi(X)=\Hull^{L_\xi(X)}(X\cup\{X\}) \]
is also in $\rg(j_0)\cap\rg(j_1)$. But then note that
\[ \rg(j_0)\cap L_\xi(X)=
\Hull^{L_\xi(X)}((j_i``X)\cup\{X\})=
 \rg(j_1)\cap L_\xi(X),
\]
for $i=0,1$, and therefore $j_0(\alpha)\in\rg(j_1)$, contradiction.

Part \ref{item:cof(Theta)=om}: Suppose $j_i\rest X\in L(X)$. Let 
$\alpha_0=\alpha$, and given $\alpha_n$,
let $\alpha_{n+1}=\max(j_0(\alpha_n),j_1(\alpha_n))$.
Notice by induction that $\alpha_{n+1}>\alpha_n$ for each $n<\om$.
Let $\gamma=\sup_{n<\om}\alpha_n$; we claim that $\gamma=\Theta$.
For if $\gamma<\Theta$ then  because $j_i\rest X\in 
L(X)$, and by Proposition 
\ref{prop:Theta_properties} part \ref{item:cofinally_many_pwd}
and arguing as in the proof of part \ref{item:range_intersection_bounded} of the present lemma,
we get $j_0\rest\gamma\in L(X)$ and 
$j_1\rest\gamma\in L(X)$.
But  $\left<\alpha_n\right>_{n<\om}$ can be computed
from $j_0\rest\gamma$ and $j_1\rest\gamma$, so it is in $L(X)$.
But then $j_0,j_1$ are both continuous at $\gamma$,
so  $j_0(\gamma)=\gamma=j_1(\gamma)$, contradicting
part \ref{item:range_intersection_bounded}.
\end{proof}

We will now state some definitions and facts about  extensions of given embeddings
to $I_{0,\lambda}$-embeddings $j$, in particular on existence and uniqueness of such extensions, and in fact some generalizations thereof in which $j$ will be the embedding of a relevant triple $(\Lll,\lambda,j)$. These things are basically
as in \cite{woodin_sem2} (although recall we only assume $\ZF$);
 also see \cite{cumulative_periodicity_pub_online_first}
for more on the measure defined below:

\begin{dfn}
Let $(\Lll,\lambda,j)$ be relevant.
 Suppose $j(\lambda)=\lambda$ (cf.~Proposition \ref{prop:ultrapowers}).
 Then $\mu_j$ denotes the $\Lll$-ultrafilter on $V_{\lambda+1}^{\Lll}$ derived from $j$ with seed 
$j\rest V_\lambda^{\Lll}$:
\[ \mu_j=\big\{X\in V_{\lambda+2}^{\Lll}\bigm| j\rest V_\lambda^{\Lll}\in 
j(X)\big\}.\]
Recall from \S\ref{subsec:notation} that the ultrapower $\Ult_0(\Lll,\mu_j)$
is formed using 
only functions 
$f\in\Lll$ (such that $f:V_{\lambda+1}^{\Lll}\to\Lll$).
We write $[f]^{\Lll}_{\mu_j}$, or just $[f]$, for the element of the ultrapower
represented by $f$. Given such functions $f,g$, we write $f=_{\mu_j}g$ iff
\[ \big\{k\in V_{\lambda+1}^{\Lll}\bigm|f(k)=g(k)\big\}\in\mu_j,\]
and similarly for $f\in_{\mu_j}g$.

 We say that $\mu_j$ is \emph{weakly amenable} to $\Lll$ iff
 for each $\pi:V_{\lambda+1}^{\Lll}\to V_{\lambda+2}^{\Lll}$
 with $\pi\in\Lll$, we have $\rg(\pi)\cap \mu_j\in\Lll$.
\end{dfn}

The following lemma is just a combination of standard calculations with a shift of calculations of  Woodin from \cite{woodin_sem2} over to the $\ZF$ + ``$\lambda$ even'' context
(using \cite{cumulative_periodicity_pub_online_first})
and allowing arbitrary relevant models:
\begin{propos}\label{prop:ultrapowers}
Let $(\Lll,\lambda,j)$ be relevant. Then:
 \begin{enumerate}[label=\arabic*.,ref=\arabic*]
  \item\label{item:j(lambda)=lambda}$j(\lambda)=\lambda$ is the least even ordinal $\lambda'$ such that $\Lll=L(V_{\lambda'+1}^{\Lll})$.\end{enumerate}
 Let $U=\Ult_0(\Lll,\mu_j)$ and $j_0:\Lll\to U$ be the ultrapower map.
 Then:
 \begin{enumerate}[label=\arabic*.,ref=\arabic*]
   \setcounter{enumi}{1}
  \item\label{item:mu_j_weakly_amenable}$\mu_j$ is weakly amenable to $\Lll$,
  \item $U$ is extensional and isomorphic to $\Lll$;
  we take $U=\Lll$,
  \item $j_0:\Lll\to U=\Lll$ is elementary,
  \item $j_0\rest L_\Theta(V_{\lambda+1}^{\Lll
})=j\rest L_\Theta(V_{\lambda+1}^{\Lll
})$
  where $\Theta=\Theta_{V^{\Lll
}_{\lambda+1}}^{\Lll}$ \tu{(}see \S\ref{subsec:notation}\tu{)},
  \item if $(V_{\lambda+1}^{\Lll
})^\#$ does not exist then $j_0=j$ is proper
  and $[f]^{\Lll}_{\mu_j}=j(f)(j\rest V_\lambda^{\Lll
})$.
 \end{enumerate}
 Now suppose further that $(V_{\lambda+1}^{\Lll
})^\#$ exists.
 Then:
 \begin{enumerate}[label=\arabic*.,ref=\arabic*]
  \setcounter{enumi}{6}
  \item $j_0\rest\mathscr{I}^{\Lll}=\id$, so $j_0$ is proper,
 \item  $j``\mathscr{I}^{\Lll}\sub\mathscr{I}^{\Lll}$,
 \end{enumerate}
 and letting $j_1:\Lll\to\Lll$ be the unique elementary map
with $\crit(j_1)\geq\min(\mathscr{I}^{\Lll})$
 and $j_1\rest\mathscr{I}^{\Lll}\sub j$, we have
 \begin{enumerate}[label=\arabic*.,ref=\arabic*]
   \setcounter{enumi}{8}
  \item\label{item:j_1_com_j_0=j} $j_1\com j_0=j$ and $j_1([f]^{\Lll}_{\mu_j})=j(f)(j\rest V_\lambda^{\Lll
})$.
 \end{enumerate}
 Moreover \tu{(}still assuming $(V_{\lambda+1}^{\Lll
})^\#$ exists\tu{)}, this is the unique pair 
$(j'_0,j'_1)$
 such that
 $j'_i:\Lll\to\Lll$ is elementary for $i=0,1$,
 $j=j'_1\com j'_0$,
$j'_0$ is proper and $\crit(j'_1)\geq\lambda$.
\end{propos}

\begin{proof}We first prove  everything assuming
part \ref{item:j(lambda)=lambda}. So  $j(\lambda)=\lambda$.

First consider weak amenability.
Let $\pi\in\Lll$ with $\pi:V_{\lambda+1}^{\Lll
}\to V_{\lambda+2}^{\Lll}$.
Then $j(\pi):V_{\lambda+1}^{\Lll
}\to V_{\lambda+2}^{\Lll}$,
and since $j\rest V_\lambda^{\Lll
}\in V_{\lambda+1}^{\Lll
}$,
by Remark \ref{rem:standard_extension}, we have $k=j\rest V_{\lambda+1}^{\Lll
}\in 
\Lll
$. But then note that for $x\in V_{\lambda+1}^{\Lll
}$,
we have
\[ \pi(x)\in\mu_j\iff j\rest V_\lambda^{\Lll
}\in 
j(\pi)(j(x))\iff j\rest V_\lambda^{\Lll
}\in j(\pi)(k(x)), \]
and since $j(\pi),k\in \Lll
$, therefore $\rg(\pi)\cap\mu_j\in\Lll$, as desired.

Now write
$\mathscr{F}=\{f\in\Lll\bigm|f:V_{\lambda+1}^{\Lll
}\to\Lll\}$,
the class of functions used to form $U$.
Let
\[ U'=\{j(f)(j\rest V_\lambda^{\Lll
})\bigm|f\in\mathscr{F}\} \]
and
\[ H=\Hull^{\Lll}(\rg(j)\cup\{j\rest V_\lambda^{\Lll
}\}). \]
Then $U\iso U'=H$.
For $U\iso U'$ directly from the definition. Clearly
$U'\sub H$.
Conversely, let $x\in H$, and let $a\in\mathscr{L}$ and $\varphi$ be a formula such that
\[ x=\text{ the unique }x'\in\Lll\text{ such that 
}\Lll\sats\varphi(x',j(a),j\rest V_\lambda^{\Lll
}). \]
Let $f:V^{\Lll
}_{\lambda+1}\to\Lll$ be the function
\[ f(k)=\text{ the unique }x'\text{ such that }\Lll\sats\varphi(x',a,k), \]
if a unique such $x'$ exists, and $f(k)=\emptyset$ otherwise.
Then
$j(f)(j\rest V_\lambda^{\Lll
}) = x\in U'$, as desired.

Now $V_{\lambda+1}^{\Lll
}\sub H$, because given any $y\in V_{\lambda+1}^{\Lll
}$,
we have
\[ y = (j\rest V_\lambda^{\Lll
})^{-1}``j(y).\]
Since $\Lll\sats$``$V=\HOD(V_{\lambda+1})$'',
it follows that $H\elem\Lll$.

Now since $U\iso U'=H\elem\Lll$, $U$ is extensional and wellfounded,
so we identify it with its transitive collapse.
Let $j_0:\Lll\to U$ be the ultrapower map,
and $j_1:U\to\Lll$ be the natural factor map
\[ j_1([f])=j(f)(j\rest V_\lambda^{\Lll
}).\]
Note then that $U$ is just the transitive collapse
of $H$, which is just $\Lll$, and $j_1$ is the uncollapse map.
Since $H\elem\Lll$, $j_1$ is elementary
and if $j_1\neq\id$ then $\crit(j_1)>\lambda$,
in which case $(V^{\Lll
}_{\lambda+1})^\#$ exists
and $\crit(j_1)\geq\min(\mathscr{I})$
and $j_1$ is determined by its action on $\mathscr{I}$,
where $\mathscr{I}=\mathscr{I}^{\Lll}$.
Note that $j=j_1\com j_0$.
So $j_0=j_1^{-1}\com j$ is elementary
with $j_0\rest V_\lambda^{\Lll
}=j\rest V_\lambda^{\Lll
}$.

Now suppose $j_1\neq\id$, so $(V_{\lambda+1}^{\Lll
})^\#$ exists.
Then we claim  $j_0\rest\mathscr{I}=\id$.
This is a standard calculation with Silver indiscernibles:
We have
\[ \Lll=\Hull^{\Lll}(\mathscr{I}\cup V_{\lambda+1}^{\Lll
}),\]
\begin{equation}\label{eqn:U_hull_of_j_0``I_cup_V_lambda+1}\Lll=U=\Hull^{\Lll}(\rg(j_0)\cup\{j\rest V_\lambda^{\Lll
}\})=\Hull^{\Lll}((j_0``\mathscr{I})\cup V_{\lambda+1}^{\Lll
}).\end{equation}
But  $j_0$ is continuous at all points in $\mathscr{I}$ (as the functions used to form the ultrapower are in $\mathscr{L}$), from which it follows that $j_0``\mathscr{I}$ is a club class of model-theoretic indiscernibles relative to parameters in $V_{\lambda+1}^{\Lll
}$,
and therefore that
\begin{equation}\label{eqn:j_0``I_sub_I} j_0``\mathscr{I}\sub\mathscr{I}.\end{equation} But supposing $j_0``\mathscr{I}\psub\mathscr{I}$, let $\gamma$ be least in $\mathscr{I}\cut j_0``\mathscr{I}$. Then by lines (\ref{eqn:U_hull_of_j_0``I_cup_V_lambda+1}) and (\ref{eqn:j_0``I_sub_I}),
we can fix $\vec{\varepsilon}\in\mathscr{I}\cut\{\gamma\}$ and $x\in V_{\lambda+1}^{\Lll
}$ such that $\gamma$ is definable over $\Lll$ from $(\vec{\varepsilon},x)$,
contradicting indiscernibility.

The rest of parts \ref{item:mu_j_weakly_amenable}--\ref{item:j_1_com_j_0=j} is clear, under our assumption that part \ref{item:j(lambda)=lambda} holds.
So now let us verify part \ref{item:j(lambda)=lambda}. Let $\widetilde{\lambda}$ be the least even ordinal such that $\Lll=L(V_{\widetilde{\lambda}+1}^{\Lll})$. Clearly
$j(\widetilde{\lambda})=\widetilde{\lambda}$, so we may assume $\widetilde{\lambda}<\lambda$.
Now $\crit(j)<\widetilde{\lambda}$,
because otherwise,
$\widetilde{\lambda}<\crit(j)<\lambda$, but
since $j\rest V_\lambda^{\mathscr{L}}\in\mathscr{L}$,
this implies that
$\Lll\sats$``$V_{\widetilde{\lambda}+1}^\#$ exists'',
contradicting the fact that $\Lll=L(V_{\widetilde{\lambda}+1}^{\Lll})$.
So note that the hypotheses of the proposition  hold with $\widetilde{\lambda}$ replacing $\lambda$. So by what we established above, the conclusion of the proposition holds with respect to the relevant triple $(\Lll,\widetilde{\lambda},j)$.

Now since $j(\widetilde{\lambda})=\widetilde{\lambda}$ but $j(\lambda)>\lambda$, we have $\widetilde{\lambda}+\om<\lambda$. Therefore $j\rest V^{\Lll
}_{\widetilde{\lambda}+2}\in \Lll$.
Defining the derived ultrafilter $\widetilde{\mu}_j$
as before, but from seed $j\rest V_{\widetilde{\lambda}}^\Lll$, then $\widetilde{\mu}_j$
is defined using only
$j\rest V^{\Lll}_{\widetilde{\lambda}+2}$,
so $\widetilde{\mu}_j\in\Lll$. But it follows from the preceding parts that $\Lll\sats$ ``There is a ($\Sigma_1$-)elementary $k:V\to V$ which is definable from the parameter $\widetilde{\mu}_j$'', which contradicts \cite{suzuki_no_def_j}.
\end{proof}

\begin{dfn}\label{dfn:standard_decomp}
Under the assumptions of the previous lemma,
the factoring $j=j_1\com j_0$ is the \emph{standard decomposition}
of $j$, where if $(V_{\lambda+1}^{\Lll
})^\#$ does not exist then $j=j_0$ and $j_1=\id$.
 And defining $\lambda^{\Lll}$
 to be the least even ordinal $\lambda'$
 such that $\Lll=L(V_{\lambda'+1}^{\Lll})$, we have $\lambda^{\Lll}=\lambda$
 by part \ref{item:j(lambda)=lambda}.
\end{dfn} 

\begin{dfn}\label{dfn:s^-} Let $\Lll$ be a transitive proper class model of ZF and $\lambda$ be even and suppose that $\lambda$ is the least even ordinal $\lambda'$ such that $\Lll=L(V_{\lambda'+1}^{\Lll})$. For $\alpha\in\OR\cut(\lambda+1)$,
letting $\alpha=\lambda+\beta$,
we write $\Lll|\alpha=L_\beta(V_{\lambda+1}^{\Lll})$,
so $\OR(\Lll|\alpha)=\alpha$.
Let $\vec{\OR}=[\OR\cut(\lambda+1)]^{<\om}\cut\{\emptyset\}$. For $s\in\vec{\OR}$
we write $s^-=s\cut\{\max(s)\}$,
and  define the theory \[\Th^{\Lll}_s=\Th^{\Lll|\max(s)}(V_{\lambda+1}^{\Lll}\cup\{s^-\}). \]

Now  suppose  also that $(V_{\lambda+1}^{\Lll})^\#$ exists. 
Let $0<n<\om$ and $s\in[\mathscr{I}^{\Lll}]^{<\om}$ with $\card(s)=n$.
Then let $t_n$
denote the theory $\Th_{s}^{\Lll}$ coded naturally as a subset of $V_{\lambda+1}^{\Lll}$ (parameters in $s^-$ are coded via
some constant symbols in $V_\om$).
Define the ordinal
  \[ \bar{\iota}_n=\OR(\cHull^{\Lll|\max(s)}(V_{\lambda+1}^{\Lll}\cup\{s^-\})) \]
  (see \S\ref{subsec:notation} for notation). (Of course some of this notation is ambiguous as it depends on $\Lll$, but $\Lll$ will be clear from context.)
\end{dfn}

\begin{rem}\label{rem:sharp_and_cof(Theta)}
 Let $\lambda$ be even and least such that $\Lll=L(V_{\lambda+1}^{\Lll})$.
 Let $\Theta=\Theta_{V_{\lambda+1}^{\Lll}}^{\Lll}$. Suppose $(V_{\lambda+1}^{\Lll})^{\#}$ exists. Then $\cof(\Theta)=\om$,
 and in fact, with notation as in Definition \ref{dfn:s^-},
 we have $\bar{\iota}_n<\bar{\iota}_{n+1}$ for all $n\in(0,\om)$,
 and $\Theta=\sup_{n\in(0,\om)}\bar{\iota}_n$.
\end{rem}

\begin{propos}\label{prop:extensions}
 Let $\lambda$ be even and suppose $\lambda$ is the least even ordinal  such that $\Lll=L(V_{\lambda+1}^{\Lll})$.  Let $\Theta=\Theta_{V_{\lambda+1}^{\Lll}}^{\Lll}$.
Then:
 \begin{enumerate}[label=\arabic*.,ref=\arabic*]
  \item\label{item:unique_ext} Let
$\ell:V_\lambda^{\Lll}\to V_\lambda^{\Lll}$ be elementary
with $\ell\in\Lll$. Then $\ell$
extends to at most one proper $j:\Lll\to\Lll$.
\item\label{item:k_extends_to_k'} Let $k:V_{\lambda+2}^{\Lll}\to V_{\lambda+2}^{\Lll}$ be $\Sigma_1$-elementary with $k\rest V_\lambda^{\Lll}\in\Lll$. Then:
\begin{itemize}
\item[--]  $k$ is fully elementary,
\item[--] there is a unique $\Sigma_1$-elementary $k':L_\Theta(V_{\lambda+1}^{\Lll})\to L_\Theta(V_{\lambda+1}^{\Lll})$ with $k\sub k'$.
\end{itemize}
\item\label{item:k'_comes_from_k} Let $k':L_\Theta(V_{\lambda+1}^{\Lll})\to L_\Theta(V_{\lambda+1}^{\Lll})$ be $\Sigma_1$-elementary with $k'\rest V_\lambda^{\Lll}\in\Lll$. Then:
\begin{itemize}
 \item[--] $k'$ is fully elementary,
 \item[--] $k'\rest V_{\lambda+2}^{\Lll}:V_{\lambda+2}^{\Lll}\to V_{\lambda+2}^{\Lll}$ is  elementary.
\end{itemize}
 \item  Suppose $(V_{\lambda+1}^{\Lll})^\#$ exists.
  \label{item:sharps_emb_ext_charac}
  Let $k:L_\Theta(V_{\lambda+1}^{\Lll})\to L_\Theta(V_{\lambda+1}^{\Lll})$ be 
$\Sigma_1$-elementary with $k\rest V_\lambda^{\Lll}\in\Lll$.
 Then the following are equivalent:
 \begin{itemize}\item[--] $k$ extends to an elementary $j:\Lll\to\Lll$,
 \item[--] $k$ extends to a proper elementary $j:\Lll\to\Lll$,
 \item[--] $k(t_n)=t_n$ for each $n\in(0,\om)$,
 \item[--] $k(\bar{\iota}_n)=\bar{\iota}_n$ for 
each $n\in(0,\om)$.
\end{itemize}
 \end{enumerate}
\end{propos}
\begin{proof}
Part \ref{item:unique_ext}: First, the canonical
extension $\ell^+$ is the unique possible elementary
extension of $\ell$ to a map $V_{\lambda+1}^{\Lll}\to V_{\lambda+1}^{\Lll}$, by
\ref{rem:standard_extension}. So if
$(V_{\lambda+1}^{\Lll})^\#$ exists
then since $\Lll=\Hull^{\Lll}(\mathscr{I}^{\Lll}\cup V_{\lambda+1}^{\Lll})$,
the uniqueness of a proper extension of $\ell$ is clear.
Suppose $(V_{\lambda+1}^{\Lll})^\#$ does not exist,
and $\ell_0,\ell_1$ are two proper extensions of $\ell$,
hence extending $\ell^+$.
Then by Proposition \ref{prop:ultrapowers},
$\ell_0,\ell_1$ are the ultrapower maps with respect to 
$\mu_{\ell_0},\mu_{\ell_1}$. Therefore we can find a proper class $\Gamma$
of common fixed points for $\ell_0,\ell_1$.
Since
$\ell_0\rest V_{\lambda+1}^{\Lll}=\ell^+=\ell_1\rest V_{\lambda+1}^{\Lll}$,
therefore $\ell_0\rest H=\ell_1\rest H$ where
$H=\Hull^{\Lll}(\Gamma\cup V_{\lambda+1}^{\Lll})$.
But by Proposition  \ref{prop:Gamma_generates},
$\Lll=H$, which suffices.

Part \ref{item:k_extends_to_k'}: 
Let
$k\in\mathscr{E}_1(V_{\lambda+2}^\Lll)$ with $k\rest V_\lambda^{\Lll}\in\Lll$; hence $k\rest V_{\lambda+1}^{\Lll}\in\Lll$. By Proposition \ref{prop:Theta_properties}, $V_{\lambda+2}^{\Lll}\sub L_\Theta(V_{\lambda+1}^{\Lll})$. Now to see that $k$ extends uniquely to 
a $k'\in\mathscr{E}_1(L_\Theta(V_{\lambda+1}))$,
it is easily enough to see that if ${\leq^*}\in V_{\lambda+2}^{\Lll}$ is a prewellorder
on $V_{\lambda+1}^{\Lll}$ then $k({\leq^*})$ is wellfounded. But since $k({\leq^*})\in V_{\lambda+2}^{\Lll}$,
 note that the wellfounded and illfounded parts of $k({\leq^*})$ are in $\Lll$,
 and hence in $V_{\lambda+2}^{\Lll}$.
 But the existence of a non-empty illfounded part is just a $\Sigma_1^{V_{\lambda+2}^{\Lll}}$
 assertion about $k({\leq^*})$.
So ${\leq^*}$ is illfounded, contradiction.

Now it follows that $k'$ is in fact fully elementary, because
by Proposition \ref{prop:Theta_properties},
$L_\Theta(V_{\lambda+1}^{\Lll})\sats$ Collection.
Since  $V_{\lambda+2}^{\Lll}$ is a definable subclass of $L_\Theta(V_{\lambda+1}^{\Lll})$, it  follows that $k$ is  fully elementary.

Part \ref{item:k'_comes_from_k}: By some of the preceding considerations.

Part \ref{item:sharps_emb_ext_charac}: This is basically routine.
The fact that if $k$ extends to an elementary $j:\Lll\to\Lll$ then $k$ also extends to a proper one, is by Proposition \ref{prop:ultrapowers}.
The fact that if $k$ extends to a proper elementary $j$ then 
$k(\bar{\iota}_n)=\bar{\iota}_n$, is because
$\bar{\iota}_n$ is definable over $\Lll$ from $s$ and $\lambda$, where 
$s\in[\mathscr{I}^{\Lll}]^{n}$.
\end{proof}

We now define iterability, following \cite{woodin_sem2}:

\begin{dfn}\label{dfn:iterates}
 Let $(\Lll,\lambda,j)$ be relevant
 with $j$ proper. We (attempt to) define 
the $\alpha$th 
iterate
 $(\Lll_\alpha,\lambda_\alpha,j_\alpha,\mu_\alpha)$ of 
$(\Lll,\lambda,j,\mu_j)$, with $(\Lll_\alpha,\lambda_\alpha,j_\alpha)$ relevant and $j_\alpha$ proper. We set 
$\Lll_0=\Lll$ and $\lambda_0=\lambda$ and $j_0=j$ and $\mu_0=\mu_j$. Given 
$(\Lll_\alpha,\lambda_\alpha,j_\alpha,\mu_\alpha)$
  such that $(\Lll_\alpha,\lambda_\alpha,j_\alpha)$ is relevant,
 $j_\alpha$ proper  and $\mu_\alpha=\mu_{j_\alpha}$
 (so \ref{prop:ultrapowers} applies, and in particular $\mu_\alpha$ is weakly amenable to $\Lll_\alpha$), set 
$\Lll_{\alpha+1}=\Lll_\alpha$ and
$\lambda_{\alpha+1}=\lambda_\alpha$ and letting
\[ \mu_\alpha^-=\{\rg(\pi)\cap \mu_{\alpha}\bigm|\pi\in \Lll_\alpha\text{ and 
}\pi:V_{\lambda_\alpha+1}^{\Lll_\alpha}\to V_{\lambda_\alpha+2}^{\Lll_\alpha}\}, \]
set
\[ \mu_{\alpha+1}=\bigcup_{X\in \mu_\alpha^-}j_\alpha(X). \]
 By \cite{cumulative_periodicity_pub_online_first},
  $\mu_{\alpha+1}$ is an $\Lll_{\alpha+1}$-ultrafilter.
If $\Ult_0(\Lll_{\alpha+1},\mu_{\alpha+1})$ satisfies \L o\'{s}'s Theorem,
is wellfounded and $=\Lll_{\alpha+1}$, then we set
$j_{\alpha+1}$ to be the ultrapower map, and otherwise stop the construction.
At limit stages $\eta$ we take direct limits to define 
$\Lll_\eta,\lambda_\eta,\mu_\eta$, and then attempt to define $j_\eta$ as before. If we reach some $(\Lll_\gamma,\lambda_\gamma,j_\gamma,\mu_\gamma)$ but $(\Lll_\gamma,\lambda_\gamma,j_\gamma)$ is not relevant, $j_\gamma$ not proper, or $\mu_\gamma\neq \mu_{j_{\gamma}}$, then we stop the construction there.

We say that $(\Lll,j)$ is \emph{$\eta$-iterable} iff $\Lll_\alpha$ is 
defined (and 
wellfounded) for all $\alpha<\eta$. We say that $(\Lll,j)$ is \emph{iterable}
iff $\eta$-iterable for all $\eta\in\OR$.\end{dfn}

\begin{dfn}\label{dfn:external_iterates}
Let $(\Lll,\lambda,j)$ be relevant with $j$ proper.
If $N\sats\ZF$ is a transitive class with $\Lll=L(V_{\lambda+1})^N$
and $V_{\lambda+2}^{\Lll}=V_{\lambda+2}^N$ and $\mu_j\in N$,
then the iterates of $N$ with respect to $\mu_j$
are just defined internally to $N$ (just like iterating $V$
with respect to a measure under $\ZFC$).
In this case, we say that $N$ is a \emph{tight extension}
of $\Lll$ if for every function $f\in N$ with $f:V_{\lambda+1}^N\to\Lll$
there is $g\in\Lll$ such that $f=_{\mu_j}g$.
\end{dfn}

\begin{lem}\label{lem:tight_extension}
 Adopt notation as above, and suppose that $N$ is a tight extension of $\Lll$.
 Suppose that the ultrapower $\Ult_0(N,\mu_j)$ satisfies \L o\'{s}'s Theorem.
Then $\mu_j$ is iterable, both in the sense of \ref{dfn:iterates} and \ref{dfn:external_iterates}.
Moreover, let
$\left<\Lll_\alpha,\lambda_\alpha,\mu_\alpha,j_\alpha\right>_{\alpha\in\OR}$
be the iterates of $(\Lll,\mu_j)$ and $j_{\alpha\beta}:\Lll_\alpha\to\Lll_\beta$
the iteration maps, and
$\left<N_\alpha,\mu^+_\alpha\right>_{\alpha\in\OR}$
be the \tu{(}internal\tu{)} iterates of $(N,\mu_j)$ and $j^+_{\alpha\beta}$ the 
iteration 
maps.
Then $\Lll_\alpha=j^+_{0\alpha}(\Lll)$ and $\mu_\alpha=\mu^+_\alpha$
and $j_{\alpha\beta}\sub j^+_{\alpha\beta}$,
and  the iteration maps are elementary.
\end{lem}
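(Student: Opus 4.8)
The plan is to prove everything by simultaneous induction on $\alpha$, establishing at each stage that (i) the $\alpha$th iterate $\Lll_\alpha$ of $\Lll$ and the $\alpha$th internal iterate $N_\alpha$ of $N$ are both defined and wellfounded, (ii) $\Lll_\alpha = j'_{0\alpha}(\Lll)$ and $\lambda_\alpha = j'_{0\alpha}(\lambda)$ and $\mu_\alpha = \mu'_\alpha$, (iii) $N_\alpha$ is a tight extension of $\Lll_\alpha$ (with respect to $\mu_\alpha$), (iv) $\Ult_0(N_\alpha,\mu_\alpha)$ satisfies Los' theorem, and (v) $j_{\alpha\beta}\sub j'_{\alpha\beta}$ for $\beta\leq\alpha$, with all maps elementary. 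The base case $\alpha=0$ is the hypothesis. The whole point is that tightness plus Los' in $N$ forces the $\Lll$-ultrapower to agree with (the restriction of) the $N$-ultrapower, so wellfoundedness and Los' for the smaller ultrapower are inherited from the larger one.

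For the successor step, assume the induction hypotheses at $\alpha$. First note that since $N_\alpha$ is a tight extension of $\Lll_\alpha$, every function $f\in N_\alpha$ with $f:V_{\lambda_\alpha+1}\to\Lll_\alpha$ agrees $\mu_\alpha$-a.e.\ with some $g\in\Lll_\alpha$; this gives a natural map $\Ult_0(\Lll_\alpha,\mu_\alpha)\to\Ult_0(N_\alpha,\mu_\alpha)$ sending $[g]^{\Lll_\alpha}$ to $[g]^{N_\alpha}$, which is well defined and injective because $=_{\mu_\alpha}$ and $\in_{\mu_\alpha}$ are computed the same way in either model (membership in $\mu_\alpha$ being absolute), and which is cofinal in the sense that its range is all of $\{[f]^{N_\alpha} : f\in N_\alpha,\ f:V_{\lambda_\alpha+1}\to\Lll_\alpha\}$. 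Since $\Ult_0(N_\alpha,\mu_\alpha)$ satisfies Los' and, by Proposition \ref{prop:ultrapowers} applied inside $N_\alpha$, equals $\Lll_\alpha$ (more precisely, $N_\alpha$ internally verifies that its $\mu_\alpha$-ultrapower of $L(V_{\lambda_\alpha+1})$ is wellfounded and isomorphic to $L(V_{\lambda_\alpha+1})^{N_\alpha}=\Lll_\alpha$), the sub-structure $\Ult_0(\Lll_\alpha,\mu_\alpha)$ is wellfounded, and it satisfies Los' theorem because its truth values are inherited from those of the larger ultrapower (a formula holds of $[g]$'s in the $\Lll_\alpha$-ultrapower iff it holds of the same $[g]$'s in the $N_\alpha$-ultrapower, by tightness, since witnesses for existential quantifiers in the $N_\alpha$-ultrapower can be pulled back to $\Lll_\alpha$-functions up to $\mu_\alpha$-measure one). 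So $\Ult_0(\Lll_\alpha,\mu_\alpha)$ is wellfounded, satisfies Los', and one checks it is $=\Lll_\alpha$ exactly as in Proposition \ref{prop:ultrapowers} (the hull/collapse computation relativizes); hence $j_{\alpha,\alpha+1}$ is defined, and by construction $j_{\alpha,\alpha+1}\sub j'_{\alpha,\alpha+1}$. Then $\Lll_{\alpha+1}=\Lll_\alpha = j'_{0,\alpha+1}(\Lll)$; the identity $\mu_{\alpha+1}=\mu'_{\alpha+1}$ follows because both are defined by the same formula, $\mu_{\alpha+1}=\bigcup_{X\in\mu_\alpha^-}j_\alpha(X)$, and $\mu_\alpha^-$ computed in $\Lll_\alpha$ versus in $N_\alpha$ agree on the relevant unions by tightness (the $\pi$'s range over $\Lll_\alpha$ on the one side, but the extra $\pi\in N_\alpha$ on the other side are $\mu_\alpha$-a.e.\ equal to $\Lll_\alpha$-functions, and $j_\alpha\sub j'_\alpha$); equivalently, use that $\mu'_{\alpha+1}=\mu_j$-image under $j'_{0,\alpha+1}$ of the seed, computed internally to $N$. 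Finally one must re-verify that $N_{\alpha+1}=j'_{0,\alpha+1}(N)$ is a tight extension of $\Lll_{\alpha+1}$ and that $\Ult_0(N_{\alpha+1},\mu_{\alpha+1})$ satisfies Los': the former is the statement, internal to $N$, that $j'_{0,\alpha+1}(N)$ is a tight extension of $j'_{0,\alpha+1}(\Lll)$, which holds because "being a tight extension" is first-order expressible and $N\sats$"$N$ is a tight extension of $\Lll$" is part of the setup (tightness passes up iterates by elementarity of $j'_{0,\alpha+1}$ internally to $N$); the latter similarly follows since Los' for the $\mu_{\alpha+1}$-ultrapower of $N_{\alpha+1}$ is what $N$ internally proves about $j'_{0,\alpha+1}(N)$ and $\mu'_{\alpha+1}$, using that $N\sats\AC$ or at least that $N$ internally verifies Los' for its iteration ultrapowers --- here one invokes exactly the hypothesis that $\Ult_0(N,\mu_j)$ satisfies Los', pushed through $j'_{0,\alpha+1}$.

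For the limit step at $\eta$, form $\Lll_\eta=\dirlim_{\alpha<\eta}(\Lll_\alpha,j_{\alpha\beta})$ and $N_\eta=\dirlim_{\alpha<\eta}(N_\alpha,j'_{\alpha\beta})$ (the latter taken internally to $N$). Since $j_{\alpha\beta}\sub j'_{\alpha\beta}$ for all $\alpha\leq\beta<\eta$ by the induction hypothesis, the direct limit $\Lll_\eta$ embeds canonically into $N_\eta$ with the limit maps satisfying $j_{\alpha\eta}\sub j'_{\alpha\eta}$, and wellfoundedness of $\Lll_\eta$ follows from that of $N_\eta$, which $N$ internally verifies (again using that $N$ proves its own iteration is wellfounded, from Los' for $\Ult_0(N,\mu_j)$ pushed along the iteration). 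One checks $\Lll_\eta = j'_{0\eta}(\Lll)$ and $\lambda_\eta=j'_{0\eta}(\lambda)$ because $N_\eta=j'_{0\eta}(N)$ and $N_\eta$ internally computes $L(V_{\lambda_\eta+1})^{N_\eta}$ as the direct limit of the $\Lll_\alpha$'s; tightness of $N_\eta$ over $\Lll_\eta$ and Los' for $\Ult_0(N_\eta,\mu_\eta)$ are the $j'_{0\eta}$-images of the corresponding facts at $0$, via internal elementarity; and $\mu_\eta=\mu'_\eta$ as before. Then define $j_\eta$ on $\Lll_\eta$ exactly as in the successor case, using that $\Ult_0(N_\eta,\mu_\eta)$ satisfies Los', and the induction continues.

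The main obstacle is the interface between the "external" iteration of $\Lll$ (where we have no choice and Los' is not automatic) and the "internal" iteration of $N$ (where $N$ itself certifies wellfoundedness and Los' for all its finite and transfinite iterates). The key leverage is tightness: it guarantees that the external $\Lll_\alpha$-ultrapower is literally a definable sub-ultrapower of the internal $N_\alpha$-ultrapower --- same underlying measure, same equivalence and membership relations, range of $\Lll_\alpha$-represented points closed under "pulling back witnesses" up to $\mu_\alpha$-measure one --- so that wellfoundedness and the full strength of Los' for the smaller ultrapower are simply inherited. Once that is set up cleanly at stage $\alpha$, propagating it to $\alpha+1$ and to limits is bookkeeping with the inductive hypotheses; the only subtlety to state carefully is why tightness and "Los' for $\Ult_0(N_\alpha,\mu_\alpha)$" are preserved along the iteration, and for that one simply notes these are first-order properties of the pair $(N_\alpha,\Lll_\alpha,\mu_\alpha)$ preserved by the internal iteration maps $j'_{0\alpha}$, so they hold everywhere because they hold at $0$.
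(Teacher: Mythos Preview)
Your approach is essentially the same as the paper's: both use tightness to identify the $\Lll$-side ultrapower with a substructure of the $N$-side ultrapower, propagate the relevant first-order facts (tightness, Los') along the internal iteration of $N$ by elementarity of $j'_{0\alpha}$, and appeal to the usual linear iterability argument for wellfoundedness of the $N_\alpha$. The paper is just much terser about all of this.

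There is, however, one genuine gap in your write-up. You never establish that $\Ult_0(N,\mu_j)$ itself is wellfounded in $V$; your induction hypothesis (i) asserts it, but at the successor step you only argue that $\Ult_0(\Lll_\alpha,\mu_\alpha)$ is wellfounded (via the embedding into the $N_\alpha$-ultrapower and the internal computation in $N_\alpha$), and you then silently use $j'_{0,\alpha+1}$ and $N_{\alpha+1}$ as if $\Ult_0(N_\alpha,\mu_\alpha)$ were known to be wellfounded. At the limit step you write that ``$N$ proves its own iteration is wellfounded, from Los' for $\Ult_0(N,\mu_j)$ pushed along the iteration'', but Los' theorem only gives elementarity of the ultrapower map, not wellfoundedness of the target; and the standard linear-iterability argument needs the first ultrapower to be genuinely wellfounded before it applies. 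The paper fills this gap in one line: since $N$ is a tight extension, any $f\in N$ with $f:V_{\lambda+1}\to N$ has $\rank\circ f:V_{\lambda+1}\to\OR\subseteq\Lll$, hence is $\mu_j$-a.e.\ equal to a function in $\Lll$; so the ordinals of $\Ult_0(N,\mu_j)$ inject into those of $\Ult_0(\Lll,\mu_j)=\Lll$, which is wellfounded by Proposition~\ref{prop:ultrapowers}. Once you insert this observation at stage $0$, the usual linear-iterability argument (run in $V$, not ``proved by $N$'') gives wellfoundedness of every $N_\alpha$, and the rest of your induction goes through as written.

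A minor stylistic point: your sentence ``by Proposition~\ref{prop:ultrapowers} applied inside $N_\alpha$, equals $\Lll_\alpha$'' reads as if $\Ult_0(N_\alpha,\mu_\alpha)=\Lll_\alpha$, which is false; your parenthetical clarifies you mean the internal $\Lll_\alpha$-ultrapower, but you should rephrase the main clause to avoid the misreading.
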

\begin{proof}
 We already know that $\Ult_0(\Lll,\mu_j)$ is wellfounded
 and isomorphic to $\Lll$, by \ref{prop:ultrapowers}.
By \L o\'{s}'s Theorem, the ultrapower $\Ult_0(N,\mu_j)$ is extensional and the 
ultrapower map $i^{N}_{\mu_j}$ is elementary. 
 Because $N$ is a tight extension, note that $\Ult_0(N,\mu_j)$
 is wellfounded and
\[ 
L(V_{\lambda+1})^{\Ult_0(N,\mu_j)}=i^N_{\mu_j}(\Lll)=\Ult_0(\Lll,\mu_j)=\Lll\] 
and $i^N_{\mu_j}\rest\Lll=i^{\Lll}_{\mu_j}$.
It is easy enough to see that also $\mu_1=\mu^+_1$.
By this and  elementarity all of the assumed facts hold also for 
$\Lll_1,j_1,\mu_1=\mu^+_1,N_1$, and note that in this way we also
get \L o\'{s}'s Theorem and extensionality for $\Ult_0(\Lll_1,\mu_1)$.
Therefore we can iterate the process,
and the same facts and relationships hold for the iterates, assuming 
wellfoundedness.
But then the usual proof of linear iterability shows that all iterates of 
$(N,\mu_j)$ are wellfounded, and hence so are the $\Lll_\alpha$.
\end{proof}

\section{The main argument}\label{sec:main}

In this section we will prove the main result,
Theorem \ref{tm:main}.
We first establish the following lemma;
the methods involved in its proof will be extended
to prove \ref{tm:main}:

\begin{lem}\label{lem:omega^2-it}
 Let $(\Lll,\lambda,j)$ be a relevant triple
 with $j$ proper. Then $(\Lll,j)$ is $\om^2$-iterable \tu{(}in the sense of Definition \ref{dfn:iterates}\tu{)}.
 \end{lem}

\begin{proof}
The proof will heavily rely on some kinds of calculations
in \cite{woodin_sem2}. Fix a relevant triple $(\Lll,\lambda,j)$
with $j$ proper. Fix a proper class 
$\Gamma\sub\OR\cut(\lambda+1)$
such that $j\rest\Gamma=\id$, taking $\Gamma=\mathscr{I}^{\Lll}$ if 
$(V_{\lambda+1}^{\Lll})^\#$ exists. Let $\vec{\Gamma}=[\Gamma]^{<\om}\cut\{\emptyset\}$.
For $s\in\vec{\Gamma}$ let $s^-=s\cut\{\max(s)\}$, and let
\[ H^s=\Hull^{\Lll|\max(s)}(V_{\lambda+1}^{\Lll}\cup\{s^-\}).\]
Note this is the uncollapsed hull and $H^s\elem\Lll|\max(s)$,
because
\[ \Lll|\max(s)=\Hull^{\Lll|\max(s)}(V_{\lambda+1}^{\Lll}\cup\max(s)).\]
Given any  $k\in\mathscr{E}(\Lll)$ with $k\rest V_\lambda^{\Lll}\in\Lll$, $\crit(k)<\lambda$ and $k\rest\Gamma=\id$,
let
\[ k^s=k\rest H^s:H^s\to H^s. \]
Note that $k^s$ is the unique $\pi\in\mathscr{E}(H^s)$
with $k\rest V_{\lambda}^{\Lll}\sub\pi$ and $\pi(s^-)=s^-$; moreover, $k^s\in\Lll$.
For letting $\pi$ be such, we have $\pi(\lambda)=\lambda$,
since otherwise $\lambda$ is a limit
and $H^s\sats$``there is no $\xi<\pi(\lambda)$
such that $V=L(V_{\xi+1})$'',
contradicting the fact that $V_{\lambda+1}^{\Lll}\sub\pi(V_\lambda^{\Lll})$.
 So in fact, $\pi\rest V_{\lambda+1}^{\Lll}=k\rest V_{\lambda+1}^{\Lll}$ is the canonical
extension of $k\rest V_{\lambda}^{\Lll}$.
But for each $x\in H^s$ there is a term $t$ and $z\in V_{\lambda+1}^{\Lll
}$ such that $H^s\sats\text{``}x=t(z,s^-)\text{''}$,
so $H^s\sats\text{``}\pi(x)=t(k(z),s^-)\text{''}$. This shows $\pi=k^s\in\Lll$.

\begin{clm}\label{clm:mu_j_rep}
 Let $\mu_0=\mu_j$ and $\mu_1$ be as in Definition \ref{dfn:iterates},
 and $j_{01}=j=i^{\Lll}_{\mu_0}$ and $j_{12}=i^{\Lll}_{\mu_1}$
 \tu{(}the ultrapower maps; we haven't shown yet 
that $\Ult(\Lll,\mu_1)$ is wellfounded etc\tu{)}.\footnote{
Notationally $j_{mn}:\Lll_m\to\Lll_n$.}
 Then:
 \begin{enumerate}[label=\arabic*.,ref=\arabic*]
  \item\label{item:j_01_rest_alpha_in} $j_{01}\rest(\Lll|\alpha)\in\Lll$ for 
each $\alpha<\Theta=\Theta_{V_{\lambda+1}^{\Lll}}^{\Lll}$,
 \end{enumerate}
 and letting $k=\bigcup_{\alpha<\Theta}j_{01}(j_{01}\rest(\Lll|\alpha))$, we 
have
 \begin{enumerate}[label=\arabic*.,ref=\arabic*]
 \setcounter{enumi}{1}
  \item$k:\Lll|\Theta\to\Lll|\Theta$ is elementary,\label{item:k_elem} 
  \item\label{item:mu_1_derived_from_k} $\mu_1$ is the measure derived from $k$
  with seed $k\rest V_\lambda^{\Lll}$,
  \item\label{item:Ult_0(Lll_Theta,mu_1)} 
$\Ult_0(\Lll|\Theta,\mu_1)=\Lll|\Theta$  and $k$ is 
the ultrapower map \tu{(}so $k\rest 
V_\lambda^{\Lll}=i^{\Lll}_{\mu_1}\rest V_\lambda^{\Lll}$\tu{)},
\item\label{item:j_12(Lll_Theta)} $\Lll|\Theta=j_{12}(\Lll|\Theta)$ and 
$k=j_{12}\rest(\Lll|\Theta)$.
 \end{enumerate}
Let $\Gamma$, etc, be as above for $j=j_{01}$.
Then:
   \begin{enumerate}[label=\arabic*.,ref=\arabic*]
   \setcounter{enumi}{5}
  \item  
   \L o\'{s}'s Theorem holds for $\Ult_0(\Lll,\mu_1)$,
and the ultrapower is extensional 
and isomorphic to $\Lll$,
\label{item:Ult_mu_1=Lll}
\item\label{item:ult_map_is_pw_image} for each $s\in\vec{\Gamma}$, we have $j_{01}\rest H^s\in\Lll$ and
$j_{12}\rest H^s=j(j_{01}\rest H^s)$, and moreover,
$j_{12}=\bigcup_{t\in\vec{\Gamma}}j_{01}(j_{01}\rest 
H^t)$,
\item\label{item:still_fixes_Gamma} $j_{12}\rest\Gamma=\id$.
 \end{enumerate}
\end{clm}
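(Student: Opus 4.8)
The plan is to work through the items in the order they are listed, since each naturally feeds the next. For item \ref{item:j_01_rest_alpha_in}: fix $\alpha<\Theta$. Since $\Theta=\Theta_{V_{\lambda+1}}^{\Lll}$, there is a surjection $\pi:V_{\lambda+1}\to\Lll|\alpha$ with $\pi\in\Lll$. Then $j_{01}\rest(\Lll|\alpha)$ is computed from $j_{01}\rest V_{\lambda+1}$ (which lies in $\Lll$ by Remark \ref{rem:standard_extension}, being the canonical extension of $j\rest V_\lambda$) together with $\pi$ and $j_{01}(\pi)$: namely $j_{01}(\pi(x))=j_{01}(\pi)(j_{01}\rest V_{\lambda+1}(x))$ for $x\in V_{\lambda+1}$, and all three of $\pi$, $j_{01}(\pi)$, $j_{01}\rest V_{\lambda+1}$ are in $\Lll$, so $j_{01}\rest(\Lll|\alpha)\in\Lll$. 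Since $\Lll\sats\ZF$, the union $k=\bigcup_{\alpha<\Theta}j_{01}(j_{01}\rest(\Lll|\alpha))$ makes sense as a class of $\Lll$ (this is essentially the canonical-extension construction one level up, $k=(j_{01}\rest\Lll|\Theta)^+$ in spirit), giving a map $k:\Lll|\Theta\to\Lll|\Theta$; for item \ref{item:k_elem}, elementarity follows because $k$ agrees with $j_{01}$ on each $\Lll|\alpha$ up to the image, and $j_{01}$ is fully elementary — more precisely, for any $\varphi$ and parameters in some $\Lll|\alpha$, $\Lll|\Theta\sats\varphi(\vec a)$ reflects to $\Lll|\bar\alpha\sats\varphi(\vec a)$ for suitable $\bar\alpha<\Theta$ by a Löwenheim–Skolem/condensation argument internal to $\Lll$, and then $j_{01}$ moves this correctly.

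For items \ref{item:mu_1_derived_from_k} and \ref{item:Ult_0(Lll_Theta,mu_1)}: unwind the definition of $\mu_1=\bigcup_{X\in\mu_0^-}j_0(X)$ from Definition \ref{dfn:iterates}. Each $X\in\mu_0^-$ has the form $\rg(\pi)\inter\mu_0$ for some $\pi:V_{\lambda+1}\to V_{\lambda+2}^\Lll$ in $\Lll$, and weak amenability (Proposition \ref{prop:ultrapowers}(1)) gives $X\in\Lll$, so $j_0(X)$ is defined; membership $A\in\mu_1$ unwinds to $k\rest V_\lambda\in k(A)$ by tracing through $j_0$ and the agreement $k\rest V_\lambda=j_0\rest V_\lambda=j_{01}\rest V_\lambda$ together with the fact that $k$ is $j_{01}$ pushed up one level — this is exactly the assertion that $\mu_1$ is the measure derived from $k$ with seed $k\rest V_\lambda$. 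Then $\Ult_0(\Lll|\Theta,\mu_1)$ is computed just as in the proof of Proposition \ref{prop:ultrapowers}: it is isomorphic to $\Hull^{\Lll|\Theta}(\rg(k)\cup\{k\rest V_\lambda\})$, and since $V_{\lambda+1}\sub$ this hull and $\Lll|\Theta\sats$``$V=\HOD(V_{\lambda+1})$'' (in the relevant sense, as $\Theta$ is the sup of surjective images of $V_{\lambda+1}$), the hull is all of $\Lll|\Theta$ and there is no collapse, so $\Ult_0(\Lll|\Theta,\mu_1)=\Lll|\Theta$ with $k$ the ultrapower map; in particular $k\rest V_\lambda=i^\Lll_{\mu_1}\rest V_\lambda$.

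For item \ref{item:j_12(Lll_Theta)}, note $j_{12}=i^\Lll_{\mu_1}$ restricted to $\Lll|\Theta$ factors through $\Ult_0(\Lll|\Theta,\mu_1)=\Lll|\Theta$ — i.e. $j_{12}$ maps $\Lll|\Theta$ into $\Lll|\Theta$ and, by uniqueness of ultrapower maps by the same measure using the same functions, agrees with $k$ there; that $\Lll|\Theta=j_{12}(\Lll|\Theta)$ follows since $\Theta$ is definable over $\Lll$ and fixed (being the sup of surjective images of $V_{\lambda+1}$, which $j_{12}$ respects as $j_{12}\rest V_{\lambda+1}$ is a surjection-preserving bijection-in-the-limit). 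Item \ref{item:Ult_mu_1=Lll}: by items \ref{item:j_01_rest_alpha_in}–\ref{item:j_12(Lll_Theta)} we have $\Lll=\Hull^{\Lll}(\rg(k)\cup V_{\lambda+1}\cup\Gamma)$ is exhausted, and running the Proposition \ref{prop:ultrapowers} argument with $\mu_1$ in place of $\mu_0$ — using that every element of $\Lll$ is definable from $V_{\lambda+1}\cup\Gamma$ and that $k$ carries the theory correctly — yields Łoś, extensionality, and $\Ult_0(\Lll,\mu_1)\iso\Lll$; the one point needing the tight-extension/definability setup (and the only place I expect real friction) is checking that every element of $\Lll$, not merely of $\Lll|\Theta$, is represented, i.e. that functions $V_{\lambda+1}\to\Lll$ in $\Lll$ suffice — this follows because $\Lll=\bigcup_\alpha\Lll|\alpha$ and each $\Lll|\alpha$ for $\alpha\geq\Theta$ is a surjective image of $V_{\lambda+1}\cup\{$parameters$\}$ with the parameters absorbed via $\Gamma$. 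Finally items \ref{item:ult_map_is_pw_image} and \ref{item:still_fixes_Gamma}: for $s\in\vec\Gamma$, $j_{01}\rest H^s\in\Lll$ because $H^s\elem\Lll|\max(s)\sub\Lll|\Theta'$ for large enough $\Theta'$ and one applies item \ref{item:j_01_rest_alpha_in}-style reasoning (or directly: $j_{01}\rest H^s$ is the unique elementary $\pi:H^s\to H^s$ with $j\rest V_\lambda\sub\pi$ and $\pi(s^-)=s^-$, as already observed before the claim, hence definable in $\Lll$); then $j_{12}\rest H^s=k\rest H^s$ agrees with the ultrapower map, and by the standard "ultrapower of a restriction" computation $j_{12}\rest H^s=[c_{j_{01}\rest H^s}]$-style argument gives $j_{12}\rest H^s=j_{01}(j_{01}\rest H^s)$; summing over $s\in\vec\Gamma$ and using $\Lll=\Hull^\Lll(V_{\lambda+1}\cup\Gamma)$ gives $j_{12}=\bigcup_{s}j_{01}(j_{01}\rest H^s)$. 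Since $j_{01}\rest\Gamma=\id$, each $j_{01}(j_{01}\rest H^s)$ fixes the ordinals in $\Gamma$ below $\max(s)$ pointwise, wait — more carefully: $j_{12}\rest\Gamma=\id$ because $j_{12}$ is an ultrapower map by a measure with critical point $<\lambda$ and $\Gamma$ consists of indiscernibles/fixed points above $\lambda$; formally, if $\gamma\in\Gamma$ then $\gamma$ is fixed by $j_{01}$ and hence by $k=j_{12}\rest\Lll|\Theta$ if $\gamma<\Theta$, and for $\gamma\geq\Theta$ one uses that $j_{12}$ is continuous at such $\gamma$ and $j_{12}\rest H^s$ fixes $s$, so $j_{12}(\gamma)=\gamma$.
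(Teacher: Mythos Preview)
Your overall architecture is right, and several parts match the paper's proof closely (part~\ref{item:j_01_rest_alpha_in} is essentially identical). But there is a genuine confusion running through parts~\ref{item:k_elem}--\ref{item:mu_1_derived_from_k} that would derail the argument as written.

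You assert ``the agreement $k\rest V_\lambda=j_0\rest V_\lambda=j_{01}\rest V_\lambda$''. This is false. By definition $k=\bigcup_\alpha j_{01}(j_{01}\rest\Lll|\alpha)$, so $k\rest V_\lambda=j_{01}(j_{01}\rest V_\lambda)$, which is \emph{not} $j_{01}\rest V_\lambda$ (indeed $\crit(k)=j_{01}(\crit(j_{01}))>\crit(j_{01})$). You even say correctly that ``$k$ is $j_{01}$ pushed up one level'', but then contradict this. The same confusion surfaces in your elementarity argument for $k$: you write that ``$k$ agrees with $j_{01}$ on each $\Lll|\alpha$'', but it does not---$k$ is the \emph{image under $j_{01}$} of $j_{01}\rest\Lll|\alpha$, a different map. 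The paper's route for part~\ref{item:k_elem} is cleaner: $k$ is a union of $\Sigma_0$-elementary pieces and is $\in$-cofinal, hence $\Sigma_1$-elementary; then since $\Lll|\Theta\sats\ZF^-$, full elementarity follows. Your L\"owenheim--Skolem sketch does not obviously recover this.

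A second issue: in part~\ref{item:ult_map_is_pw_image} you write ``$j_{12}\rest H^s=k\rest H^s$'', but $k$ is only defined on $\Lll|\Theta$, whereas $H^s\sub\Lll|\max(s)$ with $\max(s)\in\Gamma$ typically $\geq\Theta$ (e.g.\ if $\Gamma=\mathscr{I}^\Lll$). The paper instead builds $j'=\bigcup_{s\in\vec\Gamma}j_{01}((j_{01})^s)$ directly: by elementarity and $j_{01}(s)=s$, each $j_{01}((j_{01})^s)$ is the unique elementary $H^s\to H^s$ fixing $s^-$ with $V_\lambda$-part equal to $j_{01}(j_{01}\rest V_\lambda)=k\rest V_\lambda$; the union $j'$ is then cofinal $\Sigma_0$-elementary $\Lll\to\Lll$, hence fully elementary, and one checks $\Ult_0(\Lll,\mu_1)$ satisfies \L o\'s with ultrapower map $j'=j_{12}$. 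This simultaneously handles parts~\ref{item:Ult_mu_1=Lll}--\ref{item:still_fixes_Gamma} (the last because each $j_{01}((j_{01})^s)$ fixes $s^-$), and avoids the detour through an independent \L o\'s argument for $\mu_1$ that your proposal gestures at but does not complete.
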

\begin{proof}
Part \ref{item:j_01_rest_alpha_in}: Because $\alpha<\Theta$ there is a 
surjection $\pi:V_{\lambda+1}^{\Lll}\to\Lll|\alpha$ with $\pi\in\Lll$.
But then note that
$j\com\pi=j(\pi)\com j$,
which suffices as $j(\pi)\in\Lll$ and $j\rest 
V_{\lambda+1}^{\Lll}\in\Lll$
(the latter is the canonical extension of $j\rest V_\lambda^{\Lll}$).

So let $k=\bigcup_{\alpha<\Theta}j(j\rest(\Lll|\alpha))$.

Part  \ref{item:k_elem}:
We get $\Sigma_0$-elementarity immediately,
hence $\Sigma_1$-elementarity by $\in$-cofinality.
But then because $L_\Theta(V_{\lambda+1}^{\Lll})\sats\ZF^-$,
this implies full elementarity.

Part \ref{item:mu_1_derived_from_k}: Recall that $V_{\lambda+2}^{\Lll}\sub\Lll|\Theta$, by Proposition \ref{prop:Theta_properties}. Let $\alpha\in(\lambda,\Theta)$. Let  
$\bar{\mu}=\mu_0\cap(\Lll|\alpha)$ and $\bar{j}=j\rest(\Lll|\alpha)$.
Then $\bar{\mu},\bar{j}\in\Lll$
and $\bar{j}\rest V_\lambda^{\Lll}=j\rest V_\lambda^{\Lll}$.
Note $\Lll|\Theta\sats$``$\bar{\mu}$ is the measure derived from $\bar{j}$
with seed $\bar{j}\rest V_\lambda^{\Lll}$''.
So $\Lll|\Theta\sats$``$j(\bar{\mu})$
is the measure derived from $j(\bar{j})$
with seed $j(\bar{j}\rest V_\lambda^{\Lll})$''.
But note $j(\bar{\mu})=\mu_1\cap j(\Lll|\alpha)$
and $j(\bar{j})\sub k$
and $j(\bar{j}\rest V_\lambda^{\Lll})=k\rest V_\lambda^{\Lll}$.
Since this holds for all $\alpha\in(\lambda,\Theta)$, we are done.

 Part \ref{item:Ult_0(Lll_Theta,mu_1)}:
like the proper class case (see the proof of Proposition \ref{prop:ultrapowers}), but simpler.

Part \ref{item:j_12(Lll_Theta)}: Let $f\in\Lll$
with $f:V_{\lambda+1}^{\Lll}\to\Lll|\Theta$.
Then $\rg(f)\sub\Lll|\alpha$ for some $\alpha<\Theta$,
so easily $f\in\Lll|\Theta$, which by the previous part suffices.

Parts \ref{item:Ult_mu_1=Lll}, \ref{item:ult_map_is_pw_image}:
For $s\in\vec{\Gamma}$, write $\ell^s=(j_{01})^s\in\Lll$.
By 
elementarity of 
$j_{01}$ and since $j_{01}(s)=s$, the map
$j_{01}(\ell^s):H^s\to H^s$
is the unique $\ell'\in\mathscr{E}(H^s)$
such that $\ell'(s^-)=s^-$ and $\ell'\rest V_\lambda^{\Lll}=j_{01}(j_{01}\rest 
V_\lambda^{\Lll})=k\rest V_\lambda^{\Lll}$.
 By Proposition \ref{prop:extensions},
  $\Lll=\Hull^{\Lll}(V_{\lambda+1}^{\Lll}\cup\Gamma)$.
It follows that setting \[ 
j'=\bigcup_{s\in\vec{\Gamma}}j_{01}(\ell^s), \]
 $j':\Lll\to\Lll$ is cofinal $\Sigma_0$-elementary,
hence (by ZF) fully elementary. Moreover, $j'\rest(\Lll|\Theta)=k$.
But then again as in the proof of Proposition \ref{prop:ultrapowers}, $\Ult_0(\Lll,\mu_1)$
satisfies \L o\'{s}'s Theorem, is extensional, isomorphic to $\Lll$, 
and its ultrapower map $i$ factors through $j'$, and
in fact $i=j'$, so we are done.

Part \ref{item:still_fixes_Gamma}: This follows easily from the characterization
of $j_{12}$ we just gave.
\end{proof}

We have $j=j_0=i^{\Lll}_{\mu_0}:\Lll\to\Lll$ 
and $j_1=i^{\Lll}_{\mu_1}:\Lll\to\Lll$  (and the 
ultrapowers via $\mu_0,\mu_1$ each satisfy \L o\'{s}'s Theorem,
are extensional and isomorphic to $\Lll$) by Claim \ref{clm:mu_j_rep}.
Fix $\Gamma$, etc, as above.
So $j_0\rest\Gamma=\id$, $j^s:H^s\to H^s$
for $s\in\vec{\Gamma}$,
\[ j_0=\bigcup_{s\in\vec{\Gamma}}(j_0)^s\text{ and } 
j_1=\bigcup_{s\in\vec{\Gamma}}j_0((j_0)^s),\]
and $j_1\rest\Gamma=\id$.
But then we have the same circumstances with $(\Lll,j_1,\mu_1,\Gamma)$
as with $(\Lll,j_0,\mu_0,\Gamma)$, so we can iterate the process through all 
$n<\om$.
So $(\Lll,\mu_0)$ is $\om$-iterable, and the ultrapower map given by $\mu_n$
coincides with the embedding $j_n$ given by iterating the process above.

Write $\left<\Lll_m\right>_{m\leq\om}$
for the first $(\om+1)$ iterates (so $\Lll_m=\Lll$ for $m<\om$), and 
$j_{mn}:\Lll_m\to\Lll_n$,
for $0\leq m\leq n\leq\om$, the iteration maps. (So $j_n=j_{n,n+1}$, and $\Lll_\om$ is the direct limit $\dirlim_{m\leq n<\om}(\Lll_m,\Lll_n;j_{mn})$.)
We next show  that $\Lll_\om$ is wellfounded.\footnote{At this point,
Woodin's argument in \cite{woodin_sem2} seems to use some $\DC$.
We avoid this by arguing more like in the standard proof of linear iterability,
adapted to the ``covering system'' setup.}

For $n,\ell<\om$ and  $\alpha\in\OR$, say $\alpha$ is \emph{$(j_n,\ell)$-stable}
iff $j_{n+\ell, m}(\alpha)=\alpha$ for all $m\in[n+\ell,\om)$.
Likewise for $s\in[\OR]^{<\om}$ (where we require $j_{n+\ell,m}(s)=s$).

Let $\vec{\OR'}=[\OR\cut (\lambda+1)]^{<\om}\cut\{\emptyset\}$.
 Given  $k\in\mathscr{E}_{\nt}(V_\lambda^{\Lll})$ with $k\in\Lll$,
 define the  \emph{finite iterates} of $k$, if possible, as follows: 
 Start with $k_0=k$. Given $k_n$, if $(k_n)^+$ (the canonical
 extension) is such that $(k_n)^+\in\mathscr{E}(V_{\lambda+1}^{\Lll})$
 then let 
$k_{n+1}=(k_n)^+(k_n)$; otherwise $k_{n+1}$ is not defined.
If we let $\bar{j}=j_0\rest V_\lambda^{\Lll}$, then by the preceding observations, each $\bar{j}_n$ exists.
Now given
$s\in\vec{\OR'}$,
 write $\pi^{k}_{sm,m+1}$
 for the unique elementary map $\pi:H^s\to H^s$
 such that $\pi\rest V_\lambda^{\Lll}=k_m$ (the $m$th iterate of $k$) and 
$\pi(s^-)=s^-$,
 if such exists; otherwise $\pi^{k}_{sm,m+1}$ is not defined.

\begin{clm}\label{clm:every_ord_ev_stable}For each $\alpha\in\OR$ there is 
$\ell<\om$ such that 
$\alpha$ is $(j_0,\ell)$-stable.
 \end{clm}
\begin{proof} Suppose otherwise and let $\alpha$ be the least counterexample. 
 Let $s\in\vec{\Gamma}$  with $\alpha\in H^s$. Let $k=j_0\rest V_\lambda^{\Lll}$.
 Then  $\Lll\sats$ ``$\alpha$ is the least 
$\beta\in\OR\cap H^s$ such 
that 
 $\pi^{k}_{sm,m+1}(\beta)>\beta$
 for cofinally many $m<\om$''.
 Applying $j_{0n}$ to this statement with some
 $n$ such that $\alpha'=j_{0n}(\alpha)>\alpha$,
 and letting $k'=j_{0n}(k)=j_{n}\rest V_\lambda^{\Lll}$ (and recalling $j_{0n}(s)=s$), we get
 $\Lll_n=\Lll\sats$ ``$\alpha'$ is the least $\beta\in\OR\cap H^s$
 such that 
$\pi^{k'}_{s,m,m+1}(\beta)>\beta$
for cofinally many $m<\om$''.
 But since $k'=k_n$, clearly $\alpha'=\alpha$, a contradiction.\end{proof}

Using the hulls and maps defined above,
$\Lll$ can form a covering system for the direct limit,
just like the standard covering systems considered
in inner model  theory and/or in \cite{woodin_sem2},
with some direct 
limit $\widetilde{\Lll_\om}$, which we will eventually show $=\Lll_\om$.
This proceeds as follows. Given any  $k\in\mathscr{E}_{\nt}(V_\lambda^{\Lll})$  with $k\in\Lll$
and given $s\in\vec{\OR'}$, say that $s$ 
is \emph{$k$-potentially-stable} iff 
 $\pi^{k}_{sm,m+1}$ exists for all $m<\om$ (hence  $\pi^{k}_{sm,m+1}\in\mathscr{E}(H^s)$, etc).
Similarly, for $\ell<\om$, we say that $s$ is 
\emph{$(k,\ell)$-potentially-stable}
iff this holds for all $m\geq\ell$.
Also let $\mathscr{P}^k$
denote the class of pairs $(s,\ell)$
such that $s$ is $(k,\ell)$-potentially-stable.
If $(s,\ell)\in\mathscr{P}^k$ and $\ell\leq m\leq n$ let 
$\pi^{k}_{smn}:H^s\to H^s$
be defined naturally by composition. That is,
$\pi^{k}_{smm}=\id$,
 $\pi^{k}_{sm,m+1}$ is already defined,
and $\pi^{k}_{sm,n+1}=\pi^{k}_{sn,n+1}\com\pi^{k}_{smn}$.
Say that $k$ is \emph{valid} if for every $s\in\vec{\OR'}$
there is $n<\om$ such that $(s,n)\in\mathscr{P}^k$.

Note that if $\alpha\in\OR\cut(\lambda+1)$ is 
$(j_0,\ell)$-stable then
$\{\alpha\}$ is $(j_0\rest V_\lambda^{\Lll},\ell)$-potentially-stable,
or equivalently, $(\{\alpha\},\ell)\in\mathscr{P}^{j_0\rest V_\lambda^{\Lll}}$.
So by Claim \ref{clm:every_ord_ev_stable}, for every $s\in\vec{\OR'}$
there is $\ell<\om$ such that $(s,\ell)\in\mathscr{P}^{j_0\rest V_\lambda^{\Lll}}$;
that is,
 $j_0\rest V_\lambda^{\Lll}$ is valid. 

Let $(s,\ell),(t,m)\in\mathscr{P}^k$.
We write $(s,\ell)\leq(t,m)$
iff $s\sub t$ and $\ell\leq m$. Supposing $(s,\ell)\leq(t,m)$, define
\[ \pi^{k}_{s\ell,t m}:H^s\to H^t
\]
to have the same graph as has $\pi^{k}_{smn}$;
so $\pi^{k}_{s\ell,tm}$ is  $\Sigma_0$-elementary.

Assume now that $k$ is valid.
Note then that $\mathscr{P}^k$ is directed under $\leq$ and the embeddings 
$\pi^{k}_{s\ell,tm}$ commute; that is,
if $(s,\ell)\leq(t,m)\leq(u,n)$ then
\[ \pi^{k}_{s\ell,u n}=\pi^{k}_{tm,un}\com\pi^{k}_{s\ell,t m}.\]

So we define the directed system 
\begin{equation}\label{eqn:cov_sys}\mathscr{D}^k=\left<H^s,H^t;\pi^{k}_{sm,tn}\bigm|
(s,m)\leq(t,n)\in\mathscr{P}^k\right>.\end{equation}
Let $\widetilde{\Lll}^k_{\om}$ be its direct limit,
and
$\pi^{k}_{sm,\om}:H^s\to\widetilde{\Lll}^k_\om$
the direct limit map. Note that $\mathscr{P}^k$, validity,
$\mathscr{D}^k$, $\widetilde{\Lll}^k_\om$
and $\left<\pi^{k}_{sm\om}\right>_{(s,m)\in\mathscr{P}^k}$ are all definable over $\Lll$ from the parameter $k$, and uniformly so in $k$.

Now if $(s,\ell)\in\mathscr{P}^{j_n\rest V_\lambda^{\Lll}}$  and $s$ is 
$(j_n,\ell)$-stable and $\ell\leq m<\om$
then
\[ j_{n+\ell,n+m}\rest H^s=\pi^{j_n\rest V_\lambda^{\Lll},s}_{\ell,m}.\]
So working in $V$, we can define
$\sigma_n:\widetilde{\Lll}^{j_n\rest V_\lambda^{\Lll}}_\om\to \Lll_\om$
by setting
\[ \sigma_n(\pi^{j_n\rest V_\lambda^{\Lll}}_{s\ell,\om}(x))=j_{n+\ell,\om}(x)\]
whenever $(s,\ell)\in\mathscr{P}^{j_n\rest V_\lambda^{\Lll}}$, $x\in H^s$ and $s$ is $(j_n,\ell)$-stable. Clearly $\widetilde{\Lll}^{j_n\rest V_\lambda^{\Lll}}_\om$ and 
$\sigma_n$
are independent of $n$, so just write $\widetilde{\Lll}_\om,\sigma$
(but it seems these might depend on $j$).

Now $\widetilde{\Lll}_\om=\Lll_\om$
and $\sigma=\id$, because for each $x\in \Lll$
there is $s\in\vec{\Gamma}$ such that $x\in H^s$,
so for any $n<\om$,  noting that $s$ is $(j_0\rest V_\lambda^{\Lll},n)$-stable,
we then get $j_{n\om}(x)=\sigma(\pi^{j_0\rest V_\lambda^{\Lll}}_{sn,\om}(x))$,
so $j_{n\om}(x)\in\rg(\sigma)$, as desired.

\begin{clm} $\Lll_\om$ is wellfounded. \end{clm}
\begin{proof}Suppose not.
Then there is a least $\alpha\in\OR$ such that $j_{0\om}(\alpha)$
is in the illfounded part of $\Lll_\om$. Let $s\in\vec{\Gamma}$ with $\alpha\in 
H^s$. 
Using the covering system above, we have a definition of $\Lll_\om$
over $\Lll$ from the parameter $k=j_0\rest V_\lambda^{\Lll}$,
and the illfounded part can be computed by $\Lll$.
So $\Lll\sats$ ``$(s,0)\in\mathscr{P}^{k}$ and $\alpha$ is the 
least $\beta\in\OR\cap H^s$
such that $\pi^{k}_{s0,\om}(\beta)$ is in the illfounded part 
of $\Lll_\om$''.
Now let $m<\om$ be such that there is $\alpha_1<j_{0m}(\alpha)$
and $j_{m\om}(\alpha_1)$ is in the illfounded part.
Let  $t\in\vec{\Gamma}$ with $s\sub t$
and  $\alpha_1\in H^t$.
Note that we may in fact assume that $s=t$, by enlarging $s$
in the first place, because this does not affect the minimality of $\alpha$ 
there. Therefore applying $j_{0m}$ to the first statement
and letting $(\alpha',k')=j_{0m}(\alpha,k)$,
we have $\Lll\sats$``$\alpha'$ is the least $\beta\in\OR\cap H^s$
such that $\pi^{k'}_{s0,\om}(\beta)$ is in the illfounded part 
of $\Lll_\om$'' (noting here that $\Lll_\om$ is defined in the same manner
from $k$ or $k'=(k)_m$).
But this contradicts the choice of $\alpha_1<\alpha'$ and $m$ and $t=s$.
\end{proof}

We now define the usual $*$-map.
That is, for $\alpha\in\OR$ let
$\alpha^*=\pi^{j_0\rest V_\lambda^{\Lll}}_{sm,\om}(\alpha)$
whenever $\alpha\in s^-$ and $m<\om$ is large enough. It is straightforward to verify:

\begin{clm}\label{clm:*_definability} We have:
\begin{enumerate}[label=\arabic*.,ref=\arabic*]\item\label{item:*_definability} The map $\OR\to\OR$ given by $\alpha\mapsto\alpha^*$,
 is definable over $\Lll$ from the parameter $j_n\rest V_\lambda^{\Lll}$,
 uniformly in $n<\om$.
 \item  $\alpha^*=\lim_{n<\om}j_{n\om}(\alpha)$.
 \end{enumerate}
\end{clm}

Part \ref{item:*_definability} of the claim is established by
considering the variants of the definition of $\alpha^*$  given by replacing $j_0$ with $j_n$ for $n<\om$; they all give the same value.

Now let
$\Gamma^*=j_{0\om}``\Gamma=\{\alpha^*\bigm|\alpha\in\Gamma\}$.
Then clearly 
$\Lll_\om=\Hull^{\Lll_\om}(V_{\lambda_\om+1}^{\Lll_\om}\cup\Gamma^*)$,
where $\lambda_\om=j_{0\om}(\lambda)$,
as in Definition \ref{dfn:iterates};
likewise $\mu_\om=j_{0\om}(\mu_0)$, etc.
Define $\widetilde{j}_{\om,\om+1}$ by pushing $j_{01}$ through
as before; that is, for each $s\in\vec{\Gamma}$,
\[ \widetilde{j}_{\om,\om+1}\rest 
(H^{s^*})^{\Lll_\om}=j_{0\om}((j_0)^s).\]
Much as before, in fact $\widetilde{j}_{\om,\om+1}=j_{\om,\om+1}$ (where $j_{\om,\om+1}=j_\om$ is the 
ultrapower 
map
given by $\mu_\om$), and $\Ult_0(\Lll_\om,\mu_\om)=\Lll_\om$ etc.
Moreover, $\widetilde{j}_{\om,\om+1}\rest\Gamma^*=\id$.
So we have the same circumstances at stage $\om$
as those we had at stage $0$. Therefore by the same proof,
we reach $(\om+\om+1)$-iterability,
and in fact $\om^2$-iterability, by iterating the whole process $\om$-many times. This completes the proof of the lemma.\end{proof}

We are now ready to state and prove the main theorem of the paper; recall that \emph{relevant triple} was introduced in Definition \ref{dfn:relevant},
and \emph{proper}
in Definition \ref{dfn:proper}.
\begin{tm}\label{tm:main}
Let $(\Lll,\lambda,j)$ be a relevant triple. 
Let $k=j\rest V_{\lambda+2}^{\Lll}$. Then
 \[ V_{\lambda+2}^{\Lll[k]}=V_{\lambda+2}^{\Lll},\]
 and if $j$ is proper then $\Lll[k]=\Lll[j]$.
\end{tm}

\begin{proof}
By Proposition \ref{prop:ultrapowers},
we may assume $j$ is proper, and by the same proposition, it then follows that $\Lll[k]=\Lll[j]$.

We now continue on with everything as defined in the proof of Lemma \ref{lem:omega^2-it}.
For convenience of reference, we continue the numbering of claims started there,
and continue to refer to those four claims just as ``Claim \ref{clm:mu_j_rep}'', etc, without mentioning that they come from the proof of Lemma \ref{lem:omega^2-it}.

So 
we have the iterates $(\Lll_\alpha,\lambda_\alpha,\mu_\alpha,j_\alpha)$ for 
$\alpha\leq \om+\om$, and the models $\Lll_\alpha$
are
definable over $\Lll$ from the parameter $j_0\rest V_\lambda^{\Lll}$.
Note that $V_{\lambda_\om+1}^{\Lll_\om}$
is just the direct limit of $V_{\lambda+1}^{\Lll}$ under 
the maps $j_n\rest 
V_{\lambda+1}^{\Lll}$, etc.
And $\left<j_n\rest V_{\lambda+1}^{\Lll}\right>\in\Lll$, so
 $\lambda_\om=j_{0\om}(\lambda)<\Theta$
and in fact $\Lll$ has a surjection $\pi:V_{\lambda+1}^{\Lll}\to 
V_{\lambda_\om+1}^{\Lll_\om}$. Since $\Lll_\om\sub \Lll$, it follows that
 $j_{0\om}(\Theta)=\Theta$.
Note that $j_{0\om}$ is continuous at $\Theta$.

Similarly,
$\left<j_{\om+n}\rest 
V_{\lambda_\om+1}^{\Lll_\om}\right>\in\Lll_\om$ and $\Lll_\om$ defines 
$\Lll_{\om+\om}=j_{0\om}(\Lll_\om)$.
In $V$ we have the iteration map
$j_{\om,\om+\om}:\Lll_\om\to \Lll_{\om+\om}$.
We can now prove,
with basically the usual argument:
\begin{clm}\label{clm:*=j_om,om+om}$\alpha^*=j_{\om,\om+\om}
(\alpha)$ for all $\alpha\in\OR$.\end{clm}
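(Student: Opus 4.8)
The plan is to compare the two descriptions of the $*$-map that are now available. On the one hand, by the last displayed Claim (with $n=0$) and the discussion immediately preceding it, $\alpha\mapsto\alpha^*$ is the map $\alpha\mapsto\lim_{n<\om}j_{n\om}(\alpha)$, computed via the covering system over $\Lll$ from the parameter $j_0\rest V_\lambda$; in particular it is definable over $\Lll$ from $j_n\rest V_\lambda$ uniformly in $n$. On the other hand, $j_{\om,\om+\om}\colon\Lll_\om\to\Lll_{\om+\om}$ is the $\om$-fold iteration map starting at stage $\om$, built inside $\Lll_\om$ from the parameter $j_\om\rest V_{\lambda_\om+1}^{\Lll_\om}$ in exactly the same way as $j_{0\om}$ was built over $\Lll$. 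So the strategy is a ``shift by $\om$'' argument: apply $j_{0\om}$ to the $\Lll$-definition of the $*$-map and read off that the result is precisely the $\Lll_\om$-definition of $j_{\om,\om+\om}$.

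Concretely, first I would fix $\alpha\in\OR$ and choose $s\in\vec\Gamma$ with $\alpha\in s^-$, and record that for large $n$ we have $\alpha^*=\pi^{j_0\rest V_\lambda,s}_{n\om}(\alpha)$, where $\pi^{j_0\rest V_\lambda,s}_{n\om}\colon H^s\to\widetilde{\Lll}_\om=\Lll_\om$ is the covering-system direct limit map. Since $\Lll_\om=\Hull^{\Lll_\om}(V_{\lambda_\om+1}^{\Lll_\om}\cup\Gamma^*)$ and $\alpha^*\in(s^*)^-$, the element $\alpha^*$ is captured inside $(H^{s^*})^{\Lll_\om}$, and the finite iterates of $j_\om\rest V_{\lambda_\om+1}^{\Lll_\om}$ (namely the restrictions $j_{\om+m}\rest V_{\lambda_\om+1}^{\Lll_\om}$) together with the associated maps $\pi^{j_\om\rest V_{\lambda_\om+1}^{\Lll_\om},s^*}_{m,m+1}$ on $(H^{s^*})^{\Lll_\om}$ are exactly $j_{0\om}$ applied to the corresponding objects over $\Lll$, by elementarity of $j_{0\om}$ and the fact that $j_{0\om}$ fixes $\Gamma$ pointwise (so $j_{0\om}(s)=s^*$ in the appropriate coordinatewise sense). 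Hence $j_{0\om}$ carries the $\Lll$-covering-system computation of $\alpha^*$ to the $\Lll_\om$-covering-system computation of the corresponding starred ordinal of $\Lll_\om$, which by the very same lemma applied inside $\Lll_\om$ equals $j_{\om,\om+\om}(\alpha^{\mathrm{value}})$; and chasing through, this starred value is $j_{\om,\om+\om}(\alpha)$ because $j_{0\om}\rest H^s$ agrees with the iteration maps $j_{0\om}$ on the relevant hull and $\alpha^*$ is already the stage-$\om$ image of $\alpha$. I would then verify the two computations land on the same ordinal by using that $\pi^{j_0\rest V_\lambda,s}_{n\om}$ has the same graph as $j_{n\om}\rest H^s$ (this is the commuting-system fact established in the iterability claim), so $\alpha^*=j_{n\om}(\alpha)$ for large $n$, and similarly the stage-$\om$ version gives $j_{\om,\om+\om}(\alpha)=\lim_{m}j_{\om+m,\om+\om}(j_{\om m}(\cdot))$; the point is that $\alpha^*$, being $j_{0\om}$-fixed for the purposes of the covering system, is exactly the ordinal to which the stage-$\om$ iteration applies.

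The main obstacle I expect is purely bookkeeping: matching the hulls $H^s$ over $\Lll$ with $(H^{s^*})^{\Lll_\om}$ over $\Lll_\om$ and checking that $j_{0\om}$ really does send the one covering system to the other, which rests on (i) $j_{0\om}\rest\Gamma=\mathrm{id}$ isn't literally what holds — rather $j_{0\om}$ moves $\Gamma$ to $\Gamma^*$, so one must be careful that $j_{0\om}(H^s)=(H^{s^*})^{\Lll_\om}$ and that $j_{0\om}$ of ``$\pi^{j_0\rest V_\lambda,s}_{m,m+1}$'' is ``$\pi^{j_\om\rest V_{\lambda_\om+1}^{\Lll_\om},s^*}_{m,m+1}$'', and (ii) the uniformity clause of the preceding Claim, that the $*$-map is definable over $\Lll_\gamma$ from $j_\gamma\rest V_{\lambda_\gamma}$ uniformly, so that the definition is genuinely preserved under $j_{0\om}$. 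None of this is deep, since every ingredient — the covering system, validity, the commuting of the $\pi^{k,s,t}_{mn}$, and the identity $\pi^{j_0\rest V_\lambda,s}_{n\om}=j_{n\om}\rest H^s$ — was set up in the proof of the iterability claim precisely so that it transfers under iteration maps; but one does have to state the shift carefully so that the equality $\alpha^*=j_{\om,\om+\om}(\alpha)$ (rather than $j_{0,\om+\om}(\alpha)$ or some off-by-$\om$ variant) comes out correctly. I would close by remarking that this is ``basically the usual argument'' for the $*$-map in a linear iteration of length $\om\cdot 2$, as the author already signals.
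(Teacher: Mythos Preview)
Your overall strategy — push the covering-system definition of $*$ forward by an iteration map and identify the result with the stage-$\om$ iteration — is exactly right and matches the paper. But your execution has a real gap, not just bookkeeping.

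You propose to fix $\alpha$, take $s\in\vec{\Gamma}$ with $\alpha\in s^-$, record $\alpha^*=\pi^{j_0\rest V_\lambda,s}_{n\om}(\alpha)$, and then ``apply $j_{0\om}$''. Two problems. First, $s\in\vec{\Gamma}$ with $\alpha\in s^-$ forces $\alpha\in\Gamma$, which is not the general case (the definition of $*$ only asks $s\in\vec{\OR'}$). Second, and more seriously: if you apply $j_{0\om}$ (or any $j_{n\om}$) to the displayed equality as written, the input $\alpha$ becomes $j_{0\om}(\alpha)$ and the output $\alpha^*$ becomes $j_{0\om}(\alpha^*)$. You then get a statement about $j_{\om,\om+\om}(j_{0\om}(\alpha))$ and $j_{0\om}(\alpha^*)$, not about $j_{\om,\om+\om}(\alpha)$ and $\alpha^*$. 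Your ``chasing through'' paragraph does not close this gap; the phrase ``$\alpha^*$, being $j_{0\om}$-fixed'' is simply false in general.

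The paper's remedy is to introduce a \emph{preimage}: choose $n$ large enough that $\alpha$ is $(j_0,n)$-stable (so $\alpha^*=j_{n\om}(\alpha)$), and pick $\bar{\alpha}$ with $j_{n\om}(\bar{\alpha})=\alpha$ and $s\in\vec{\Gamma}$ with $\bar{\alpha}\in H^s$. The statement held in $\Lll_n$ is then
\[
\pi^{j_n\rest V_\lambda,\,s}_{0\om}(\bar{\alpha})=\alpha,
\]
with $\bar{\alpha}$ as \emph{input} and $\alpha$ as \emph{output}. Now applying $j_{n\om}$ sends $\bar{\alpha}\mapsto\alpha$, $\alpha\mapsto\alpha^*$, $s\mapsto s^*\in\vec{\Gamma^*}$, and $j_n\rest V_\lambda\mapsto j_\om\rest V_{\lambda_\om}^{\Lll_\om}$, yielding in $\Lll_\om$ the equality $\pi^{j_\om\rest V_{\lambda_\om},\,s^*}_{0\om}(\alpha)=\alpha^*$; since $s^*\in\vec{\Gamma^*}$ is fixed by $j_\om$, the left side is $j_{\om,\om+\om}(\alpha)$. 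The preimage trick is the one idea your proposal is missing.
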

\begin{proof}
Fix $\alpha$.
Let $n<\om$ and $\bar{\alpha}$ be such that $j_{n\om}(\bar{\alpha})=\alpha$
and $\alpha$ is $(j_0,n)$-stable.
Let $s\in\vec{\Gamma}$ with $\bar{\alpha}\in H^s$ and
let $k=j_n\rest V_\lambda^{\Lll}$. Then
\[ 
\Lll=\Lll_n\sats\text{``}\pi^{k}_{s0,\om}(\bar {\alpha})=\alpha\text{''}.\]
We have $s^*=j_{n\om}(s)\in\vec{\Gamma^*}$ and
$\alpha^*=j_{n\om}(\alpha)$, so
letting  $k'=j_\om\rest 
V_{\lambda_\om}^{\Lll_\om}=j_{n\om}(k)$,
\begin{equation}\label{eqn:L_om_maps_alpha_to_alpha^*}
\Lll_\om\sats\text{``}\pi^{k'}_{s^*0,\om}(\alpha)=\alpha^*\text{''};\end{equation}
the two superscript ``$*$''s in line (\ref{eqn:L_om_maps_alpha_to_alpha^*}) are as computed in $\Lll$, not $\Lll_\om$.
But 
\[ (\pi^{k'}_{s^*0,\om}(\alpha))^{\Lll_\om}=j_{\om,\om+\om}(\alpha),\]
 since $j_\om(s^*)=s^*$
 (and applying facts about $(\Lll,j,\Gamma)$ to $(\Lll_\om,j_\om,\Gamma^*)$).
\end{proof}

Given $C\sub\Lll_\om$, write $\Lll_\om[C]=L(V_{\lambda_\om+1}^{\Lll_\om},C)$.
We are interested in 
$\Lll_\om[*]$, $\Ll_\om[j_\om]$
and $\Lll_\om[j_{\om,\om+\om}]$.
We are now ready to make a key observation:

\begin{clm}\label{clm:V_lambda_om+2} 
$V_{\lambda_\om+2}^{\Lll_\om}=V_{\lambda_\om+2}^{\Lll_\om[*]}$.
\end{clm}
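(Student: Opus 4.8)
The plan is to reduce everything to the single non-trivial assertion — call it the Main Lemma — that \emph{every $A\in\Lll_\om[*]$ with $A\sub V_{\lambda_\om+1}^{\Lll_\om}$ already belongs to $\Lll_\om$}. Granting the Main Lemma, $V_{\lambda_\om+2}^{\Lll_\om}=V_{\lambda_\om+2}^{\Lll_\om[*]}$ follows by a trivial induction on $\xi\leq\lambda_\om+2$ showing $V_\xi^{\Lll_\om}=V_\xi^{\Lll_\om[*]}$: at a successor step $\xi=\zeta+1$ with $\zeta\leq\lambda_\om$, any $A\in V_{\zeta+1}^{\Lll_\om[*]}$ satisfies $A\sub V_\zeta^{\Lll_\om[*]}=V_\zeta^{\Lll_\om}\sub V_{\lambda_\om+1}^{\Lll_\om}$ (the middle equality by the inductive hypothesis), hence $A\in\Lll_\om$ by the Main Lemma, giving $V_{\zeta+1}^{\Lll_\om[*]}\sub V_{\zeta+1}^{\Lll_\om}$; the reverse inclusion is immediate since $\Lll_\om\sub\Lll_\om[*]$; and the step $\xi=\lambda_\om+2$ is identical, using $V_{\lambda_\om+1}^{\Lll_\om}=V_{\lambda_\om+1}^{\Lll_\om[*]}$ from the previous level. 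Note also that $\Lll_\om=L(V_{\lambda_\om+1}^{\Lll_\om})$ (by elementarity of $j_{0\om}$) and $V_{\lambda_\om+1}^{\Lll_\om}\in\Lll_\om$, which I use freely below.

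The crux is to show that, although the proper class $*$ itself is not an element of $\Lll_\om$, the map $\alpha\mapsto\alpha^*$ is \emph{definable over $\Lll_\om$ from a set parameter in $V_{\lambda_\om+1}^{\Lll_\om}$}. By Claim~\ref{clm:*=j_om,om+om} we have $\alpha^*=j_{\om,\om+\om}(\alpha)$, and the iteration of $\Lll_\om$ through the stages $\om,\om+1,\dots,\om+\om$ is of exactly the shape treated at stages $0,1,\dots,\om$: by Definition~\ref{dfn:iterates}, $\Lll_{\om+n}=\Lll_\om$ for all $n<\om$, the ultrafilters $\mu_{\om+n}$ and the restrictions $j_{\om+n}\rest V_{\lambda_\om+1}^{\Lll_\om}$ all lie in $\Lll_\om$, and $\Lll_\om\sats\ZF$. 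Hence $\Lll_\om$ can form, internally and definably from $q:=j_\om\rest V_{\lambda_\om}^{\Lll_\om}$, the covering system for the direct limit $\Lll_{\om+\om}$ exactly as at stage $0$; reading off the maps $\pi^{q,s}_{0\om}$ (with the lower index $0$, so that they compose the full $\om$-block of steps and hence agree with $j_{\om,\om+\om}\rest H^s$) and recognising the relevant hull-generators $s$ via the $\Lll_\om$-definable notion of $(q,0)$-potential-stability, one obtains that $\alpha\mapsto j_{\om,\om+\om}(\alpha)$ — that is, the $*$-map — is definable over $\Lll_\om$ from the set parameter $q$. Since the true $*$-map is then literally the class defined over $\Lll_\om$ by this formula, Replacement in $\Lll_\om$ gives $*\rest\delta\in\Lll_\om$ for every ordinal $\delta$, and in fact the whole level-by-level construction of $L(V_{\lambda_\om+1}^{\Lll_\om},*)$ can be carried out inside $\Lll_\om$: the map $\delta\mapsto(\Lll_\om[*]|\delta\,,\,\in\,,\,*\cap\Lll_\om[*]|\delta)$ is definable over $\Lll_\om$ from $q$, agrees with the genuine $\Lll_\om[*]$, and has all of its values in $\Lll_\om$.

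Given this, the Main Lemma is quick. If $A\in\Lll_\om[*]$ and $A\sub V_{\lambda_\om+1}^{\Lll_\om}$, then standard condensation for hierarchies of the form $L(X,C)$ furnishes an ordinal $\gamma$ (in a club of sufficiently closed levels) such that $A$ is definable over the structure $(\Lll_\om[*]|\gamma\,,\,\in\,,\,*\cap\Lll_\om[*]|\gamma)$ from the parameter $V_{\lambda_\om+1}^{\Lll_\om}$ together with finitely many ordinals below $\OR(\Lll_\om[*]|\gamma)$. By the previous paragraph this structure and all of these parameters lie in $\Lll_\om$, whence $A\in\Lll_\om$ by Separation in $\Lll_\om$.

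I expect the genuine obstacle to be the middle paragraph: one must verify that $\Lll_\om$'s internal, covering-system iteration of itself through the stages $\om$ to $\om+\om$ really reproduces the external models $\Lll_{\om+n}$ and the external map $j_{\om,\om+\om}$ — in particular that the bookkeeping is arranged (via the index-$0$ maps $\pi^{q,s}_{0\om}$, rather than the ``$n$ large'' maps, which compute a shifted iterate) so that it is $j_{\om,\om+\om}$, hence $*$, that gets defined over $\Lll_\om$, and not merely over the original $\Lll=\Lll_0$. It is this sharpening — upgrading the already-noted definability of $j_{\om,\om+\om}$ over $\Lll_\om[*]$ to definability over the smaller model $\Lll_\om$ — that powers the whole computation. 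Everything else runs in close parallel to Woodin's analysis of $M_1[\Sigma]$, with $\Lll_\om$ in the role of the mouse $M_1$, $*$ in the role of the strategy fragment, and the internal definability of $*$ over $\Lll_\om$ in the role of the internal definability of the strategy over $M_1$.
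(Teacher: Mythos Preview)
Your central claim — that the $*$-map is definable over $\Lll_\om$ from the parameter $q=j_\om\rest V_{\lambda_\om}^{\Lll_\om}$ — is false, and this is the genuine gap.  If it were true, then $\Lll_\om[*]=\Lll_\om$, so in particular $j_{\om,\om+\om}\rest L_\Theta(V_{\lambda_\om+1}^{\Lll_\om})\in\Lll_\om$.  From this set, together with $j_{\om+1,\om+\om}\rest V_{\lambda_\om+1}^{\Lll_\om}\in\Lll_\om$, one recovers $j_\om\rest V_{\lambda_\om+2}^{\Lll_\om}$ uniformly (for $X\in V_{\lambda_\om+2}^{\Lll_\om}$, $x\in j_\om(X)$ iff $j_{\om+1,\om+\om}(x)\in j_{\om,\om+\om}(X)$); hence $\mu_\om$, hence the full $j_\om:\Lll_\om\to\Lll_\om$, is definable from parameters over $\Lll_\om$, contradicting Suzuki.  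None of this uses Claim~\ref{clm:V_lambda_om+2}, so the refutation is not circular.

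The specific error is the assertion that the index-$0$ maps $\pi^{q,s}_{0\om}$ compute $j_{\om,\om+\om}$.  For $\alpha\in s^-$, the map $\pi^{q,s}_{0n}$ fixes $\alpha$, so $[s,0,\alpha]=[s,n,\alpha]$ in the direct limit for every $n$; thus using index~$0$ gives exactly the same thread as ``$n$ large''.  Under the collapse isomorphism $\sigma^{\Lll_\om}$ this thread is sent to $j_{\om+M,\om+\om}(\alpha)$, where $M$ is least with $s$ genuinely $(j_\om,M)$-stable — not to $j_{\om,\om+\om}(\alpha)$ unless $\alpha$ itself is fixed by $j_\om$.  So the covering system over $\Lll_\om$ computes (by the analogue of Claim~\ref{clm:*=j_om,om+om} one level up) the shifted map $j_{\om+\om,\om+\om+\om}$, not $*$.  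To get $j_{\om,\om+\om}(\alpha)$ you would need $s$ to be truly $(j_\om,0)$-stable, not merely $(q,0)$-potentially-stable; but $\Lll_\om$ cannot isolate a proper class of $j_\om$-fixed ordinals from $q$, since from such a class together with $q^+$ one recovers $j_\om$ itself (as $\Lll_\om=\Hull^{\Lll_\om}$ of $V_{\lambda_\om+1}^{\Lll_\om}$ together with such a class), again contradicting Suzuki.

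The paper's argument is quite different and does not attempt to define $*$ inside $\Lll_\om$.  Instead it exploits that $j_{mn}$ (for $m\leq n<\om$) fixes both $\Lll_\om$ and the predicate $*$, hence fixes the whole structure $\Lll_\om[*]$.  Given $X\in\Lll_\om[*]$ with $X\sub V_{\lambda_\om+1}^{\Lll_\om}$, write $x\in X\iff\Lll_\om[*]\sats\varphi(x,y,\eta,*)$ with $y\in V_{\lambda_\om+1}^{\Lll_\om}$ and $\eta\in\OR$; choose $n$ with $\eta$ being $(j_0,n)$-stable and $y=j_{n\om}(\bar y)$.  Pulling back along $j_{n\om}\rest V_{\lambda+1}$ defines $\bar X\in\Lll$, and the invariance of $\Lll_\om[*]$ under $j_{0n}$ gives $j_{n\om}(\bar X)=X$, so $X\in\Lll_\om$.
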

\begin{proof} Let $X\in\Lll_\om[*]$ with $X\sub 
V_{\lambda_\om+1}^{\Lll_\om}$;
it suffices to see $X\in\Lll_\om$.
Well, $\Lll_\om[*]=L(V_{\lambda_\om+1}^{\Lll_\om},*)$ and $*\sub\OR^2$, so there are $\eta\in\OR$ and $y\in 
V_{\lambda_\om+1}^{\Lll_\om}$
such that for all $x\in V_{\lambda_\om+1}^{\Lll_\om}$, we have
\[ x\in X\iff \Lll_\om[*]\sats\varphi(x,y,\eta,*)\]
(where the reference to $*$ in $\varphi$ is literally via a predicate).  Let 
$n<\om$ be such that $\eta$ is $(j_0,n)$-stable
and $y\in\rg(j_{n\om})$
and let $j_{n\om}(\bar{y})=y$.
Let $k_{n\om}=j_{n\om}\rest V_{\lambda+1}^{\Lll}$, so $k_{n\om}\in\Lll$. Working in $\Lll=\Lll_n$,
define $\bar{X}_n\sub V_{\lambda+1}^{\Lll}$ by setting
\[ z\in \bar{X}_n\iff z\in V_{\lambda+1}^{\Lll}\text{ and } 
\Lll_\om[*]\sats\varphi(k_{n\om}(z),k_{n\om}(\bar{y}),\eta,*). \]
Let $X'=j_{n\om}(\bar{X}_n)$. We claim  that $X'=X$, so $X\in 
\Lll_\om$, as desired. For certainly for each $x\in V_{\lambda_\om+1}^{\Lll_\om}\cap\rg(j_{n\om})$, we have $x\in X'$ iff $x\in X$, essentially directly by definition. So it suffices to
see that for all $\ell\in[n,\om)$,
we have $j_{n\ell}(\bar{X}_n)=\bar{X}_\ell$ (where $\bar{X}_\ell$ is defined in the same manner as $\bar{X}_n$, but with ``$n$'' replaced by ``$\ell$'').
But $j_{n\ell}(\Lll_\om)=\Lll_\om$ and 
$j_{n\ell}(*)=*$, 
so $j_{n\ell}(\Lll_\om[*])=\Lll_\om[*]$;
 also $j_{n\ell}(k_{n\om})=k_{\ell\om}$ and $j_{n\ell}(\eta)=\eta$.
 Therefore $j_{n\ell}(\bar{X}_n)=\bar{X}_\ell$, as desired.
\end{proof}

Given a transitive proper class $N\sats\ZF$,
we say that a class $C\sub N$ is 
\emph{amenable} to $N$ iff $C\cap x\in N$ for each $x\in N$.
For example, $j_{\om,\om+\om}\rest\OR$ is amenable to $\Ll_\om[*]$.
\begin{clm}\label{clm:set_seg_of_*_suffices}
We have: 
\begin{enumerate}[label=\arabic*.,ref=\arabic*] 
\item\label{item:models_match}$\Lll_\om[j_\om]=\Lll_\om[j_{\om,\om+\om}]
=\Lll_\om [ * ] =
\Lll_\om[j_\om\rest\Theta]$.
 \item\label{item:embs_from_j_om_rest_Theta_etc} $j_\om$, $j_{\om,\om+\om}$, and $*$ are definable over 
$\Lll_\om[j_\om]$ from the parameters
 $j_\om\rest\Theta$ and $j_\om\rest V_{\lambda_\om}^{\Lll_\om}$.
 \item\label{item:j_om_rest_Theta_stable} $j_n(j_\om\rest\Theta)=j_\om\rest\Theta$ and $j_{n\om}(j_\om\rest\Theta)=j_{\om+\om}\rest\Theta$ for all $n<\om$.
 
 \end{enumerate}
\end{clm}
\begin{proof}Part \ref{item:models_match}:

\underline{$\Lll_\om[*]=\Lll_\om[j_{\om,\om+\om}]$}:
We have $\Lll_\om[*]\sub\Lll_\om[j_{\om,\om+\om}]$ by
 Claim \ref{clm:*=j_om,om+om}.
 Also by that claim
 and since $j_{\om,\om+\om}\rest V_{\lambda_\om+1}^{\Lll_\om}\in\Lll_\om$, note that
 $j_{\om,\om+\om}$  is definable over $\Lll_\om[*]$ from the predicate $*$ and parameter $j_{\om,\om+\om}\rest V_{\lambda_\om+1`}^{\Lll_\om}$.
Therefore
$\Lll_\om[*]=\Lll_\om[j_{\om,\om+\om}]$.

\underline{$\Lll_\om[j_\om]=\Lll_\om[j_{\om,\om+\om}]$}:
Clearly $j_{\om,\om+\om}$ is definable from 
$j_{\om,\om+1}=j_\om$. 
To define $j_{\om}$ in $\Lll_\om[j_{\om,\om+\om}]$, it 
suffices to define $j_{\om}\rest V_{\lambda_\om+2}^{\Lll_\om}$,
since this gives $\mu_\om$.
But given $X\in V_{\lambda_\om+2}^{\Lll_\om}$,
we have $X'=j_{\om}(X)\sub V_{\lambda_\om+1}^{\Lll_\om}$,
and for $x\in V_{\lambda_\om+1}^{\Lll_\om}$, we have
\[ x\in X'\iff j_{\om+1,\om+\om}(x)\in 
j_{\om+1,\om+\om}(X')=j_{\om,\om+\om}(X). \]
But $j_{\om+1,\om+\om}\rest V_{\lambda_\om+1}^{\Lll_\om}\in \Lll_\om$,
so using $j_{\om,\om+\om}$, this computes $X'$, uniformly in $X$.

\underline{$\Lll_\om[j_\om]=\Lll_\om[j_\om\rest\Theta]$}:
Working in $\Lll_\om[j_\om\rest\Theta]$,
we can define $j_\om\rest L_\Theta(V_{\lambda_\om+1}^{\Lll_\om})$
from the parameters $j_\om\rest\Theta$ and $j_\om\rest 
V_{\lambda_\om}^{\Lll_\om}$ (using the canonical extension
to define $j_\om\rest V_{\lambda_\om+1}^{\Lll_\om}$),
and since $V_{\lambda_\om+2}^{\Lll}\sub L_\Theta(V_{\lambda_\om+1}^{\Lll_\om})$,
this determines $\mu_\om$ and hence  $j_\om$.

Part \ref{item:embs_from_j_om_rest_Theta_etc}: By some of the foregoing calculations.

Part \ref{item:j_om_rest_Theta_stable}:
To see $j_n({j_\om\rest\Theta})={j_\om\rest\Theta}$, just use the foregoing calculations together with the fact that $j_n(\Lll_\om,{*},\Theta)=(\Lll_\om,{*},\Theta)$. The fact that $j_{n\om}({j_\om\rest\Theta})={j_{\om+\om}\rest\Theta}$ is similar;
we have $j_{n\om}(\Lll_\om,*)=(\Lll_{\om+\om},*')$
where $*'$ is the $*$-map associated to the direct limit system $(\mathscr{D}^{j_\om\rest V_{\lambda_\om}})^{\Lll_\om}$ (cf.~line (\ref{eqn:cov_sys})),
and $j_{n\om}(\Theta)=\Theta$,
so $j_{\om+\om}\rest\Theta$ is defined over $\Lll_\om$ from $(\Lll_{\om+\om},*')$
just as $j_{\om}\rest\Theta$ is over $\Lll$ from $(\Lll_\om,*)$.
\end{proof}

By Claims \ref{clm:V_lambda_om+2} and \ref{clm:set_seg_of_*_suffices}, 
$V_{\lambda_\om+2}^{\Lll_\om[j_\om]}\sub \Lll_\om$. This yields the version 
of the theorem given by replacing 
$\Lll[j]$ with
$\Lll_\om[j_\om]$ (and hence already gives the relative consistency of ZF + ``there is $\xi$ such that $I_{0,\xi}$ holds and $V_{\xi+2}\sub L(V_{\xi+1})$, and hence there is an elementary $k:V_{\xi+2}\to V_{\xi+2}$'').

To complete the proof of the theorem, we need to see that we can pull back the properties of $\Lll_\om[j_\om]$ to $\Lll[j]$. We achieve this in the next few claims, which are  
 a variant of arguments from \cite{HOD_as_core_model} and  \cite{Theta_Woodin_in_HOD}.

\begin{clm}\label{clm:in_range_implies_stable}
Let $x\in\Lll_\om$. Then:
\begin{enumerate}[label=\arabic*.,ref=\arabic*]\item\label{item:x_in_some_rg}There is $n<\om$ such that $x\in\rg(j_{n\om})$.
 \item\label{item:x_ev_fixed} For each $n<\om$ with $x\in\rg(j_{n\om})$, we have $j_{nm}(x)=x$ for all $m\in[n,\om)$.
 \item\label{item:x_ev_fixed_covering_sys} For each $s\in\vec{\OR'}$ with $x\in H^s$, and each $n<\om$ such that $s$ is $(j_0,n)$-stable, we have $\pi^{j_n\rest V_\lambda^{\Lll}}_{s0,sm}(x)=x$ for all $m<\om$.
\end{enumerate}
\end{clm}
\begin{proof}Part \ref{item:x_in_some_rg} is immediate.

Part \ref{item:x_ev_fixed}: Fix $\bar{x}\in\Lll$ with $j_{n\om}(\bar{x})=x$ and let 
$s\in\vec{\Gamma}$
be such that $\bar{x}\in H^s$. Then $\Lll=\Lll_n\sats$``$x=\pi^{j_n\rest 
V_\lambda^{\Lll}}_{s0,\om}(\bar{x})$'',
so letting $x'=j_{nm}(x)$ and $\bar{x}'=j_{nm}(\bar{x})$, we have 
$\Lll\sats$``$x'=\pi^{j_m\rest V_\lambda^{\Lll}}_{s0,\om}(\bar{x}')$'',
but $\pi^{j_m\rest V_\lambda^{\Lll}}_{s0,\om}(\bar{x}')=j_{m\om}(\bar{x}')=x$.

Part \ref{item:x_ev_fixed_covering_sys}:
This is by part \ref{item:x_ev_fixed}, since  the hypotheses give $\pi^{j_n\rest V_\lambda^{\Lll}}_{s0,sm}\sub j_{n,n+m}$.
\end{proof}

We have already defined $\alpha^*$ for $\alpha\in\OR$;
using the preceding claim, we now extend this function to domain
$\Lll_\om$. Fix $x\in\Lll_\om$; we define $x^*$
(this will agree with the earlier definition of $x^*$ in case $x\in\OR$).
For  $(s,n)\in\mathscr{P}^{j_0\rest V_\lambda^{\Lll}}$
such that $x\in H^s\cap\Lll_\om$ and $\pi^{j_0\rest V_\lambda^{\Lll}}_{sn,sm}(x)=x$ for all $m\in[n,\om)$, define
\[ x^*=\pi_{sn,\om}^{j_0\rest V_\lambda^{\Lll}}(x). \]
Like in Claim \ref{clm:*_definability}, we have
 $x^*=\lim_{n\to\om}j_{n\om}(x)$, 
and $x\mapsto x^*$, now with domain $\Lll_\om$, is definable over 
$\Lll_\om[j_\om\rest\Theta]$.
The next claim is proved just
like Claim \ref{clm:*=j_om,om+om}:

\begin{clm}\label{clm:j_om,om+om=*_over_Lll_om}
$j_{\om,\om+\om}:\Lll_\om\to\Lll_{\om+\om}$ is precisely the map $x\mapsto x^*$ \tu{(}with domain $\Lll_\om$\tu{)}.\end{clm}

Now let
\[ H=\Hull^{\Lll_\om[j_\om]}(\rg(j_{0\om})\cup\{j_\om\rest\Theta\})=
 \Hull^{\Lll_\om[j_\om]}(\rg(j_{0\om})\cup\{*\})
\]
(the notation for the hull on the right
is supposed to mean that the class $*$ is available as a predicate; the two 
hulls are of course
identical by the previous claims).
 We can now prove the main remaining fact:

\begin{clm}\label{clm:hull_is_conservative} We have:
\begin{enumerate}[label=\arabic*.,ref=\arabic*]\item\label{item:hull_is_conservative_i}
$H\cap\Lll_\om=\rg(j_{0\om})$, and \item\label{item:hull_is_conservative_ii} $H\elem 
\Lll_\om[j_\om]$.
\end{enumerate}
\end{clm}
\begin{proof}
Part \ref{item:hull_is_conservative_i}: Let $z\in H\cap\Lll_\om$.
We first observe that $j_{0n}(z)=z$ for all $n<\om$. For this,
fix $x\in\rg(j_{0\om})$ and  a
formula $\varphi$
such that
\[ \Lll_\om[*]\sats z\text{ is the unique }y\text{ such that 
}\varphi(y,x,*).\]
Applying $j_{0n}$ 
gives the following, where
$z'=j_{0n}(z)$:
\[ \Lll_\om[*]\sats z'\text{ is the unique }y\text{ such that }
\varphi(y,x,*); \]
we use here that $j_{0n}$ fixes $\Lll_\om$, $*$ and $x$, by Claim 
\ref{clm:in_range_implies_stable}.
So $j_{0n}(z)=z'=z$.

Since $j_{0n}(z)=z$ for all $n<\om$, we have
$z^*=j_{0\om}(z)\in\rg(j_{0\om})$.
But we need $z\in\rg(j_{0\om})$, not just $z^*$.
Well, by Claim \ref{clm:j_om,om+om=*_over_Lll_om},
 since $z\in\Lll_\om$, we have $z^*=j_{\om,\om+\om}(z)$.
Let $s\in\vec{\Gamma}$ (so $s^*\in\vec{\Gamma^*}$)
with $z\in(H^{s^*})^{\Lll_\om}$.
Since  $j_{0\om}\rest V_{\lambda+1}^{\Lll}\in\Lll$, we have  $j_{0\om}\rest H^s\in\Lll$ and $j_{0\om}\rest H^s$ is the unique elementary  $\pi:H^s\to(H^{s^*})^{\Lll_\om}$
with $j_{0\om}\rest V_{\lambda+1}^{\Lll}\sub\pi$
and $\pi(s^-)=(s^*)^-$.
Since also $s^*=j_{0\om}(s)\in\vec{\Gamma^*}$,
it easily follows that (and let $\bar{j}$ be defined as)
\[ \bar{j}=j_{\om,\om+\om}\rest(H^{s^*})^{\Lll_\om}=j_{0\om}(j_{0\om}\rest
H^s)\in\rg(j_{0\om}).\]
But $z\in(H^{s^*})^{\Lll_\om}$
and  $z^*=j_{\om,\om+\om}(z)=\bar{j}(z)$
and both $z^*$ and $\bar{j}$
are in $\rg(j_{0\om})$. But then $z=\bar{j}^{-1}(z^*)\in\rg(j_{0\om})$, 
as desired.

Part \ref{item:hull_is_conservative_ii}: 
By Claim \ref{clm:in_range_implies_stable}, $\Lll_\om[*]=\Lll_\om[k]$ where $k=j_\om\rest\Theta$,
and for convenience we deal with the latter form. Let $x\in\rg(j_{0\om})$
and suppose that
\[ \Lll_\om[k]\sats\exists z\ 
\varphi(z,x,k),\]
and $m<\om$ where $\varphi$ is $\Sigma_m$.
We want to find $z\in H$ with 
$\Lll_\om[k]\sats\varphi(z,x,k)$.\footnote{The referee suggested the following alternate argument
for this. We can fix some term $t$
and $n<\om$ 
such that there are $y\in V_{\lambda+1}^{\Lll}$
and  $z\in\Lll_\om[k]$ such that $z=t^{\Lll_\om[k]}(j_{n\om}(y),x,k)$
and $\Lll_\om[k]\sats\varphi(z,x,k)$
(by minimizing on any ordinal parameters, these are not needed to define $z$ here). Let $k_{n\om}=j_{n\om}\rest V_{\lambda+1}^{\Lll}$. Then $\Lll=\Lll_n\sats$``there are $\widetilde{y}\in V_{\lambda+1}$ and  $\widetilde{z}\in\Lll_\om[k]$
such that $\widetilde{z}=t^{\Lll_\om[k]}(k_{n\om}(\widetilde{y}),x,k)$ and $\varphi(\widetilde{z},x,k)$.
By Claim \ref{clm:in_range_implies_stable},
$j_{0n}(x)=x$ for all $n<\om$;
we also have $j_{0n}(k)=k$ 
and $j_{0n}(k_{0\om})=k_{n\om}$.
So we can pull the statement back to $\Lll=\Lll_0$
under $j_{0n}$, 
giving that $\Lll\sats$``there are $\widetilde{y}\in V_{\lambda+1}$ and $\widetilde{z}\in\Lll_\om[k]$
such that $\widetilde{z}=t^{\Lll_\om[k]}(k_{0\om}(\widetilde{y}),x,k)$ and $\varphi(\widetilde{z},x,k)$''. But taking witnesses $\widetilde{y},\widetilde{z}$ to this,
we have $k_{0\om}(\widetilde{y})\in\rg(j_{0\om})$,
so $\widetilde{z}\in H$, which suffices.}
Let $\eta\in\OR$ with $\Theta<\eta$ and 
$x\in L_\eta(V_{\lambda+1}^{\Lll})\elem_{m+10}\Lll$
and  $j_{0\om}(\eta)=\eta$ (we can find such $\eta$
because $j_{0\om}$ arises from a sequence of ultrapowers).
Then because of how $\Lll_\om[k]$ is defined
in $\Lll$, we have
$(\Lll_\om[k])|\eta\elem_{m+1}\Lll_\om[k]$.
Let
\[ 
J=\Hull^{(\Lll_\om[k])|\eta}
(V_{\lambda_\om+1}^{\Lll_\om}\cup\{x,k\}); \]
then $J\elem(\Lll_\om[k])|\eta$
since $V_{\lambda_\om+1}^{\Lll_\om}\cup\{k\}\sub J$.
Let $C$ be the transitive collapse of $J$
and $\pi:C\to J$ the uncollapse map.
Let $\pi(x^C)=x$ and $\pi(k^C)=k$.

Now we claim that $C,x^C,k^C\in\rg(j_{0\om})$ and $\pi\in H$.
For we defined $C,x^C,k^C,\pi$ over $\Lll_\om[k]$ from the parameters
$\eta=j_{0\om}(\eta)$, $x$
and $k$, so they are in $H$. But $C,x^C,k^C$ are also coded
by elements of 
$V_{\lambda_\om+2}^{\Lll_\om[k]}=V_{\lambda_\om+2}^{\Lll_\om}$
(the equality by Claim \ref{clm:V_lambda_om+2}),
so $C,x^C,k^C\in \Lll_\om\cap H$, so $C,x^C,k^C\in\rg(j_{0\om})$ by part \ref{item:hull_is_conservative_i}.

So write $j_{0\om}(C_0,x^{C_0},k^{C_0})=(C,x^C,k^C)$.
Since $\pi\in H$, we have $\pi``(j_{0\om}``C_0)\sub H$.
But
$\pi$ is elementary,
so $C\sats\exists z\ \varphi(z,x^C,k^C)$,
so $C_0\sats\exists z\ \varphi(z,x^{C_0},k^{C_0})$.
So fixing a $z_0\in C_0$ witnessing this,
then $z=\pi(j_{0\om}(z_0))\in H$ is as desired.
\end{proof}

Since $H\elem\Lll_\om[j_\om]$,
$H$ is extensional,
so we can let $N$ be its 
transitive collapse
and $\widehat{j}_{0\om}:N\to\Lll_\om[j_\om]$ the uncollapse map.

\begin{clm}We have:
\begin{enumerate}[label=\arabic*.,ref=\arabic*]
\item\label{item:N_is_L[j]}$\Lll=L(V_{\lambda+1})^N\sub N=\Lll[j]=\Lll[j\rest\Theta]$,
\item\label{item:j-hat_extends_j} $j_{0\om}\sub 
\widehat{j}_{0\om}:\Lll[j]\to\Lll_\om[j_\om]$
is elementary,
\item \label{item:j_def_from_j_rest_V_lambda_cup_Theta}
 $j$ is definable over $\Lll[j]$ from
$j\rest(V_\lambda^{\Lll}\cup\Theta)$.

\end{enumerate}
\end{clm}
\begin{proof}
Parts \ref{item:N_is_L[j]}
and \ref{item:j-hat_extends_j}:
We have $\Lll=L(V_{\lambda+1})^N\sub N$ and $j_{0\om}\sub\widehat{j}_{0\om}$ by Claim \ref{clm:hull_is_conservative}.
Also, $j_{0\om}(j\rest\alpha)=j_\om\rest j_{0\om}(\alpha)$ for all $\alpha<\Theta$, and $j_{0\om}(\Theta)=\Theta$, so  $\widehat{j}_{0\om}(j\rest\Theta)={j_\om\rest\Theta}$. So by Claim \ref{clm:set_seg_of_*_suffices},
$N=\Lll[j]=\Lll[j\rest\Theta]$.
Part \ref{item:j_def_from_j_rest_V_lambda_cup_Theta}: from $j\rest(V_\lambda^{\Lll}\cup\Theta)$
we can recover $j\rest L_\Theta(V_{\lambda+1}^{\Lll})$, hence $\mu_j$, and hence $j$.
\end{proof}

So the first order theory of $(\Lll[j],\Lll)$
is exactly that of $(\Lll_\om[j_\om],\Lll_\om)$,
and the main theorem now follows immediately from
what we know about $\Lll_\om[j_\om]$.
\end{proof}

 \begin{cor}\label{cor:first_of_main}Let $(\Lll,\lambda,j)$ be a relevant triple with $j$ proper.
Let $k=j\rest V_{\lambda+2}^{\Lll}$.
 Then:
 \begin{enumerate}[label=\arabic*.,ref=\arabic*]\item\label{item:theory_in_L[j]} $\Lll[k]=\Lll[j]$ satisfies ZF + ``$V_{\lambda+2}\sub \Lll=L(V_{\lambda+1})$ and $j:\Lll\to\Lll$ is a proper $I_{0,\lambda}$-embedding'',
 and hence satisfies ``$k:V_{\lambda+2}\to V_{\lambda+2}$ is elementary and $k\sub j$,
 $V_{\lambda+1}^{\#}$ does not exist, and
$\Theta_{V_{\lambda+1}}^{\Lll}=\Theta_{V_{\lambda+1}}$ is regular''.

\item\label{item_DC_lambda_goes_in} If
$V_{\lambda+1}^{\Lll}=V_{\lambda+1}$, $\eta\leq\lambda$,  $\eta$ is a limit
and $\eta$-$\DC$  holds then
$\Lll\sats\eta$-$\DC$ and 
$\Lll[j]\sats\eta$-$\DC$.
 \end{enumerate}
\end{cor}

 \begin{proof}
Part \ref{item:theory_in_L[j]}: These statements are mostly
direct consequences of the definitions and Theorem \ref{tm:main}; however, 
for the regularity of $\Theta$ in $\Lll[j]$,
note we have
\[\Theta=\Theta_{V_{\lambda+1}^{\Lll}}^{\Lll}=\Theta_{V_{\lambda+1}^{\Lll[j]}}^{\Lll[j]}=
\Theta_{V_{\lambda_\om+1}^{\Lll_\om}}^{\Lll_\om}=\Theta_{V_{\lambda_\om+1}^{\Lll_\om[j_\om]}}^{\Lll_\om[j_\om]}\]
and $\Lll_\om[j_\om]\sub \Lll$, so $\Theta$ is regular in $\Lll_\om[j_\om]$,
so by elementarity, $\Theta$ is regular in $\Lll[j]$.
Alternatively,
notice that if $f:\alpha\to\Theta$
is cofinal where $\alpha<\Theta$,
then for each $\beta<\alpha$ we can select the least
$\gamma<\Theta$ such that $f(\beta)<\gamma$
and $\Lll|\gamma=\Hull^{\Lll|\gamma}(V_{\lambda+1}^{\Lll})$,
put the resulting sequence of theories $\Th^{\Ll|\gamma}(V_{\lambda+1}^{\Lll})$
together, and thereby form a new subset of $V_{\lambda+1}^{\Lll}$ in $\Lll[j]$, a contradiction.

Part \ref{item_DC_lambda_goes_in}: This is a standard argument.\footnote{Here is the outline of an alternate argument suggested by the referee. For a set $X$, write $\eta$-$\DC_X$ for the restriction of $\eta$-$\DC$ to trees $\mathscr{T}\sub X^{<\OR}$. Observe that $\eta$-$\DC_X$ implies $\eta$-$\DC_{X\times\alpha}$, for every ordinal $\alpha$. Likewise, $\eta$-$\DC_X$ implies $\eta$-$\DC_Y$ whenever $Y$ is the surjective image of $X$. 
So if every set is the surjective image of $X\times\alpha$ for some ordinal $\alpha$,
then $\eta$-$\DC_X$ implies $\eta$-$\DC$. But $V_{\lambda+1}\sub\Lll[j]$ and in $\Lll[j]$, every set is the surjective image of $V_{\lambda+1}\cross\alpha$ for some ordinal $\alpha$. And $\eta$-$\DC_{V_{\lambda+1}}$ holds in $\Lll[j]$; in fact this is downwards absolute to transitive models $M$ of ZF with $V_{\lambda+1}\sub M$.} Note that we assume  $V_{\lambda+1}^{\Lll}=V_{\lambda+1}$ here.
Let $\eta\leq\lambda$ be a limit and suppose that $\eta$-$\DC$ holds
in $V$.
Let $\mathscr{T}\in\Lll[j]$ be a tree of height $\leq\eta$.
Let $k=j\rest\Theta$.
Because $\Lll[j]=\Lll[k]=L(A)$ for some set $A$,
there is a proper class of ordinals $\xi$ such that $\Lll[j]\sats$``$\xi$ is 
regular''. Fix some such $\xi>\Theta$ with $\mathscr{T}\in 
L_\xi(V_{\lambda+1},k)$. Let
\[ 
H=\Hull_{\Sigma_1}^{L_\xi(V_{\lambda+1},k)}(V_{\lambda+1}\cup\{
k, \mathscr{T}\})\]
(this hull is defined as usual (see \S\ref{subsec:notation}),
except that we restrict to allowing only $\Sigma_1$ formulas $\varphi$).

Now $\Lll[j]\sats$``$H$ is closed under $\eta$-sequences''.
For let $\left<x_\alpha\right>_{\alpha<\eta}\in\Lll[j]$
with each $x_\alpha\in H$.
Working in $\Lll[j]$, select for each $\alpha<\eta$
the least $\Sigma_1$ formula $\varphi$
such that for some finite tuple 
$\vec{y}\in(V_{\lambda+1}\cup\{k,\mathscr{T}\})^{<\om}$,
$x_\alpha$ is the unique $x\in L_\xi(V_{\lambda+1},k)$
such that
$L_\xi(V_{\lambda+1},k)\sats\varphi(x,\vec{y})$, and set $\varphi_\alpha=\varphi$. Now working in $V$,
where we have $\eta$-$\DC$,  choose a sequence 
$\left<\vec{y}_\alpha\right>_{\alpha<\eta}$, such that for each $\alpha<\eta$,
$\vec{y}_\alpha$ witnesses the choice of $\varphi_\alpha$.
Note that $\left<\vec{y}_\alpha\right>_{\alpha<\eta}\in\Lll$.
By the regularity of $\xi$ in $\Lll[j]$ (and upward absoluteness of $\Sigma_1$ 
truth)
there is $\gamma<\xi$ such that 
$L_\gamma(V_{\lambda+1},k)\sats\varphi_\alpha(x_\alpha,\vec{y}_\alpha)$ for each $\alpha<\eta$.
But then from the parameter 
$\left<\varphi_\alpha,\vec{y}_\alpha\right>_{\alpha<\eta}$,
we can recover $\left<x_\alpha\right>_{\alpha<\eta}$
in a $\Sigma_1$ fashion over $L_\xi(V_{\lambda+1},k)$,
by looking for such an ordinal $\gamma$ and objects in 
$L_\gamma(V_{\lambda+1},k)$. Therefore $\left<x_\alpha\right>_{\alpha<\eta}\in 
H$.

Now $H\elem_{\Sigma_1} L_\xi(V_{\lambda+1},k)$, since
\[ 
L_\xi(V_{\lambda+1},k)=\Hull_{\Sigma_1}^{L_\xi(V_{\lambda+1},k)}(V_{\lambda+1}
\cup\xi\cup\{k\}).\]
In particular, $H$ is extensional.
Let $C$ be the transitive collapse of $H$ and $\pi:C\to H$ the uncollapse map.
So $\Lll[j]\sats$``$C$ is closed under $\eta$-sequences''.
Let $\pi(\bar{\mathscr{T}})=\mathscr{T}$. Then since $\crit(\pi)>\lambda$,
$\bar{\mathscr{T}}$ is also a tree of height $\leq\eta$.
So by $\eta$-$\DC$ in $V$, there is a maximal branch $b$ through $\mathscr{T}$. But in $\Lll$, there is a surjection $\sigma:V_{\lambda+1}\to C$,
which implies $b\in\Lll\sub\Lll[j]$, and since $\Lll[j]\sats$``$C$ is closed
under $\eta$-sequences'', $b\in C\sats$``$b$ is maximal 
through $\bar{\mathscr{T}}$'', and it follows that $\pi(b)$ is a maximal branch
through $\mathscr{T}$ in $\Lll[j]$, as desired.

The proof for $\Lll$ instead of $\Lll[j]$ is essentially the same, but 
slightly easier.
\end{proof}

\begin{tm}\label{tm:j_iterable}Let $(\Lll,\lambda,j)$ be relevant, with $j$ proper. Then $\Lll[j]$ is a tight extension of $\Lll$, and  $(\Lll,j)$ is iterable in the sense of both \ref{dfn:iterates} and \ref{dfn:external_iterates}, the latter with respect to $N=\Lll[j]$.\footnote{
In \cite{woodin_sem2},
Woodin proves iterability assuming $\ZFC$ in $V$.}
\end{tm}
\begin{proof}
 We may assume $V_{\lambda+2}\subseteq\Lll$, since we may work inside $\Lll[j]$.

Let us first show that the ultrapower $\Ult_0(\Lll[j],\mu_j)$ 
 is
 wellfounded.
Suppose otherwise.
Let
$\eta\in\OR$ be large, in particular with $\Ult(\Lll[j]|\eta,\mu_j)$ illfounded, and consider
$\Hull^{(\Lll[j]|\eta,j\rest\eta)}(V_{\lambda+1})$.
This hull is fully elementary, and (by choice of $\eta$)
$j\rest\eta$ determines $j\rest L_\eta(V_{\lambda+1})$ and hence $\mu_j$.
Let $C$ be the transitive collapse of the hull. Then 
the illfoundedness reflects into $\Ult_0(C,\mu_j\cap C)$.
(Here we don't need to construct a specific $\in$-descending sequence;
the illfounded part of the ultrapower is computed in a first order fashion,
so
by 
elementarity, functions representing objects in the illfounded part
of $\Ult_0(\Lll[j]|\eta,\mu_j)$ also represent objects in the illfounded part
of $\Ult_0(C,\mu_j\cap C)$.)
As $C$  is the surjective image of $V_{\lambda+1}$, 
we get
$C\in\Lll|\Theta$. But then $\Ult_0(C,\mu_j)$ gets absorbed into $j(C)$, which 
is transitive, a contradiction.

We now verify that
the same ultrapower satisfies \L o\'{s}'s Theorem; the proof is rather similar to that of wellfoundedness.
Fix a formula $\varphi$
and $\eta$ as above and an $f\in\Lll[j]|\eta$ such that
\begin{equation}\label{eqn:Los_thm_hypo} \big[\Lll[j]|\eta\sats\exists x\ \varphi(x,f(u))
\big]\text{ for }\mu_j\text{-almost all }u\in V_{\eta+1}.
\end{equation}
 Let $C$ be as before and $\pi:C\to \Lll[j]|\eta$ the uncollapse map,
 which is elementary.
Let $\pi(\bar{f})=f$. We have  $\pi\rest V_{\lambda+1}=\id$,
and $C\in L_\Theta(V_{\lambda+1})$.
Since  $j\rest L_\Theta(V_{\lambda+1})\in\mathscr{E}(L_\Theta(V_{\lambda+1}))$ 
we can fix $y\in j(C)$ such that
\[ j(C)\sats\varphi(y,j(\bar{f})(j\rest V_\lambda))\]
and fix $g\in L_\Theta(V_{\lambda+1})$
with $g:V_{\lambda+1}\to C$
and $[g]^{\Lll}_{\mu_j}=y$, and  so by \L o\'{s}'s Theorem, 
\[ \big[C\sats\varphi(g(u),\bar{f}(u))\big]
\text{ for }\mu_j\text{-almost all }u\in V_{\lambda+1}.\]
But now working in $\Lll[j]$, let
$g':V_{\lambda+1}\to\Lll[j]|\eta$
be $g'(u)=\pi(g(u))$, and note that
$g'$ witnesses \L o\'s's Theorem
with respect to line (\ref{eqn:Los_thm_hypo}).

So $\Ult_0(\Lll[j],\mu_j)$ is wellfounded and satisfies \L o\'{s}'s Theorem. But then 
since $\mu_j\in \Lll[j]$, the usual 
argument (like that for linearly iterating a normal measure under $\ZFC$) now 
shows that all iterates of $(\Ll[j],\mu_j)$ are wellfounded and the ultrapowers 
satisfy \L o\'{s}'s Theorem, so $(\Ll[j],\mu_j)$ is iterable.

Let $\Lll'[j']=\Ult_0(\Lll[j],\mu_j)$ and $\widehat{j}:\Lll[j]\to\Lll'[j']$ be 
the 
ultrapower map.
Note that $\Lll[j]$ and $\Lll$ have the same functions
 $f:V_{\lambda+1}\to L_\Theta(V_{\lambda+1})$.
So the two ultrapowers $\Ult_0(\Lll,\mu_j)$
and $\Ult_0(\Lll[j],\mu_j)$ agree on the part computed 
with such functions,
and in particular 
\[ L_\Theta(V_{\lambda+1})=j(L_\Theta(V_{\lambda+1}))=\widehat{j}(L_\Theta(V_{
\lambda+1}))\]
and $j\rest L_\Theta(V_{\lambda+1})=\widehat{j}\rest L_\Theta(V_{\lambda+1})$.
We have
$\widehat{j}(\Lll)=\bigcup_{\alpha\in\OR}\widehat{j}(L_\alpha(V_{\lambda+1
} ))$.
Note that $\widehat{j}(\Lll)=\Lll$.
Let
$k:\Ult_0(\Lll,\mu_j)\to\widehat{j}(\Lll)$
be the natural factor map; that 
is,
\[ k([f]^{\Lll,0}_{\mu_j})=[f]^{\Lll[j],0}_{\mu_j}=\widehat{j}(f)(j\rest 
V_\lambda).\]
Then (by \L o\'{s}'s Theorem) $k$ is fully elementary,
and by the agreement of the ultrapower constructions regarding
functions mapping into
$L_\Theta(V_{\lambda+1})$,
we have $\crit(k)\geq\Theta$. But $k$ is computed in $\Lll[j]$,
where $V_{\lambda+1}^\#$ does not exist.
Therefore $k=\id$. So all functions $f\in\Lll[j]$
mapping into $\Lll$ are represented
mod $\mu_j$-measure one by functions $g\in\Lll$,
so $\Lll[j]$ is a tight extension of $\Lll$.\footnote{
An analogous thing holds for many direct limit 
systems of mice, a fact due independently to the author
and to Ralf Schindler, via the argument in \cite{Theta_Woodin_in_HOD}
or in \cite{vm1}. In the current context,
we can use this simpler ``non-existence of $V_{\lambda+1}^\#$''
argument instead, which was also observed independently
by Goldberg.} By Lemma \ref{lem:tight_extension},
 $(\Lll,\mu_j)$ is iterable (including \L o\'{s}'s Theorem for the 
ultrapowers determined by the iterates etc),
and the rest of the theorem follows.
\end{proof}

\section{Embedding extensions and the cofinality of 
$\Theta$}\label{sec:embeddings}

Having proved the main theorem, we now establish some more features
of the model $\Lll[j]$. We begin with an examination of existence and 
uniqueness of extensions of 
elementary embeddings, and the relationship of these issues to $\cof(\Theta)$,
in the case that $V_{\lambda+1}\sub\Lll$;
however, we don't assume $I_{0,\lambda}$ in general. 
So in this section we will consider models of form $\Lll=L(V_{\lambda+1})$, sometimes under the added assumption that $\lambda$ is even and $I_{0,\lambda}$ holds.
We carry on using notation as in the previous section. 
First an observation which follows easily from the calculations
with ultrapowers in the previous section. 

\begin{lem}\label{lem:Lll_no_Sigma_1-elem_k}
Let $\lambda$ be an even ordinal, $\Lll=L(V_{\lambda+1})$ and $\Theta=\Theta_{V_{\lambda+1}}^{\Lll}$. Then in $\Lll$, there is  no
$\Sigma_1$-elementary
$k:L_\Theta(V_{\lambda+1})\to L_\Theta(V_{\lambda+1})$.
\end{lem}
\begin{proof}
We may assume $V=\Lll$. Let $\mu_k$ be the measure 
derived from $k$ with seed $k\rest 
V_\lambda$.
Then $\Ult(V,\mu_k)$ is wellfounded and satisfies \L o\'{s}'s Theorem as in the proof  of Theorem \ref{tm:j_iterable}. But then $V=L(V_{\lambda+1})=\Ult(V,\mu_k)$,
and the ultrapower map is definable from the parameter $\mu_k$,
contradicting Suzuki \cite{suzuki_no_def_j}.
\end{proof}

\begin{dfn}
Let $\lambda$ be  even, $\Lll=L(V_{\lambda+1})$ and $\Theta=\Theta_{V_{\lambda+1}}^{\Lll}$. Work in $\Lll$. Let 
$j\in\mathscr{E}_{\nt}(V_\lambda)$.
 Then $T_{j}$ denotes the partial order consisting of bounded approximations to an
 $\ell\in\mathscr{E}_1(L_\Theta(V_{\lambda+1}))$ extending $j$.
The conditions of $T_j$ are elementary maps 
\[ \ell':L_{\alpha+1}(V_{\lambda+1})\to L_{\beta+1}(V_{\lambda+1}) \]
with $j\sub\ell'$,
for some 
ordinals $\alpha,\beta<\Theta$,
ordered by $\ell_1'\leq\ell_0'$ iff $\ell_0'\sub\ell_1'$.

Let $T_{j,\alpha}$ be the $\alpha$th derivative of $T_{j}$ given by 
iteratively removing
nodes $t$ for which there is a bound on the domains of extensions of $t$.
That is, (i) $T_{j,0}=T_j$,
 (ii) given $T_{j,\alpha}$, then
 $T_{j,\alpha+1}$
 is the set of all $t\in T_{j,\alpha}$
 such that for all $\beta<\Theta$
  there is $s\in T_{j,\alpha}$ with $t\sub s$ and
$\beta\sub\dom(s)$,
and (iii) $T_{j,\eta}=\bigcap_{\alpha<\eta}T_{j,\alpha}$ for limit $\eta$.
Let $T_{j,\infty}=\bigcap_{\alpha\in\OR}T_{j,\alpha}$.

Now work in $V$. Say a \emph{cofinal branch through $T_j$}
is a set $b\sub T$ which is linearly ordered by $\leq$, with $\Theta\sub\dom(\bigcup b)$.
And \emph{cofinal branch through $T_{j,\infty}$}
is likewise, but with $b\sub T_{j,\infty}$.
\end{dfn}

\begin{rem}\label{rem:branches_and_embeddings}
 Recall by Proposition \ref{prop:extensions},
 maps $k\in\mathscr{E}_1(V_{\lambda+2}^{\Lll})$ extending $j$ are in one-to-one correspondence with
 maps $\ell\in\mathscr{E}_1(L_\Theta(V_{\lambda+1}^{\Lll}))$, with $k$ corresponding to $\ell$ just when $k\sub \ell$.
 Also by \ref{prop:extensions}, any such $k,\ell$ are  in fact fully elementary.
 
 Note for any $b$, $b$ is a cofinal branch through $T_j$ iff $b$ is  cofinal through $T_{j,\infty}$.
 
Let $b$ be cofinal  through $T_{j,\infty}$
and $\ell=\bigcup b$. Then note that $\ell\in\mathscr{E}_1(L_\Theta(V_{\lambda+1}^{\Lll}))$ 
and $j\sub\ell$. Conversely,
if $j\sub\ell\in\mathscr{E}_1(L_\Theta(V_{\lambda+1}^{\Lll}))$, then there is a unique cofinal branch $b$ with $\bigcup b=\ell$,
and in particular, $T_{j,\infty}\neq\emptyset$.
\end{rem}

\begin{lem}\label{lem:embedding_descriptions}
 Adopt notation as above. Working in $V$, we have:
 \begin{enumerate}[label=\arabic*.,ref=\arabic*]
 \item\label{item:Lll_no_branch_thru_T} $\Lll$ has  no cofinal branch through $T_j$, nor $T_{j,\infty}$.
  \item\label{item:T_j_infty_perfect} If $T_{j,\infty}\neq\emptyset$
 then for every $t\in T_{j,\infty}$ and every $\alpha<\Theta$,
 there are $s_1,s_2\in T_{j,\infty}$ with $t\sub s_1,s_2$ and 
$\alpha\leq\dom(s_1),\dom(s_2)$ and $s_1\not\sub s_2\not\sub s_1$.
 \item\label{item:when_cof(Theta)=om} If $T_{j,\infty}\neq\emptyset$
and $\cof(\Theta)=\om$ then there are at least $\Theta^\om$-many pairwise distinct $k\in\mathscr{E}_1(V_{\lambda+2}^\Lll)$
extending $j$; this holds in particular if $V_{\lambda+1}^\#$ exists.
\end{enumerate}
\end{lem}

\begin{proof}
Part \ref{item:Lll_no_branch_thru_T}:
By Remark \ref{rem:branches_and_embeddings} and
 Lemma \ref{lem:Lll_no_Sigma_1-elem_k}.

Part \ref{item:T_j_infty_perfect}: Fix $t\in T_{j,\infty}$ and $\alpha<\Theta$. The existence of $s_1\in T_{j,\infty}$ with $\alpha\sub\dom(s_1)$
follows immediately from the definitions. The existence of such an  $(s_1,s_2)$ then
follows from the fact that $T_{j,\infty}\in\Lll$, but $\Lll$ has no cofinal branch
through $T_{j,\infty}$.

Part \ref{item:when_cof(Theta)=om}:
Fix a strictly increasing cofinal function $f:\om\to\Theta$
such that
\[ L_{f(n)+1}(V_{\lambda+1}^{\Lll})=\Hull^{L_{f(n)+1}(V_{\lambda+1}^{\Lll})}(V_{\lambda+1}^{\Lll})\]
for each $n<\om$. 
Let $T'$ be the tree of of attempts to build a function
$k:\rg(f)\to\Theta$
such that for each $n<\om$,
$j\cup (k\rest(f``(n+1)))$ extends to an elementary
\[ \ell:L_{f(n)+1}(V_{\lambda+1}^{\Lll})\to L_{k(f(n))+1}(V_{\lambda+1}^{\Lll}). \]
Note that $j\cup (k\rest(f``(n+1)))$  determines $\ell$
(using that $j$ determines $\ell\rest V_{\lambda+1}$ as the canonical extension).
The nodes of $T'$ should be functions with domain $f``m$ for some $m<\om$.
Note that infinite branches through $T'$ are in one-one correspondence with
$\Sigma_1$-elementary embeddings $L_\Theta(V_{\lambda+1}^{\Lll})\to L_\Theta(V_{\lambda+1}^{\Lll})$.

Let $T''$ be the set of nodes $t\in T'$ such that there is $s\in T_{j,\infty}$
with $t=s\rest\dom(t)$. Then note that $T''\neq\emptyset$ and $T''$ is perfect
(for every $t\in T''$ there are $s_1,s_2\in T''$
with $t\sub s_1, s_2$ but $s_1\not\sub s_2\not\sub s_1$);
otherwise,
letting $U=(T_{j,\infty})_t$
(the sub-tree of
 $T_{j,\infty}$ consisting
of all nodes extending $t$),
we have $U\in\Lll$, and it has
a unique cofinal branch $b$,
so $b\in\Lll$, contradicting part \ref{item:Lll_no_branch_thru_T}.

But the set of nodes in $T''$ is wellorderable.
 So $T''$ has $\Theta^\om$-many
infinite branches, because for every $t\in T''$
there are $\Theta$-many $s\in T''$ with $t\sub s$.
\end{proof}

\begin{cor}\label{cor:in_Lll[j]}
Let $(\Lll,\lambda,j)$ be relevant and $\Theta=\Theta^{\Lll}_{V_{\lambda+1}^{\Lll}}$.
Then: 
\begin{enumerate}[label=\arabic*.,ref=\arabic*]
\item\label{item:i_extends_to_at_most_one_k_again} Work in $\Lll[j]$. For every elementary $i:V_{\lambda}\to V_{\lambda}$,
there is at most one extension of $i$ to a $\Sigma_1$-elementary 
$k:V_{\lambda+2}^\Lll\to V_{\lambda+2}^\Lll$.
 \item\label{item:k:V_lambda+2_to_V_lambda+2} Work in $\Lll[j]$. Let $k:V_{\lambda+2}^\Lll\to 
V_{\lambda+2}^\Lll$ be $\Sigma_1$-elementary. Then:
\begin{enumerate}[label=\tu{(}\alph*\tu{)}]\item $k$ extends uniquely to a fully elementary $k':\Lll\to\Lll$,
\item there is a unique pair $(\widehat{k},\widetilde{j})$
such that
$\widehat{k}:\Lll[j]\to\Lll[\widetilde{j}]$ is
elementary and $k\sub\widehat{k}$; fixing this $(\widehat{k},\widetilde{j})$, we have
\item $k'\sub\widehat{k}$,
\item $k',\widehat{k}$ are the ultrapower maps induced by $\mu_k$ \tu{(}note $k'$ is proper\tu{)},
\item 
$\widehat{k}(j)=\bigcup\{k'(\bar{j})\bigm|\bar{j}\in\Lll\wedge\bar{j}\sub 
j\}$.
\end{enumerate}
\item\label{item:when_cof(Theta)=om_tm} Work in $V$ \tu{(}we have $\Lll[j]\sub V$\tu{)}. Then:
\begin{enumerate}[label=\tu{(}\alph*\tu{)}]\item $j$ is the unique extension of $j\rest V_\lambda^{\Lll}$ to a proper embedding $\Lll\to\Lll$,\item  if $\cof(\Theta)=\omega$ then
there are $\Theta^\om$-many distinct extensions of $j\rest V_\lambda^{\Lll}$ to a $\Sigma_1$-elementary $k:V_{\lambda+2}^{\Lll}\to V_{\lambda+2}^{\Lll}$.
\end{enumerate}
\end{enumerate}
\end{cor}
\begin{proof}
Recall that by \ref{tm:main}, $V_{\lambda+2}^{\Lll[j]}=V_{\lambda+2}^{\Lll}$ and $\Lll[j]\sats$``$V_{\lambda+1}^{\#}$ does not exist''.
Part \ref{item:i_extends_to_at_most_one_k_again}:
Since $\Theta$ is regular in $\Lll[j]$, this is by Proposition \ref{prop:Zipper_analogue}.
For part \ref{item:k:V_lambda+2_to_V_lambda+2} argue  as 
in the proof of Theorem \ref{tm:j_iterable}. 
Part \ref{item:when_cof(Theta)=om}: The first clause
is by  Proposition \ref{prop:extensions},
the second by Lemma \ref{lem:embedding_descriptions} part \ref{item:when_cof(Theta)=om_tm}.
\end{proof}

The next and final result in this section contrasts with Corollary \ref{cor:in_Lll[j]} part \ref{item:when_cof(Theta)=om_tm},
considering the situation when $\cof(\Theta)>\om$:

\begin{tm}\label{tm:extensions_when_cof(Theta)>om} Assume ZF, let $\lambda$ be even, $\Lll=L(V_{\lambda+1})$ and $\Theta=\Theta_{V_{\lambda+1}}^{\Lll}$. Suppose $\cof(\Theta)>\om$.
Then:
\begin{enumerate}[label=\arabic*.,ref=\arabic*]
 \item\label{item:i_extends_to_at_most_one_k} Every $i\in\mathscr{E}(V_{\lambda})$ extends to at most one $k\in\mathscr{E}_1(L_\Theta(V_{\lambda+1}))$.
 \item\label{item:k_extends_to_exactly_one_j} 
 Every $k\in\mathscr{E}_1(L_\Theta(V_{\lambda+1}))$
 extends uniquely to some $j\in\mathscr{E}(\Lll)$.
\end{enumerate}
\end{tm}
\begin{proof}
By Remark \ref{rem:sharp_and_cof(Theta)}, $V_{\lambda+1}^{\#}$ does not exist.

Part \ref{item:i_extends_to_at_most_one_k}:
This is just by Theorem \ref{prop:Zipper_analogue}
part \ref{item:cof(Theta)=om}.

Part \ref{item:k_extends_to_exactly_one_j}:
Uniqueness is by Proposition \ref{prop:extensions}.
For existence we use the ultrapower map 
$j:\Lll\to \Ult_0(\Lll,\mu_k)$.
This ultrapower satisfies \L o\'{s}'s Theorem as in the proof of Theorem \ref{tm:j_iterable}, so we just need to see it is 
wellfounded. So suppose not. Let $\eta\in\OR$
be such that $\eta>\theta$ and $U=\Ult_0(M,\mu_k)$ is illfounded, where $M=\Lll|\eta$.
Then in fact, $\mathscr{O}=\Ult_{M}(\eta,\mu_k)$ is illfounded,
where this denotes the ultrapower of $\eta$
formed with functions $f:V_{\lambda+1}\to\eta$ where $f\in M$.
For we can map $U$
into $\mathscr{O}$ by sending $[f]$ to $[\rank\com 
f]$,
noting that if $[f]\in^U[g]$ then $[\rank\com f]<^{\mathscr{O}}[\rank\com g]$.
When necessary, we write $[f]^{\eta,M}_{\mu}$
for the element of $\mathscr{O}$ represented by $f$.
We write $\illfp(\mathscr{O})$ for the illfounded part of $\mathscr{O}$.
Note $\mathscr{O}$ and $\illfp(\mathscr{O})$
are computed in $V$, where we have $\mu_k$,
but we do not assume they are computed in $\mathscr{L}$.

We will now mimic the usual proof of linear iterability,
identifying a reasonable notion of ``least''
element of $\illfp(\mathscr{O})$, use this to identify a canonical descending 
sequence through $\mathscr{O}$, and then show that we can reflect this into the 
action of $k$ for a contradiction. Recall that we do not assume $\AC$ in $V$. The identification of the 
descending sequence will not require the assumption that $\cof(\Theta)>\om$;
this only comes up later in the reflection.

So let $(t_0,\xi_0)$ be the lexicographically
least pair $(t,\xi)$ such that $t$ is a term and $\xi<\eta$ and there is
$y\in V_{\lambda+1}$ such that $t^M(y,\xi)$ is some $f\in M$ such that
$f:V_{\lambda+1}\to\eta$ and $[f]\in\illfp(\mathscr{O})$.
(The minimality of 
$(t_0,\xi_0)$
is computed in $V$.) Let
\[ H_0=\Hull^M(V_{\lambda+1}\cup\{\xi_0\}),\]
so $H_0\elem M$. Let $C_0$ be the transitive collapse of $H_0$ 
and $\pi_0:C_0\to M$ the uncollapse.
So $H_0,C_0,\pi_0\in\Lll$,
and $C_0\in L_\Theta(V_{\lambda+1})$.
Let Let $\bar{\eta}_0=\OR^{C_0}$
and $\pi_0(\bar{\xi}_0)=\xi_0$. 
Note $\crit(\pi_0)=\Theta^{C_0}=\Theta\cap H_0$
and $\pi_0(\Theta^{C_0})=\Theta$.
Let $\mathscr{O}_0=\Ult_{C_0}(\bar{\eta}_0,\mu_k\cap C_0)$
(the notation is like that used in defining $\mathscr{O}$ above).
Since $\mu_k\cap C_0\in\Lll$, we have $\mathscr{O}_0\in\Lll$,
and note that $\mathscr{O}_0$ is wellfounded,
because we get an order-preserving map $\mathscr{O}_0\to\OR(k(C_0))$, defined 
 \[[f]^{\bar{\eta}_0,C_0}_{\mu_k\cap C_0}\mapsto k(f)(k\rest V_\lambda).\]
So we identify
$\mathscr{O}_0$ with its ordertype, so $\mathscr{O}_0\in\OR$. Let 
$\gamma_0$ be the least $\gamma<\mathscr{O}_0$ with
\[ \exists y\in 
V_{\lambda+1}\ \left(\gamma=[t_0^{C_0}(\bar{\xi}_0,y)]^{\bar{\eta}_0,C_0}_{\mu_k\cap C_0}
\text{ and }[t_0^{M}(\xi_0,y)]^{\eta,M}_{\mu_k}\in\illfp(\mathscr{O})\right).\]
Let $Y_0$ be the set of all such $y$.
So for any such parameters $y,y'$,
we have 
\[ [t_0^{C_0}(\bar{\xi}_0,y)]^{\bar{\eta}_0,C_0}_{\mu_k\cap 
C_0}=\gamma_0=[t_0^{C_0}(\bar{\xi}_0,y')]^{\bar{\eta}_0,C_0}_{\mu_k\cap C_0}\]
and
\[ 
[t_0^{M}(\xi_0,y)]^{\eta,M}_{\mu_k}=
[t_0^{M}(\xi_0,y')]^{\eta,M}_{\mu_k}\in\illfp(\mathscr{O}).\]
Now choose $(t_1,\xi_1)$ as for $(t_0,\xi_0)$,
except we also require that 
\[ 
[t^M(\xi,y)]^{\eta,M}_{\mu_k}<^{\mathscr{O}}[t_0^M(\xi_0,y')]^{\eta,M}_{\mu_k}
\text{ for any/all }y'\in Y_0.\]
Then define 
$H_1=\Hull^M(V_{\lambda+1}\cup\{\xi_0,\xi_1\})$,
and define $C_1,\pi_1,\mathscr{O}_1$ like before.
Let $\bar{\eta}_1=\OR^{C_1}$ and $\pi_1(\bar{\xi}^1_0,\bar{\xi}^1_1)=(\xi_0,\xi_1)$.
Let 
$\gamma_1$ be the least $\gamma<\mathscr{O}_1$ such that 
there is $y\in 
V_{\lambda+1}$ 
such that
\begin{itemize}
 \item[--]
$\gamma=[t_1^{C_1}(\bar{\xi}^1_1,y)]^{\bar{\eta}_1,C_1}_{\mu_k\cap 
C_1}$,
\item[--] $[t_1^{M}(\xi_1,y)]^{\eta,M}_{\mu_k}\in\illfp(\mathscr{O})$ and 
\item[--] $[t_1^M(\xi_1,y)]^{\eta,M}_{\mu_k}<[t_0^M(\xi_0,y')]^{\eta,M}_{\mu_k}
\text{ for any/all }y'\in Y_0$.
\end{itemize}
Let $Y_1$ be the set of all such $y$.
Proceed in this manner through all finite stages.
So letting
\[\gamma^{\mathscr{O}}_n= [t_n^M(\xi_n,y)]^{\eta,M}_{\mu_k}
 \text{ for any/all }y\in Y_n,
\]
we have $\gamma_{n+1}^{\mathscr{O}}<^{\mathscr{O}}\gamma_n^{\mathscr{O}}$
for all $n<\om$.

Now let $H=\Hull^M(V_{\lambda+1}\cup\{\xi_n\}_{n<\om})=\bigcup_{n<\om}H_n$.
We don't immediately seem to know that $H\in\Lll$. But still, $H\elem M$ 
and in $V$ we can form
the transitive collapse $C$, and let $\pi:C\to M$ be the uncollapse.
Then $C=L_\alpha(V_{\lambda+1})$
for some ordinal $\alpha$, so $C\in\Lll$.
And note  $\Theta^C=\sup_{n<\om}\Theta^{C_n}$,
and since $\Theta^{C_n}<\Theta$ for each $n<\om$, and $\cof(\Theta)>\om$, we have $\Theta^C<\Theta$
(this is the one key point in the proof of existence of $j$ where we need this assumption).
But
there are cofinally
many $\beta<\Theta$ such that $\Lll|\beta=\Hull^{\Lll|\beta}(V_{\lambda+1})$,  and since 
$C\sats$``$\Theta^C=\Theta_{V_{\lambda+1}}$'',
 there is no such $\beta\in[\Theta^C,\OR^C)$.
Therefore $\OR^C<\Theta$ and $C\in L_\Theta(V_{\lambda+1})$.

Now $\Ult_{C}(\OR^C,\mu_k\cap C)$ is illfounded by construction,
but like before, we can embed this ultrapower into
$\OR(k(C))$ in an order-preserving manner, a contradiction.
\end{proof}

Does $\ZF+I_{0,\lambda}+\cof(\Theta^{L(V_{\lambda+1})}_{V_{\lambda+1}})=\om$
imply  $V_{\lambda+1}^\#$ exists?
Similarly, does this theory together with $\AC$ imply
$V_{\lambda+1}^\#$ exists? (Having $\Theta$ singular does routinely imply  
$\pow(V_{\lambda+1})\not\sub L(V_{\lambda+1})$.) It seems one might arrange that $\cof(\Theta)=\om$
by forcing over $\Lll[j]$, without changing $V_{\lambda+1}$.

\section{$\Lll_\om[j_\om]=\HOD^{\Lll}(\Lll_\om)$ is a ground of 
$\Lll$}\label{sec:HOD}

Much work in inner model theory
has been oriented around the analysis
of the $\HOD$ of determinacy models in terms of hod mice
or strategy mice, such as
$M_\infty[\Sigma]$, for an
appropriate direct limit of mice $M_\infty$ and iteration strategy $\Sigma$.
(A related strategy mouse, $M_1[\Sigma]$,
was already mentioned in the sketch of the main proof, earlier in the paper.)
There has also been analogous analyses of the 
$\HOD$ of certain generic extensions of mice.
Here we will prove an analogue, showing that $\HOD^{\Lll}(M_\om)=M_\om[j_\om]$,
and hence $\HOD^{\Lll}\sub M_\om[j_\om]$
and $M_\om[j_\om]$ is closed under ordinal definability in $\Lll$.
The arguments are mostly straightforward adaptations of the usual ones to our 
context, and there is such an adaptation in
\cite{reinhardt_non-definability}. 

Throughout this section we deal with a relevant triple $(\Lll,\lambda,j)$ with $j$ proper,
and a proper class $\Gamma\sub\OR\cut(\lambda+1)$ of fixed points of $j$,
with $\Gamma=\mathscr{I}^{\Lll}$
if $(V_{\lambda+1}^{\Lll})^{\#}$ exists.
We also adopt the other associated notation introduced in \S\S\ref{sec:elementary_lambda+2},\ref{sec:main}.

Recall Claims \ref{clm:in_range_implies_stable}
and \ref{clm:j_om,om+om=*_over_Lll_om}
of the proof of Theorem \ref{tm:main},
and the map $x\mapsto x^*$ (with domain $\Lll_\om$) given just after
Claim \ref{clm:in_range_implies_stable}. We first extend these ideas a little further.

\begin{dfn}
Let $x\in\Lll$. We say that $x$ is \emph{eventually stable}
iff there is $n<\om$ such that $j_n(x)=x$ (and hence $j_m(x)=x$ for all 
$m\in[n,\om)$). We say that $x$ is \emph{hereditarily eventually
stable} (\emph{hes}) iff every $y\in\trancl(\{x\})$ is eventually stable.

We now extend the earlier $*$-map to all eventually stable $x$:
given such an $x$ and $s\in\vec{\OR}$ with $x\in H^s$, let
\[ x^*=\lim_{n\to\om}\pi^{j\rest 
V_\lambda^{\Lll}}_{sn,\om}(x)=\lim_{m\to\om}j_{m\om}(x),\]
so $x^*\in\Lll_\om$.
\end{dfn}

\begin{lem}
 $\Lll$ can define the notions \emph{eventually stable}, \emph{hes},
 and the map $x\mapsto x^*$ \tu{(}with domain the class of all eventually stable $x$\tu{)}, from the 
parameter $j_n\rest V_\lambda^{\Lll}$, uniformly in $n<\om$.
\end{lem}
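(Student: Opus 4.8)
The plan is to reduce all three notions to the covering‑system apparatus built in Section~\ref{sec:main}, which $\Lll$ already defines uniformly from a parameter of the form $j_n\rest V_\lambda$. First I would recall the relevant facts: writing $k=j_n\rest V_\lambda$, the iterates $k_m=j_{n+m}\rest V_\lambda$ are obtained from $k$ by iterated canonical extension, hence are definable over $V_{\lambda+1}$ uniformly (by \cite{cumulative_periodicity}); $\Lll$ then defines from $k$ the class $\mathscr{P}^k$, the hulls $H^s$ (which depend only on $s$, not on $n$), the finite‑stage maps $\pi^{k,s}_{m,m'}\colon H^s\to H^s$, the direct limit $\widetilde{\Lll}^k_\om=\Lll_\om$, and the direct limit maps $\pi^{k,s}_{m\om}\colon H^s\to\Lll_\om$; and for $x\in H^s$ with $s$ being $(k,m)$-truly-stable one has $\pi^{j_n\rest V_\lambda,s}_{m,m'}(x)=j_{n+m,n+m'}(x)$ and $\pi^{j_n\rest V_\lambda,s}_{m\om}(x)=j_{n+m,\om}(x)$. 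I would note that true‑stability of $s$ is a condition only on the finitely many ordinals in $s$, referring to their behaviour under the tail iteration, so --- like the ordinal $*$-map and ordinal stability, already shown $\Lll$-definable from $j_n\rest V_\lambda$ --- it is so definable, uniformly in $n$. Since ``eventually stable'', ``hes'' and $x\mapsto x^*$ refer only to the maps $j_m$ for $m\geq n$ and to the direct limit, it then suffices to express them through these ingredients.

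The central step is the characterization: $x\in\Lll$ is eventually stable if and only if there are $m<\om$ and $s\in\vec{\OR'}$ with $x\in H^s$, $s$ being $(j_n\rest V_\lambda,m)$-truly-stable, and $\pi^{j_n\rest V_\lambda,s}_{m,m+1}(x)=x$. For the forward direction I would use that, if $j_{m'}(x)=x$ for all $m'\geq m_0$, then (since $\Lll=\Hull^{\Lll}(V_{\lambda+1}\cup\Gamma)$) there is $s\in\vec{\Gamma}$ with $x\in H^s$; the ordinals of $s$ lie in $\Gamma$ and are therefore fixed by every iteration map, so $s$ is truly‑stable, and choosing $m$ with $n+m\geq m_0$ gives $\pi^{j_n\rest V_\lambda,s}_{m,m+1}(x)=j_{n+m}(x)=x$. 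For the converse, from such $s$ and $m$ I get $j_{n+m}(x)=j_{n+m,n+m+1}(x)=\pi^{j_n\rest V_\lambda,s}_{m,m+1}(x)=x$, and then $j_{m'}(x)=x$ for all $m'\geq n+m$ by pushing this statement forward along the iteration, so $x$ is eventually stable. As every clause on the right‑hand side is uniformly $\Lll$-definable from $j_n\rest V_\lambda$, this yields a uniform $\Lll$-definition of the class of eventually stable elements. The notion ``hes'' is then immediate: $x$ is hes if and only if every $y\in\trancl(\{x\})$ is eventually stable, and $\trancl$ is absolute.

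For the $*$-map, given an eventually stable $x$ I would fix $s,m$ as in the characterization and set $x^*=\pi^{j_n\rest V_\lambda,s}_{m\om}(x)\in\Lll_\om$; by the agreement above this equals $j_{n+m,\om}(x)$, which equals $\lim_{m'\to\om}j_{m'\om}(x)$ because $j_{m'}(x)=x$ for all $m'\geq n+m$, so it matches the definition of $x^*$ stated just before the lemma; I would also check independence from the choice of $(s,m)$. Since $\Lll_\om$ and the maps $\pi^{j_n\rest V_\lambda,s}_{m\om}$ are uniformly $\Lll$-definable from $j_n\rest V_\lambda$, so is $x\mapsto x^*$. The step I expect to be the main obstacle is verifying precisely that the $\Lll$-internal finite‑stage covering maps $\pi^{j_n\rest V_\lambda,s}_{m,m+1}$ agree with the genuine class iteration maps $j_{n+m,n+m+1}$ on the hulls $H^s$ for truly‑stable $s$ --- this is implicit in Section~\ref{sec:main} but needs to be stated carefully --- whereas the uniformity in $n$ comes essentially for free, since every object invoked depends only on the tail of the iteration from stage $n$ onward.
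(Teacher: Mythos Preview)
Your approach is the same as the paper's---reduce everything to the covering system---and your argument for the $*$-map and for \emph{hes} is fine. There is, however, a real gap in your characterization of \emph{eventually stable}.

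You write that $x$ is eventually stable iff there exist $m$ and $s$ with $s$ \emph{truly}-stable (i.e.\ $j_{n+m'}(s)=s$ for all $m'\geq m$), $x\in H^s$, and $\pi^{j_n\rest V_\lambda,s}_{m,m+1}(x)=x$. The biconditional itself is correct, and your ``pushing forward'' argument for the backward direction is valid. The problem is the definability claim: you assert that true-stability of $s$ is $\Lll$-definable from $j_n\rest V_\lambda$, appealing to ``ordinal stability, already shown $\Lll$-definable''. But the paper never shows that the relation ``$\alpha$ is $(j_0,\ell)$-stable'' is $\Lll$-definable; only the $*$-map $\alpha\mapsto\alpha^*$ is shown definable. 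Computing $j_{n+m}(\alpha)$ for a given $m$ inside $\Lll$ would require access to $\mu_{n+m}$ or to $\Gamma$, neither of which is available (recall $\Gamma=\mathscr{I}^\Lll$ in the sharp case, which is certainly not $\Lll$-definable). So your right-hand side is not, as stated, an $\Lll$-formula in the parameter $j_n\rest V_\lambda$.

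The paper avoids this by weakening the hypothesis on $s$ to mere \emph{potential}-stability (that is, $(s,m)\in\mathscr{P}^{j_n\rest V_\lambda}$, which is definable by construction) and compensating by strengthening the fixing condition to $\pi^{j_n\rest V_\lambda,s}_{m\ell}(x)=x$ for \emph{all} $\ell\geq m$. The backward direction then goes through because any potentially-stable $s$ eventually becomes truly-stable at some $m_1\geq m$ (by the Subclaim that every ordinal is eventually stable), and from that stage on the covering maps agree with the genuine iteration maps, so the all-$\ell$ hypothesis yields $j_{n+\ell}(x)=x$ for $\ell\geq m_1$. Your single-step condition $\pi_{m,m+1}(x)=x$ together with only potential-stability would not suffice, since before stage $m_1$ the map $\pi_{m,m+1}$ need not equal $j_{n+m}\rest H^s$. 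Replacing your characterization with the paper's version closes the gap and otherwise leaves your outline intact.
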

\begin{proof}
 A set $x\in\Lll$ is eventually stable iff there is  
$(s,m)\in\mathscr{P}^{j_n\rest V_\lambda^{\Lll}}$
such that $x\in H^s$ and
$\pi^{j_n\rest V_\lambda^{\Lll}}_{sm,s\ell}(x)=x$
 for all $\ell\in[m,\om)$.
The map $x\mapsto x^*$ is then clearly definable.
\end{proof}

Recall $j_{\om,\om+\om}:\Lll_\om\to\Lll_{\om,\om+\om}$ is the iteration map.
\begin{dfn}
Define $\widehat{j}_{\om,\om+\om}:\Lll_\om[j_\om]\to\Lll_{\om+\om}[j_{\om+\om}]$
as the unique elementary extension of $j_{\om,\om+\om}$;
by Theorem \ref{tm:main}, this makes sense and is the iteration map.
\end{dfn}

\begin{lem}
Let $x\in\Lll$. Then $x\in \Lll_\om[j_\om]$ iff $x$ is hes. Moreover, if $x\in \Lll_\om[j_\om]$ then 
$\widehat{j}_{\om,\om+\om}(x)=x^*$.
\end{lem}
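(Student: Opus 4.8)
The plan is to treat the two directions separately, establishing the formula $\widehat j_{\om,\om+\om}(x)=x^*$ together with the harder (``if'') direction. For the ``only if'' direction, suppose $x\in\Lll_\om[j_\om]$ and $x\in\Lll$. Since both $\Lll$ and $\Lll_\om[j_\om]$ are transitive, it is enough to show that \emph{every} element of $\Lll_\om[j_\om]\cap\Lll$ is eventually stable, as then $\trancl(\{x\})\subseteq\Lll_\om[j_\om]\cap\Lll$ forces $x$ to be hereditarily eventually stable. So fix $x\in\Lll_\om[j_\om]\cap\Lll$. Using $\Lll_\om[j_\om]=\Lll_\om[*]=L(V^{\Lll_\om}_{\lambda_\om+1},*)$, write $x$ as the unique object definable over this model from an ordinal $\eta$, a parameter $a\in V^{\Lll_\om}_{\lambda_\om+1}$, and the predicate $*$. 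By the subclaim in the iterability proof every ordinal is eventually stable, so we may choose $n<\om$ with $\eta$ being $(j_0,n)$-stable; and since $V^{\Lll_\om}_{\lambda_\om+1}$ is the direct limit of $V_{\lambda+1}$ under the maps $j_m\rest V_{\lambda+1}$, we may take $n$ large enough that also $a\in\rg(j_{n\om})$. Then for every $m\in[n,\om)$ the map $j_{nm}\colon\Lll\to\Lll$ fixes $\Lll_\om$ and $*$ (these being definable over $\Lll$ from $j_n\rest V_\lambda$ uniformly in $n$), fixes $\eta$, and fixes $a$ by Claim~\ref{clm:in_range_implies_stable}; applying $j_{nm}$ to the defining property of $x$ yields $j_{nm}(x)=x$. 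Hence $x$ is eventually stable, as required.

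For the ``if'' direction and the formula, I would argue by $\in$-recursion on hereditarily eventually stable $x\in\Lll$, simultaneously showing $x\in\Lll_\om[j_\om]$ and $\widehat j_{\om,\om+\om}(x)=x^*$. Given such $x$, every $y\in x$ is again hes, so the recursion hypothesis gives $y\in\Lll_\om[j_\om]$ and $\widehat j_{\om,\om+\om}(y)=y^*$. Since $x$ is eventually stable, $x^*=j_{n\om}(x)$ for all large $n$, so $x^*\in\Lll_\om\subseteq\Lll_\om[j_\om]$; the task is to recover $x$ from $x^*$ inside $\Lll_\om[j_\om]$. For this I would run the hull-inversion argument of Claim~\ref{clm:hull_is_conservative}(i), shifted upward by one iteration block: arrange (as in the main proof) that $\Gamma$ is fixed by all the iteration maps, so $\Gamma^*=\Gamma$; fix $s\in\vec{\Gamma}$ with $x\in(H^s)^\Lll$ and an $n$ with $j_{nm}(x)=x$ for $m\ge n$. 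Then $j_{n\om}\rest(H^s)^\Lll$ is the unique elementary map $(H^s)^\Lll\to(H^s)^{\Lll_\om}$ extending $j_{n\om}\rest V_\lambda$ and fixing $s^-$, and applying $j_{n\om}$ to it identifies $j_{\om,\om+\om}\rest(H^s)^{\Lll_\om}$ — exactly as in Claim~\ref{clm:hull_is_conservative}(i) — as an element of $\Lll_\om$, equal to $\widehat j_{\om,\om+\om}\rest(H^s)^{\Lll_\om}$, hence available over $\Lll_\om[j_\om]$. Since $x^*\in(H^s)^{\Lll_\om}$, inverting this set map along the identification supplied by the recursion recovers $x$ inside $\Lll_\om[j_\om]$ and yields $\widehat j_{\om,\om+\om}(x)=x^*$.

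I expect the ``if'' direction to be the main obstacle: one must make precise that $x$ — which in general need not itself lie in $\Lll_\om$ — is genuinely recovered from $x^*\in\Lll_\om$ by a map available over $\Lll_\om[j_\om]$, i.e.\ that the relevant restriction of $\widehat j_{\om,\om+\om}$ to the $\Lll_\om$-hull $(H^s)^{\Lll_\om}$ is itself a member of $\Lll_\om$ (equivalently, definable over $\Lll_\om[j_\om]$ from $j_\om\rest\Theta$) and that inverting it sends $x^*$ to $x$. Concretely this amounts to transporting the whole hull apparatus of Claim~\ref{clm:hull_is_conservative} — and the identification of $\Lll[j]$ with the collapsed hull — one iteration-block higher, replacing $(\Lll,j,j_{0\om})$ by $(\Lll_\om,j_\om,\widehat j_{\om,\om+\om})$; the ``only if'' direction and all the definability bookkeeping are then routine given Section~\ref{sec:main}.
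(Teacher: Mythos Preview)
Your ``only if'' direction is correct and is essentially the paper's argument. The problems are in the other direction.

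\textbf{The hull-inversion does not recover $x$.} You propose to invert the set map $\bar{j}=j_{\om,\om+\om}\rest(H^s)^{\Lll_\om}\in\Lll_\om$ on $x^*$. But $\dom(\bar{j})=(H^s)^{\Lll_\om}\subseteq\Lll_\om$, so $\bar{j}^{-1}(x^*)$ (or $\bar{j}^{-1}[x^*]$) lands in $\Lll_\om$ --- whereas a general hes $x$ need not lie in $\Lll_\om$, only in $\Lll_\om[j_\om]$. So ``inverting this set map \ldots\ sends $x^*$ to $x$'' is false in general, and the phrase ``along the identification supplied by the recursion'' does not repair it: the recursion hypothesis $\widehat{j}_{\om,\om+\om}(y)=y^*$ concerns the \emph{class} map $\widehat{j}_{\om,\om+\om}$ on $\Lll_\om[j_\om]$, not the set map $\bar{j}$. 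The paper's fix is simpler than anything involving hulls: since $\mu_\om\in\Lll_\om[j_\om]$, the full iteration map $\widehat{j}_{\om,\om+\om}$ is definable over $\Lll_\om[j_\om]$, and one checks directly that
\[
x=\{\,y\in\Lll_\om[j_\om]:\widehat{j}_{\om,\om+\om}(y)\in x^*\,\},
\]
using the induction hypothesis for $\supseteq$ and, for $\subseteq$, choosing $n$ large enough that both $x$ and $y$ are $(j_0,n)$-stable so that $y^*\in x^*$ becomes $j_{n\om}(y)\in j_{n\om}(x)$, whence $y\in x$ by elementarity.

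\textbf{The formula $\widehat{j}_{\om,\om+\om}(x)=x^*$ is not established.} You claim it ``yields'' from the inversion, but the induction hypothesis only gives $\widehat{j}_{\om,\om+\om}[x]\subseteq x^*$, and $\widehat{j}_{\om,\om+\om}(x)$ is not determined by $\widehat{j}_{\om,\om+\om}[x]$. The paper proves the formula separately, \emph{before} the rank induction, by the natural extension of your ``only if'' calculation: push the defining equation for $x$ not through $j_{nm}$ (staying in $\Lll$) but through $j_{n\om}$ (landing in $\Lll_\om$), so that the $\Lll$-statement ``$x$ is the unique $x'$ with $(\widetilde{\Lll}_\om[\widetilde{j}_\om])^{j_n\rest V_\lambda}\sats\varphi(x',y,\eta,\ldots)$'' becomes the $\Lll_\om$-statement that $x^*$ is the corresponding unique element of $\Lll_{\om+\om}[j_{\om+\om}]$; comparing with what elementarity of $\widehat{j}_{\om,\om+\om}$ gives for $\widehat{j}_{\om,\om+\om}(x)$ yields the formula. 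Once you have the formula for all of $\Lll_\om[j_\om]$, the rank induction needs only the membership clause.
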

\begin{proof}
We first show that every element of $\Lll_\om[j_\om]$ is hes. Each element of $\Lll_\om$ is eventually stable, by Claim \ref{clm:in_range_implies_stable} of the proof of Theorem \ref{tm:main}.
And $j_\om\rest\Theta$ is eventually stable, by Claim \ref{clm:set_seg_of_*_suffices}
 of the same proof.
 Also by that claim, $\Lll_\om[j_\om]=\Lll_\om[j_\om\rest\Theta]$, so each $x\in\Lll_\om[j_\om]$
is definable over $\Lll$ from some $z\in\Lll_\om$ and $j_\om\rest\Theta$ and some $\alpha\in\OR$. It follows that each such
$x$ is eventually stable, and since $\Lll_\om[j_\om]$ is transitive, each such $x$ is in fact hes.

Let $x\in\Lll_\om[j_\om]$.
We now show that $x^*=\widehat{j}_{\om,\om+\om}(x)$.
Let $y\in V_{\lambda_\om+1}^{\Lll_\om}$ and $\eta\in\OR$
and $\varphi$ be a formula such that 
\begin{equation}\label{eqn:x_def} \Lll_\om[j_\om]\sats\text{``}x\text{ is the unique }x'\text{ such that 
}\varphi(x',y,\eta,j_\om\rest\Theta)\text{''}.\end{equation}
Applying $\widehat{j}_{\om,\om+\om}$ to this statement and letting $\widehat{x}=\widehat{j}_{\om,\om+\om}(x)$, we get
\begin{equation}\label{eqn:x-hat_def} \Lll_{\om+\om}[j_{\om+\om}]\sats
 \text{``}\widehat{x}\text{ is the unique }x'\text{ such that 
}\varphi(x',y^*,\eta^*,j_{\om+\om}\rest\Theta)\text{''};\end{equation}
this is because
$\widehat{j}_{\om,\om+\om}$
extends $j_{\om,\om+\om}$ elementarily, and since $j_{\om,\om+\om}(\eta,y)=(\eta^*,y^*)$ by Claim \ref{clm:j_om,om+om=*_over_Lll_om}
of the proof of \ref{tm:main}.

We want to use line (\ref{eqn:x-hat_def}) to see that
$\widehat{x}=x^*$.

Let $n<\om$ be such that $y\in\rg(j_{n\om})$
and $j_n(\eta)=\eta$. Then  $j_n(y)=y$
and $j_n({j_\om\rest\Theta})={j_\om\rest\Theta}$ by Claim \ref{clm:in_range_implies_stable}, so  
by line (\ref{eqn:x_def}), it follows that $j_n(x)=x$. And
letting $k=j_n\rest V_\lambda^{\Lll}$, we have ($\Lll=\Lll_n$ and)
\[ \Lll_n\sats\text{``}x=\text{  unique 
}x'\in(\widetilde{\Lll}_\om[\widetilde{j}_\om])^k\text{ s.t.~} 
(\widetilde{\Lll}_\om[\widetilde{j}_\om])^k\sats\varphi(x',y,\eta, 
(\widetilde{j}_\om)^k\rest\Theta)\text { '' } , \]
where ``s.t.'' means ``such that'' and $(\widetilde{\mathscr{L}}_\om)^k$ means 
the version of $\Lll_\om$ given by
iterating  $k$, and $(\widetilde{\mathscr{L}}_\om[\widetilde{j}_\om])^k$ 
likewise (as computed in $\Lll=\Lll_n$ here; so in this case
it happens that these are just $\Lll_\om$ and $\Lll_\om[j_\om]$,
but the point is that we are referring to the definitions of these classes
from the parameter $k$). So
letting $\ell=j_\om\rest V_{\lambda_\om}^{\Lll_\om}$, 
\[ \Lll_\om\sats\text{``}x^*=\text{  unique 
}x'\in(\widetilde{\Lll}_\om[\widetilde{j}_\om])^{\ell}\text{ s.t.~}
(\widetilde{\Lll}_\om[\widetilde{j}_\om])^{\ell}\sats\varphi(x',y^*,\eta^*,
(\widetilde{j}
_\om)^\ell\rest\Theta)\text{''}\]
(as $\Theta^*=\Theta$). But
$((\widetilde{\Lll_\om})^{\ell})^{\Lll_\om}=\Lll_{\om+\om}$
and $((\widetilde{j_\om})^{\ell})^{\Lll_\om}=j_{\om+\om}$, and so consulting line (\ref{eqn:x-hat_def}), it follows that $x^*=\widehat{x}$, as desired.

We now prove by induction on rank that every hes $x\in\Lll$
is in $\Lll_\om[j_\om]$. So fix a hes $x\in\Lll$ with $x\sub \Lll_\om[j_\om]$.
Then $x^*\in\Lll_\om\sub\Lll_\om[j_\om]$.
But $x\sub\Lll_\om[j_\om]$, and since $*$ agrees
with $k=\widehat{j}_{\om,\om+\om}$ over $\Lll_\om[j_\om]$,
$\Lll_\om[j_\om]$ can compute $x=\{y\bigm|k(y)\in x^*\}$,
so $x\in\Lll_\om[j_\om]$, as desired.
\end{proof}

In the following theorem, we write $\OD_X$ for
the class of all $x$ such that $x$ is definable
 from (finitely many) parameters in $\OR\cup X$;
note that we officially do not allow $X$ as a predicate here,
but in the case below, that will make no difference.
We then define $\HOD_X$ from $\OD_X$ just as $\HOD$ is defined from $\OD$.

\begin{tm}\label{tm:HOD^Lll_Lll_om} Let $(\Lll,\lambda,j)$ be relevant with $j$ proper, and adopt the usual notation. Then $\HOD^{\Lll}_{\Lll_\om}=\HOD^{\Lll}_{\Lll_\om[j_\om]}=
\Lll_\om[j_\om]$.\end{tm}
\begin{proof}
We first show that $\HOD^{\Lll}_{\Lll_\om[j_\om]}\sub\Lll_\om[j_\om]$.
Proceeding 
by induction on rank, let $A\in\HOD^{\Lll}_{\Lll_\om[j_\om]}$
be such that $A\sub\Lll_\om[j_\om]$;
it suffices to see that $A\in\Lll_\om[j_\om]$.
Let $p\in\Lll_\om[j_\om]$ and $\varphi$ be a formula such that
for $x\in\Lll_\om[j_\om]$, we have
\[ x\in A\iff \Lll\sats\varphi(p,x).\]
Then since $p$ and all such $x$ are hes, we get
\[ x\in A\iff 
\Lll_\om\sats\varphi(p^*,x^*)\iff\Lll_\om\sats\varphi(\widehat{j}_{\om,\om+\om}
(p),\widehat{j}_{
\om,\om+\om}(x)), \]
so $A\in\Lll_\om[j_\om]$.

So to prove the theorem, it just suffices
to see that $j_\om\rest\Theta$ is definable over $\Lll$
from the parameter $(V_{\lambda_\om+1}^{\Lll_\om},j_\om\rest 
V^{\Lll_\om}_{\lambda_\om})$.
But in $\Lll$,
$j_\om\rest L_\Theta(V_{\lambda_\om+1}^{\Lll_\om})$ is the unique 
$k\in\mathscr{E}_1(L_\Theta(V_{\lambda_\om+1}^{\Lll_\om}))$
extending $j_\om\rest V_{\lambda_\om+1}^{\Lll_\om}$,
by Proposition \ref{prop:Zipper_analogue}, and in particular, part \ref{item:cof(Theta)=om} of that proposition.
\end{proof}

Using the preceding result, we next observe that $\Lll_\om[j_\om]$
is a (set-)ground for $\Lll$, via Vopenka 
forcing
$\Vop$ for adding $j_{0\om}``V_\lambda^{\Lll}$ generically to $\Lll_\om[j_\om]$.
Moreover, we will analyze aspects of this forcing.

\begin{dfn}\label{dfn:Vop}
 Work in $\Lll$. Let $\widetilde{\Vop}$ be the partial order whose elements are 
the non-empty sets $A\sub\pow(V_{\lambda_\om}^{\Lll_\om})$
which are $\OD_{\Lll_\om}$, with ordering $\widetilde{\leq}$ being $\sub$. (Note here that
 $A\in\Lll$, so $A\sub\Lll$ (as we are working in $\Lll$ for the current definition), but it is not required that  $A\sub\Lll_\om$.)
Let $\Vop'$ be the following partial order.
The conditions are tuples $p=(a,\gamma,\eta,\varphi)$ with 
$a\in V_{\lambda_\om+1}^{\Lll_\om}$, $\gamma<\eta\in\OR$
with $\lambda_\om+1<\eta$,
and $\varphi$ a formula. Given such a tuple $p$, let
\[ A_p=A_{(a,\gamma,\eta,\varphi)}=\{x\sub 
V_{\lambda_\om}^{\Ll_\om}\bigm|\Lll|\eta\sats
\varphi(x,a,\gamma,V_{\lambda_\om+1}^{\Lll_\om})\},\]
so $A_p\in\widetilde{\Vop}$.
The ordering of $\Vop'$
is then $p\leq' q$ iff 
$A_p\sub A_q$.
So if $\Vop'/\equiv$ is isomorphic to $\widetilde{\Vop}$,
where $\equiv$ is the usual equivalence relation
($p\equiv q$ iff $p\leq' 
q\leq' p$).
Finally let $\Vop$ be the restriction of $\Vop'$
to tuples $p\in L_\Theta(V_{\lambda_\om+1}^{\Lll_\om})$,
and $\leq^{\Vop}$ its order.
\end{dfn}

\begin{lem}\label{lem:Vop_props} Work in $\Lll$. We have:
\begin{enumerate}[label=\arabic*.,ref=\arabic*]
 \item\label{item:Vop_suffices} 
 $\Vop/\equiv$ is isomorphic to $\widetilde{\Vop}$.
\item\label{item:Vop_def} $\Vop$ is definable from parameters over 
$(L_\Theta(V_{\lambda_\om+1}^{\Lll_\om}),j_\om\rest\Theta)$.
\item\label{item:conditions_for_y} The set of pairs $(y,p)\in 
V_{\lambda_\om}^{\Lll_\om}\cross\Vop$
such that $y\in x$ for every $x\in A_p$, is definable over 
$(L_\Theta(V_{\lambda_\om+1}^{\Lll_\om}),j_\om\rest\Theta)$.
\item\label{item:Vop_cc} Let  $A$ be an antichain of $\Vop$.
Then $A\in L_\alpha(V_{\lambda+1})$ for some $\alpha<\Theta$,
and if $A\in\Lll_\om[j_\om]$
then 
$A\in L_\alpha(V_{\lambda_\om+1}^{\Lll_\om})$ for some $\alpha<\Theta$.
\end{enumerate}
\end{lem}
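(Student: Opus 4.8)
The plan is to verify the four parts in order, using that $\Lll = \HOD^\Lll(\Lll_\om[j_\om])$ (so every $\OD(\Lll_\om)$ set is captured by a formula and ordinal parameters) together with the fact from the previous theorem that $j_\om\rest\Theta$ is definable over $\Lll$ from $(V_{\lambda_\om+1}^{\Lll_\om}, j_\om\rest V_{\lambda_\om}^{\Lll_\om})$, and that $L_\Theta(V_{\lambda_\om+1}^{\Lll_\om})$ is a $\Sigma_1$-elementary... no, rather that there is a proper class of $\alpha<\Theta$ with $L_\alpha(V_{\lambda_\om+1}^{\Lll_\om})$ its own hull of $V_{\lambda_\om+1}^{\Lll_\om}$ (Theorem \ref{tm:Zipper_analogue}\ref{item:cofinally_many_pwd}, or the argument in the proof of \ref{tm:main}\ref{item:Theta_regular}), and that $\Theta$ is regular in $\Lll_\om[j_\om]\sub\Lll$.

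First, for part \ref{item:Vop_suffices}: every $A\in\widetilde{\Vop}$ is $\OD(\Lll_\om)$, hence (since $\Lll=\HOD^\Lll(\Lll_\om)$, and using that $\Lll_\om$ is coded into $V_{\lambda_\om+1}^{\Lll_\om}$ together with the definable map $j_\om\rest\Theta$) of the form $A=\{x\subseteq V_{\lambda_\om}^{\Lll_\om}\mid \Lll\models\varphi(x,a,\gamma,V_{\lambda_\om+1}^{\Lll_\om})\}$ for some formula $\varphi$, some $a\in V_{\lambda_\om+1}^{\Lll_\om}$ and some $\gamma\in\OR$; by reflecting $\varphi$ to some $\Lll|\eta$ with $\eta$ large (and $\eta$ above the relevant parameters) we get $A=A_{(a,\gamma,\eta,\varphi)}$, so $A\in\operatorname{ran}$ of the map $p\mapsto A_p$ on $\Vop'$. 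To cut down to $\Vop$ (tuples in $L_\Theta(V_{\lambda_\om+1}^{\Lll_\om})$), use the condensation/hull argument: form $H=\Hull^{\Lll|\eta}(V_{\lambda_\om+1}^{\Lll_\om}\cup\{a,\gamma,\eta'\})$ for suitable $\eta'$ and take the transitive collapse, which is some $L_{\bar\eta}(V_{\lambda_\om+1}^{\Lll_\om})$ with $\bar\eta<\Theta$ (since it is a surjective image of $V_{\lambda_\om+1}^{\Lll_\om}$, arguing as in \ref{tm:Zipper_analogue}\ref{item:cofinally_many_pwd}); the collapsed tuple $\bar p\in L_\Theta$ defines the same set $A$ because $V_{\lambda_\om+1}^{\Lll_\om}$ is fixed pointwise by the uncollapse and $\Sigma_1$-truth about subsets of $V_{\lambda_\om}^{\Lll_\om}$ is absorbed below $\Theta$. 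Then $p\mapsto A_p$ descends to an order-isomorphism $\Vop/{\equiv}\to\widetilde\Vop$.

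Part \ref{item:Vop_def} is then immediate: the set of tuples $p\in L_\Theta(V_{\lambda_\om+1}^{\Lll_\om})$ and the relation $p\leq^{\Vop}q$ (i.e.\ $A_p\subseteq A_q$, i.e.\ ``for all $x\subseteq V_{\lambda_\om}^{\Lll_\om}$, $\varphi_p(x,\ldots)\to\varphi_q(x,\ldots)$'' evaluated in the appropriate $\Lll|\eta$'s, which are elements of $L_\Theta(V_{\lambda_\om+1}^{\Lll_\om})$) are first-order expressible over $(L_\Theta(V_{\lambda_\om+1}^{\Lll_\om}),j_\om\rest\Theta)$ — the predicate $j_\om\rest\Theta$ is needed only in case one wants to name $\Lll_\om$-related parameters, but it is harmless. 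Part \ref{item:conditions_for_y} is the same kind of routine reflection: ``$y\in x$ for every $x\in A_p$'' unfolds to ``for all $x\subseteq V_{\lambda_\om}^{\Lll_\om}$ in $\Lll|\eta_p$, $\varphi_p(x,a_p,\gamma_p,V_{\lambda_\om+1}^{\Lll_\om})\to y\in x$'', and $\Lll|\eta_p\in L_\Theta(V_{\lambda_\om+1}^{\Lll_\om})$, so this is definable over that structure (with $j_\om\rest\Theta$ available).

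Part \ref{item:Vop_cc} is the main obstacle — it is a chain-condition/antichain-boundedness argument. Let $\mathcal A\in\Lll$ be an antichain of $\Vop$, so $\mathcal A\subseteq L_\Theta(V_{\lambda_\om+1}^{\Lll_\om})$. The plan is: since $\Lll=\HOD^\Lll(\Lll_\om[j_\om])$ and $\Vop\in L_\Theta(V_{\lambda_\om+1}^{\Lll_\om})$ is definable with the parameter $j_\om\rest\Theta$, the set $\mathcal A$, being a subset of $\Vop$ in $\Lll$, is itself a surjective image of $V_{\lambda_\om+1}^{\Lll_\om}$ within $\Lll$ via some map definable over $\Lll$ (using that antichains of $\Vop$ are, up to $\equiv$, antichains of $\widetilde\Vop$, hence sets of pairwise-disjoint subsets of $V_{\lambda_\om}^{\Lll_\om}$, hence of size $\leq |\pow(V_{\lambda_\om}^{\Lll_\om})^{\Lll}|$ many equivalence classes, each with a canonical $L_\Theta$-representative of least rank by part \ref{item:Vop_suffices}'s collapse argument — so $\mathcal A$ (up to $\equiv$) is coded by a subset of $V_{\lambda_\om+1}^{\Lll_\om}$). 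Therefore the canonical representatives of members of $\mathcal A$ form a subset of $L_\Theta(V_{\lambda_\om+1}^{\Lll_\om})$ that is a surjective image of $V_{\lambda_\om+1}^{\Lll_\om}$; since $\Theta=\Theta_{V_{\lambda_\om+1}}^{\Lll_\om}$ is regular in $\Lll_\om[j_\om]$, hence (as $\Lll_\om[j_\om]\subseteq\Lll$ and $\Theta$ computes the same way) the ranks of these representatives are bounded below $\Theta$, say by $\alpha<\Theta$, giving $\mathcal A\subseteq L_\alpha(V_{\lambda_\om+1}^{\Lll_\om})$ (after replacing each member by its canonical representative; one then notes the original $\mathcal A\in\Lll$ is recovered, and $\mathcal A\in L_\alpha(V_{\lambda_\om+1}^{\Lll_\om})$ as well by absoluteness of the coding). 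The delicate point to get right is that ``$\mathcal A$ is (coded by) a surjective image of $V_{\lambda_\om+1}^{\Lll_\om}$'': this is exactly where one uses that $\Vop/{\equiv}\cong\widetilde\Vop$ and that distinct elements of an antichain in $\widetilde\Vop$ correspond to disjoint subsets of $V_{\lambda_\om}^{\Lll_\om}$, so an antichain injects into $\pow(V_{\lambda_\om}^{\Lll_\om})^\Lll$, which is a surjective image of $V_{\lambda_\om+1}^{\Lll_\om}$ in $\Lll$ by definition of $\Theta$; composing gives the required surjection, definable over $\Lll$, and regularity of $\Theta$ finishes it.
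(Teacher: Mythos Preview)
Your argument for part~\ref{item:Vop_suffices} is essentially the paper's, and your part~\ref{item:Vop_cc} has the right idea (pairwise incompatible $p,q$ have $A_p\cap A_q=\emptyset$, so the antichain is a surjective image of $\pow(V_{\lambda_\om}^{\Lll_\om})$, hence of $V_{\lambda+1}$, in $\Lll$), though you should appeal directly to the definition of $\Theta=\Theta_{V_{\lambda+1}}^{\Lll}$ rather than to regularity of $\Theta$ in $\Lll_\om[j_\om]$: the antichain is only assumed to lie in $\Lll$, so the cofinal function you would get is in $\Lll$, and regularity in the smaller model $\Lll_\om[j_\om]$ is not what you need.

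There is a genuine gap in parts~\ref{item:Vop_def} and~\ref{item:conditions_for_y}. You assert that the structures $\Lll|\eta=L_\eta(V_{\lambda+1})$ (for $\eta<\Theta$) are elements of $L_\Theta(V_{\lambda_\om+1}^{\Lll_\om})$, so that ``$A_p\sub A_q$'' can be evaluated there directly. This is false: $L_\eta(V_{\lambda+1})$ contains the full $V_{\lambda+1}$, but $L_\Theta(V_{\lambda_\om+1}^{\Lll_\om})\sub\Lll_\om$, and $V_{\lambda+1}\not\sub\Lll_\om$. The defining clause for $A_p$ refers to truth in $L_\eta(V_{\lambda+1})$, so ``$A_p\sub A_q$'' is a statement about $L_\Theta(V_{\lambda+1})$, not about $L_\Theta(V_{\lambda_\om+1}^{\Lll_\om})$, and you have given no mechanism for transferring it. The predicate $j_\om\rest\Theta$ is not ``harmless'' here but essential: from it (together with $j_\om\rest V_{\lambda_\om}^{\Lll_\om}$) one computes $j_{\om,\om+\om}\rest L_\Theta(V_{\lambda_\om+1}^{\Lll_\om})$, i.e.\ the $*$-map on that set. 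Since the parameters $p,p_1,V_{\lambda_\om+1}^{\Lll_\om}$ all lie in $\Lll_\om$ and are hence eventually stable, elementarity of the iteration maps gives
\[
L_\Theta(V_{\lambda+1})\models\psi(p,p_1,V_{\lambda_\om+1}^{\Lll_\om})
\iff
L_\Theta(V_{\lambda_\om+1}^{\Lll_\om})\models\psi(p^*,p_1^*,(V_{\lambda_\om+1}^{\Lll_\om})^*),
\]
and the right side is now expressible over $(L_\Theta(V_{\lambda_\om+1}^{\Lll_\om}),j_\om\rest\Theta)$. This transfer via the $*$-map is the missing step; without it, the definability claims in parts~\ref{item:Vop_def} and~\ref{item:conditions_for_y} do not go through.
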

\begin{proof}
 Part \ref{item:Vop_suffices}: Given any $p\in\Vop'$, we can take a hull to 
replace
 $p$ with an equivalent condition $q$ below $\Theta$,
 and note then that $q\in L_\Theta(V_{\lambda_\om+1}^{\Lll_\om})$.
 (Note here that each subset of $V_{\lambda_\om}^{\Lll_\om}$
 is coded by an element of $V_{\lambda+1}^{\Lll}$.)
 
Part \ref{item:Vop_def}: Working over 
$L_\Theta(V_{\lambda_\om+1}^{\Lll_\om})$,
 from 
$j_\om\rest V_{\lambda_\om}^{\Lll_\om}$ and the predicate 
$j_\om\rest\Theta$,
 we can compute $j_\om\rest L_\Theta(V_{\lambda_\om+1}^{\Lll_\om})$,
 and hence 
$j_{\om,\om+\om}\rest L_\Theta(V_{\lambda_\om+1}^{\Lll_\om})$,
by iterating the former embedding. Now for $p,p_1\in\Vop$
with $p=(a,\gamma,\eta,\varphi)$
and $p_1=(a_1,\gamma_1,\eta_1,\varphi_1)$, the following are equivalent:
\begin{enumerate}\item[--]
$p\leq^{\Vop}p_1$, \item[--] $A_{p}\sub A_{p_1}$, \item[--]
 $L_\Theta(V_{\lambda+1}^{\Lll})\sats$``for all $x\sub 
V_{\lambda_\om}^{\Lll_\om}$,
\[ L_\eta(V_{\lambda+1})\sats
\varphi(x,a,\gamma,V_{\lambda_\om+1}^{\Lll_\om})\implies
L_{\eta_1}(V_{\lambda+1})\sats
\varphi_1(x,a_1,\gamma_1,V_{\lambda_\om+1}^{\Lll_\om})\text{''},\]
\item[--]  $L_\Theta(V_{\lambda_\om+1}^{\Lll_\om})\sats$``for all $x\sub 
(V_{\lambda_\om}^{\Lll_\om})^*$,
\[L_{\eta^*}(V_{\lambda_\om+1})\sats
\varphi(x,a^*,\gamma^*,(V_{\lambda_\om+1}^{\Lll_\om})^*)\implies
L_{\eta_1^*}(V_{\lambda_\om+1})\sats
\varphi_1(x,a_1^*,\gamma_1^*,(V_{\lambda_\om+1}^{\Lll_\om})^*)\text{''}\]
(recalling that we extended $*$ to $\Lll_\om$, in fact to $\Lll_\om[j_\om]$, 
above).
\end{enumerate}
Since $*$ 
agrees with $j_{\om,\om+\om}$, this gives the claimed definability of 
$\leq^{\Vop}$, assuming the set of conditions of $\Vop$ is likewise definable. But note that $\Vop\sub L_\Theta(V_{\lambda_\om+1}^{\Lll_\om})$, and that for all $p=(a,\gamma,\eta,\varphi)\in L_\Theta(V_{\lambda_\om+1}^{\Lll_\om})$ of the right form,
we have $p\in\Vop$ iff $A_p\neq\emptyset$, and the requirement that $A_p\neq\emptyset$
can be defined in a manner similar to that for $\leq^{\Vop}$ above.

Part \ref{item:conditions_for_y}: This is much like the previous part.

Part \ref{item:Vop_cc}: Note that if $A\in\Lll$ is an antichain of $\Vop$
then for all $p,q\in A$ with $p\neq q$ we have $A_p\cap A_q=\emptyset$.
But then if the elements of $A$ are constructed unboundedly in 
$L_\Theta(V_{\lambda_\om+1}^{\Lll_\om})$,
then $\Lll$ would have a surjection from $V_{\lambda+1}^{\Lll}$ to $\Theta$,
a contradiction. This yields both clauses.
\end{proof}

\begin{tm} Let $(\Lll,\lambda,j)$ be relevant with $j$ proper, and adopt the usual notation. Then
 $\Lll_\om[j_\om]$ is a  ground of $\Lll$,
 via the forcing $\PP=\Vop$ \tu{(}as in \ref{dfn:Vop}\tu{)}.\footnote{Goldberg has also 
communicated to 
the author that $\Lll_\om[j_\om]$
is also a ground of $\Lll$ via a certain kind of Prikry forcing.} Moreover:
 \begin{enumerate}[label=\arabic*.,ref=\arabic*]
  \item\label{item:Vop_def_2}  $\PP$ is definable from parameters over  the structure
$(L_\Theta(V_{\lambda_\om+1}^{\Lll_\om}),j_\om\rest\Theta)$.
\item\label{item:Vop_antichains_in_Lll_om[j_om]} Every antichain of $\PP$ in $\Lll_\om[j_\om]$
is the surjective image of $V_{\lambda_\om+1}^{\Lll_\om}$ in 
$\Lll_\om[j_\om]$.
\item\label{item:Vop_antichains_in_Lll} Every antichain of $\PP$
in $\Lll$ is the surjective image of $V_{\lambda+1}^{\Lll}$ in $\Lll$.
\item\label{item:Vop_every_cond} There is $p_0\in\PP$ such that every  $p\leq p_0$ is contained
in an $\Lll_\om[j_\om]$-generic filter $G$ such that $\Lll_\om[j_\om][G]=\Lll$.
 \end{enumerate} 
\end{tm}
\begin{proof} Parts \ref{item:Vop_def_2},
\ref{item:Vop_antichains_in_Lll_om[j_om]} and \ref{item:Vop_antichains_in_Lll} are directly by Lemma \ref{lem:Vop_props}.

Now let $x\in\Lll$ with $x\sub V_{\lambda_\om}^{\Lll_\om}$. 
We can now proceed with the usual Vopenka argument  to show that $x$ induces 
filter $G_x$ which is $(\Lll_\om[j_\om],\Vop)$-generic,
with $x\in\Lll_\om[j_\om][G_x]$.
That is, let
\[ G_x=\{p\in\Vop\bigm|x\in A_p\}.\]
Then $G_x$ is easily a filter. For genericity, let $D\in\Lll_\om[j_\om]$, 
$D\sub\Vop$, be dense. Then by Theorem \ref{tm:HOD^Lll_Lll_om}, there is  $p\in\Vop$
such that $A_p=\bigcup_{q\in D}A_q$ (that is, by \ref{tm:HOD^Lll_Lll_om},
 $D$ is $\OD_{\Lll_\om}^{\Lll}$,
 so $\bigcup_{q\in D}A_q$ is in $\widetilde{\Vop}$, so we get a $p$ as desired in $\Vop$). So if $A_p\psub 
\pow(V_{\lambda_\om}^{\Lll_\om})\cap\Lll$ then there is also $p'\in\Vop$ with 
$A_{p'}=(\pow(V_{\lambda_\om}^{\Lll_\om})\cap\Lll)\cut A_p$. But then
note that $p'$ is incompatible with every $q\in D$, a contradiction.
So $A_p=\pow(V_{\lambda_\om}^{\Lll_\om})\cap\Lll$, so
there is $q\in D$ with $x\in A_q$, so $G_x\cap D\neq\emptyset$,
as desired.

To see that $x\in\Lll_\om[j_\om][G_x]$,
use part \ref{item:conditions_for_y} of Lemma \ref{lem:Vop_props}.

Now applying this with $x=j_{0\om}``V_\lambda^{\Lll}$,
note that in $\Lll_\om[j_\om][G_x]$,
we can compute $V_\lambda^{\Lll}$ and $j_{0\om}\rest V_\lambda^{\Lll}$,
and from these data, given any $X\sub V_\lambda^{\Lll}$,
since $j_{0\om}(X)\in\Lll_\om$, we can then invert and compute $X$.
So $\Lll_\om[j_\om][G_x]=\Lll$.

It just remains to verify part \ref{item:Vop_every_cond}.
Let $\Vop_{\mathrm{hom}}$ be the restriction of $\Vop$
below the condition $p_0$, where
\[ A_{p_0}=\big\{x\in\pow(V_{\lambda_\om}^{\Lll_\om})\cap\Lll\bigm|V_{\lambda+1}^{\Lll}\sub 
L_\Theta(V_{\lambda_\om+1}^{\Lll_\om},x)\big\}, \]
noting that $A_{p_0}$ is appropriately definable in $\Lll$.
Then by the foregoing discussion,
for every $p\in\Vop_{\mathrm{hom}}$
 there is easily $x\in\pow(V_{\lambda_\om}^{\Lll_\om})\cap\Lll$
 such that $p\in G_x$ and $\Lll_\om[j_\om][G_x]=\Lll$
 (in fact, any $x\in A_p$ works).
\end{proof}

\begin{rem}
It follows that $\Lll_\om[j_\om]$ can  compute
the theory of $\Lll$ in parameters in $\Lll_\om[j_\om]$,
by consulting what is forced  by $\Vop_{\mathrm{hom}}$.
(But we had already observed that 
$\HOD^{\Lll}_{\Lll_\om[j_\om]}=\Lll_\om[j_\om]$.)

Forcing with the full $\Vop$, we might have sets  $x\in\Lll$ such that $x\sub 
V_{\lambda_\om}^{\Lll_\om}$ and $x\notin\Lll_\om[j_\om]$
but $\Lll_\om[j_\om][G_x]\neq\Lll$.
In this case we get 
\[ \Lll_\om[j_\om][G_x]=\HOD^{\Lll}_{\Lll_\om[j_\om]\cup\{x\}},
\]
by basically the usual calculations. (Note that every element
of the model on the right can be coded by a subset of $\Lll_\om[j_\om]$;
these all get into $\Lll_\om[j_\om][G_x]$ by the usual argument.)
\end{rem}
In contrast to the fact that $\Lll_\om[j_\om]=\HOD^{\Lll}_{\Lll_\om[j_\om]}$:

\begin{tm}
We have:
\begin{enumerate}[label=\arabic*.,ref=\arabic*]
 \item\label{item:HOD(L_om[j_om])} $\Lll_\om[j_\om]\psub 
\HOD^{\Lll[j]}_{\Lll_\om[j_\om]}$.
 \item\label{item:pow(Theta)_in_HOD} 
$\pow(\Theta)\cap\HOD^{\Lll[j]}\not\sub\Lll$.
\end{enumerate}

\end{tm}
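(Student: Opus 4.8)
The plan is to work inside $\Lll[j]$, where by Theorem~\ref{tm:main}(\ref{item:theory_in_L(j)},\ref{item:j_iterable}) the whole $\mu_j$-iteration is available internally: $j_0$ and $\mu_j$ are definable over $\Lll[j]$ from $j^-$, and $\Lll[j]$ correctly computes the iterates $\left<\Lll_n[j_n]\right>_{n\leq\om}$, the direct limit $\Lll_\om[j_\om]$, and the iteration maps $j_{0\om}\colon\Lll\to\Lll_\om$, $j_{12}\colon\Lll\to\Lll$, $\widehat{j}_{0\om}\colon\Lll[j]\to\Lll_\om[j_\om]$, all definable over $\Lll[j]$ from $j^-$. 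Recall also that $\Lll_\om[j_\om]\psub\Lll$ (it is a nontrivial Vopenka ground of $\Lll$ by \S\ref{sec:HOD}; alternatively $V_{\lambda+1}^\Lll\notin\Lll_\om[j_\om]$, since otherwise $\Lll=L(V_{\lambda+1})\sub\Lll_\om[j_\om]$), and that $V_{\lambda_\om+1}^{\Lll_\om}$, $j_\om\rest V_{\lambda_\om}^{\Lll_\om}$ and $j_\om\rest\Theta$ are all sets lying in $\Lll_\om[j_\om]$; from them $\Lll[j]$ recovers the classes $\Lll_\om=L(V_{\lambda_\om+1}^{\Lll_\om})$ and $\Lll_\om[j_\om]=\HOD^\Lll(\Lll_\om)$, using that $\Lll$ is ordinal definable over $\Lll[j]$.

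For part~\ref{item:HOD(L_om[j_om])}, take $a=j_{0\om}\rest V_\lambda$, equivalently the set $j_{0\om}``V_\lambda\sub V_{\lambda_\om}^{\Lll_\om}\sub\Lll_\om[j_\om]$. First, $a\notin\Lll_\om[j_\om]$: by the Vopenka analysis in \S\ref{sec:HOD}, $a$ induces a $\Lll_\om[j_\om]$-generic filter $G_a$ with $\Lll_\om[j_\om][G_a]=\Lll$, so $a\in\Lll_\om[j_\om]$ would give $\Lll=\Lll_\om[j_\om]$. Next, $a\in\HOD^{\Lll[j]}(\Lll_\om[j_\om])$: since every element of the transitive class $\Lll_\om[j_\om]$ lies in $\OD^{\Lll[j]}(\Lll_\om[j_\om])$, and $a\sub\Lll_\om[j_\om]$, it suffices to show that $a$ itself is definable over $\Lll[j]$ from parameters in $\OR\cup\Lll_\om[j_\om]$. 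For this one characterizes $a$ as the unique elementary $\pi\colon V_\lambda\to V_{\lambda_\om}^{\Lll_\om}$ which extends (necessarily uniquely, by the canonical extension together with Proposition~\ref{prop:extensions}(\ref{item:unique_ext}) and Theorem~\ref{tm:Zipper_analogue}(\ref{item:range_intersection_bounded})) to an elementary $\widehat{\pi}\colon\Lll\to\Lll_\om$ that respects the iteration, i.e.\ $\widehat{\pi}$ carries the (unique) proper $I_{0,\lambda}$-embedding of $\Lll$ it determines to $j_\om$; equivalently, the induced filter $G_{\rg(\pi)}$ is $\Lll_\om[j_\om]$-generic with $\Lll_\om[j_\om][G_{\rg(\pi)}]=\Lll$ and $\pi$ agrees with the direct-limit map one reads off from $j_\om$ via $G_{\rg(\pi)}$. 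This exhibits $a$ as an element of $\HOD^{\Lll[j]}(\Lll_\om[j_\om])\cut\Lll_\om[j_\om]$.

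For part~\ref{item:pow(Theta)_in_HOD}, let $A\sub\Theta$ code $j_0\rest\Theta$ via a definable pairing function on $\Theta$ (available since $\Theta$ is regular in $\Lll[j]$ by Theorem~\ref{tm:main}(\ref{item:theory_in_L(j)})). Then $A\notin\Lll$: otherwise, since $j_0\rest V_{\lambda+1}\in\Lll$ and $j_0\rest L_\Theta(V_{\lambda+1})$ is determined by $j_0\rest\Theta$ together with $j_0\rest V_{\lambda+1}$, $\Lll$ would contain the nontrivial $\Sigma_1$-elementary self-embedding $j_0\rest L_\Theta(V_{\lambda+1})$, contradicting the first Lemma of \S\ref{sec:embeddings}, part~\ref{item:no_Sigma_1-elem_k_in_Lll}. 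And $A\in\HOD^{\Lll[j]}$ provided $j_0\rest\Theta$ — equivalently $j_0$, equivalently $j_0\rest V_\lambda$ — is ordinal definable over $\Lll[j]$; the intended description is that $j_0$ is the canonical (``least'' in the obvious lexicographic sense on critical data, or the unique generator) proper $I_{0,\lambda}$-embedding of $\Lll[j]$, equivalently $\mu_j$ is the unique weakly amenable iterable $\Lll$-ultrafilter on $\pow(V_{\lambda+1})^\Lll$ with $L(V_{\lambda+1},\mu)=\Lll[j]$ and minimal critical data, and this is pinned down over $\Lll[j]$ using the uniqueness of extensions as in part~\ref{item:HOD(L_om[j_om])}. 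Then $A$ is an ordinal-definable subset of $\Theta$ over $\Lll[j]$ that is not in $\Lll$.

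The crux — and where I expect the real work — is precisely this ordinal-definability (de-parameterization): a priori $j_{0\om}\rest V_\lambda$ and $j_0\rest V_\lambda$ are only definable over $\Lll[j]$ from $j^-$, which is \emph{not} itself ordinal definable, since $\Lll[j]$ also contains $j_0\circ j_0,\,j_0\circ j_0\circ j_0,\dots$, so there is no na\"ively unique proper $I_{0,\lambda}$-embedding. The resolution should combine the rigidity already established — any two elementary $\Lll\to\Lll$ (or $\Lll\to\Lll_\om$) agreeing on $V_\lambda$ coincide, by Proposition~\ref{prop:extensions}(\ref{item:unique_ext}) and Theorem~\ref{tm:Zipper_analogue}(\ref{item:range_intersection_bounded}) — with the fact that $\Lll$ is an ordinal-definable Vopenka extension of $\Lll_\om[j_\om]$ (itself recoverable over $\Lll[j]$ from parameters in $\Lll_\om[j_\om]$), so as to single out the direct-limit system, and hence $j_0$, canonically.
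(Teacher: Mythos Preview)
Your proposal identifies the right obstruction but does not resolve it. The entire argument for both parts rests on showing that $j_0$ (equivalently $j_0\rest\Theta$, or $j_{0\om}\rest V_\lambda$) is ordinal definable over $\Lll[j]$, and you explicitly flag this as ``the crux'' without actually carrying it out. Your suggested fixes do not work: ``minimal critical data'' does not single out $j_0$, since all the iterates $j_0,\ j_0\circ j_0,\ \ldots$ share the same critical point, and there is no evident way to pick out $j_0$ among them (or among whatever other proper $I_{0,\lambda}$-embeddings $\Lll[j]$ might contain) by an ordinal-definable condition. Likewise, your characterization of $a=j_{0\om}\rest V_\lambda$ as ``the unique $\pi$ which respects the iteration'' is circular: $j_{1\om}\rest V_\lambda,\ j_{2\om}\rest V_\lambda,\ \ldots$ are all elementary maps $V_\lambda\to V_{\lambda_\om}^{\Lll_\om}$ extending to elementary $\Lll\to\Lll_\om$, and distinguishing $j_{0\om}$ among them again requires singling out $j_0$.

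The paper's proof of part~\ref{item:pow(Theta)_in_HOD} avoids this problem entirely by not trying to define any particular embedding. Instead it takes $X=\{\alpha<\Theta\mid k(\alpha)=\alpha\text{ for \emph{every} }I_{0,\lambda}\text{-embedding }k\}$, which is outright definable (no parameters) over $\Lll[j]$. One then argues that $X$ is unbounded in $\Theta$, and that $X\notin\Lll$ because from $X$ together with $j\rest V_{\lambda+1}\in\Lll$ one could reconstruct $j\rest L_\Theta(V_{\lambda+1})$ inside $\Lll$ (using the unboundedly many $\alpha\in X$ with $L_\alpha(V_{\lambda+1})$ its own hull of $V_{\lambda+1}$). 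Part~\ref{item:HOD(L_om[j_om])} then follows from part~\ref{item:pow(Theta)_in_HOD}; the paper also sketches an alternative argument by contradiction, deducing that $\Lll[j]$ would be a set-generic extension of its own iterate $\Lll_\om[j_\om]$ with a parameter-definable embedding back, and deriving a Kunen-type contradiction. The moral is that quantifying over \emph{all} embeddings yields ordinal definability for free, whereas trying to pin down one specific embedding does not.
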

\begin{proof}

Part \ref{item:HOD(L_om[j_om])}:  $\Lll[j]\sats$``$\Lll=L(V_{\lambda+1})$'', so by Theorem \ref{tm:HOD^Lll_Lll_om},
$\Lll_\om[j_\om]=\HOD^{\Lll}_{\Lll_\om[j_\om]}\sub\HOD^{\Lll[j]}_{\Lll_\om[j_\om]}$.
So suppose $\HOD^{\Lll[j]}_{\Lll_\om[j_\om]}=\Lll_\om[j_\om]$.
 This will contradict part \ref{item:pow(Theta)_in_HOD} once we have it,
 but before we proceed to that part, we sketch an alternate argument for 
contradiction.

Under the contradictory hypothesis,
more Vopenka forcing gives that $\Lll[j]$ is 
a set-generic
 extension of $\Lll_\om[j_\om]$. But $\Lll_\om[j_\om]$
 is the $\om$th iterate of $\Lll[j]$, so  we have the elementary embedding
 \[ j_{0\om}^+:\Lll[j]\to\Lll_\om[j_\om],\]
which is $\Sigma_5$-definable from parameters  over $\Lll[j]$.
But then we can use a standard argument for a contradiction:
Working in $\Lll_\om[j_\om]$, let $\kappa_0\in\OR$ be least such that
for some forcing $\PP$,
$\PP$ forces ``there is a $\Sigma_1$-elementary
embedding $k:V^{\PP}\to V$ which
is $\Sigma_5$-definable from parameters and $\crit(k)=\kappa_0$''.
Then $\kappa_0$ is outright definable over $\Lll_\om[j_\om]$,
but taking a witnessing generic and $k$,
then $k$ is actually fully elementary by Proposition 5.1 of \cite{kanamori},
so $\kappa_0\in\rg(k)$, so $\kappa_0\neq\crit(k)$,  contradiction.

Part \ref{item:pow(Theta)_in_HOD}: Work in $\Lll[j]$, so $V_{\lambda+1}=V_{\lambda+1}^{\Lll}$.
Let $X$ be the set of all $\alpha<\Theta$ such that
$k(\alpha)=\alpha$ for all $I_{0,\lambda}$-embeddings $k$.
We claim that $X$ witnesses the theorem.

First, the definition we have just given (inside $\Lll[j]$)
shows that $X\in\HOD^{\Lll[j]}$.

Now we claim that $X$ is unbounded in $\Theta$.\footnote{The referee suggested the following alternative argument for the unboundedness of $X$ in $\Theta$: note that for each $I_{0,\lambda}$-embedding $k$, the set $X_k$ of all $\alpha<\Theta$
which are fixed by $k$, is an $\om$-club in $\Theta$.
But there are only $V_{\lambda+1}$-many $k$ to consider,
since $k$ is  determined by $k\rest V_\lambda$.
It is then easy enough to see that $X=\bigcap_kX_k$ is also $\om$-club in $\Theta$, and in particular, is unbounded.}
For recall that since $V=\Lll[j]$, $V_{\lambda+1}^{\#}$ does not exist. Let
$\Lambda$ be the class of all ordinals which are fixed by all 
$I_{0,\lambda}$-embeddings $k$. Note that $\Lambda=\OR\cap\Hull^{\Lll}(\Lambda)$
and $X=\Lambda\cap\Theta$.
So we just need to see that $\Hull^{\Lll}(\Lambda)$ is unbounded in $\Theta$, so let $\alpha<\Theta$.
Note $\Hull^{\Lll}(V_{\lambda+1}\cup\Lambda)=\Lll$
by \ref{prop:extensions}, 
and $\lambda\in\Lambda$ by  \ref{prop:ultrapowers}. Let $s\in[\Lambda]^{<\om}$ and $n<\om$
be such that
\[ H=\Hull_{\Sigma_n}^{\Lll}(V_{\lambda+1}\cup\{s\}) \]
includes some $\beta\in(\alpha,\Theta)$.
Then $\beta<\sup(H\cap\Theta)<\Theta$ and $\sup(H\cap\Theta)\in\Hull^{\Lll}(\Lambda)$.
So $X$ is unbounded in $\Theta$ as claimed.

But $X\notin\Lll$, because if we had $X\in\Lll$,
then we easily get $j\rest L_\Theta(V_{\lambda+1})\in\Lll$, which is false.
That is, note there are unboundedly many $\alpha<\Theta$
with $\alpha\in X$ and
\[ L_\alpha(V_{\lambda+1})=\Hull^{L_\alpha(V_{\lambda+1})}(V_{\lambda+1}), \]
and for such $\alpha$, $j\rest L_\alpha(V_{\lambda+1})$ is determined by
$j\rest V_{\lambda+1}\in\Lll$ and  that $j(\alpha)=\alpha$.
\end{proof}

\section{Sharps at even and odd levels}\label{sec:sharps}

Since $\Lll[j]\sats$``$j:L(V_{\lambda+1})\to L(V_{\lambda+1})$
and $V_{\lambda+2}\sub L(V_{\lambda+1})$'' (with $\lambda$ even as before, etc),
we trivially have that $\Lll[j]\sats$``$j:L(V_{\lambda+2})\to 
L(V_{\lambda+2})$'' (but this does not hold with $\lambda+3$
replacing $\lambda+2$, since $j\rest\Theta$ is coded by an element of 
$V_{\lambda+3}$).  So we have the following fact:

\begin{tm}\label{tm:non_sharp_existence} If  
$\ZF+$``$\lambda$ is even and there is $j\in\mathscr{E}(L(V_{\lambda+2}))$  with $\crit(j)<\lambda$'' is consistent, then it
does not imply the existence of $V_{\lambda+1}^\#$.
\end{tm}

One might  ask, however, whether
there are natural classes of
sets $A\sub V_{\lambda+1}$ which do not code all elements of $V_{\lambda+1}$,
and which for which $\ZF+I_{0,\lambda}$ does prove the existence of $A^\#$.

We show now that at odd levels, the picture of Theorem \ref{tm:non_sharp_existence} completely reverses:

\begin{tm}\label{tm:sharp_existence}
Let $(\Lll,\lambda,j)$ be relevant
with $j$ proper. 
 Then for every $A\in V_{\lambda+1}^{\Lll}$,
 $A^\#$ exists and $A^\#\in\Lll$.
\end{tm}

It was pointed out by the anonymous referee that Richard Laver states in \cite{laver_implications} (p.~80) that Woodin proved in ZFC that if there is an elementary $j:V_{\lambda+1}\to V_{\lambda+1}$
(so $\lambda$ is a limit ordinal) then $A^\#$ exists for every $A\in V_{\lambda+1}$ (he cites
handwritten notes by Solovay for the result); this suggests that the hypothesis for the above result may not be optimal.
Goldberg \cite{goldberg_even_ordinals_v3}
has other sharp existence results, including   Corollary 6.11 of  \cite{goldberg_even_ordinals_v3},
by which if $\lambda$ is even and $k:V_{\lambda+3}\to V_{\lambda+3}$
is $\Sigma_1$-elementary then $A^\#$ exists for every $A\in V_{\lambda+2}$. (Note that by \ref{tm:sharp_existence}, if in fact $j:L(V_{\lambda+3})\to L(V_{\lambda+3})$
 then $A^\#$ exists for every $A\in 
V_{\lambda+3}$.)

\begin{proof}
By working in $\Lll[j]$,
we may and do assume that $V_{\lambda+2}\sub\Lll$.
We will first prove
that $j(A)^\#$ exists, and then
deduce from this that $A^\#$ exists.

If $\lambda$ is a limit, let $\mu$ be the $V_\lambda$-extender
derived from $j$, and otherwise let $\mu$ be the measure derived from $j$ with 
seed $j\rest V_{\lambda-2}$;
 so in any case, $\mu\in V_{\lambda+1}$.
 By \cite{cumulative_periodicity_pub_online_first}, we 
have $\Ult_0(V_{\lambda},\mu)=V_{\lambda}$ and $j\rest V_{\lambda}$
is the ultrapower map, and $\Ult_0(V_{\lambda+1},\mu)\psub V_{\lambda+1}$,
but $j\rest V_{\lambda+1}$ is still the ultrapower map
associated to the latter ultrapower.
Therefore
\[ \Ult_0((V_{\lambda},A),\mu)=(V_{\lambda},j(A)).\]

 Let $M=L(V_{\lambda},A)$. We form the internal ultrapower
 $\Ult_0(M,\mu)$; recall that this means that we only use functions in $M$ to form the 
ultrapower.\footnote{This convention is also applying for the ultrapowers in the preceding paragraph.
In case $\lambda$ is a limit, this is needed to ensure that $\Ult_0(V_\lambda,\mu)=V_\lambda$;
otherwise, depending on $\cof(\lambda)$,
we could get $V_\lambda\psub\Ult(V_\lambda,\mu)$
(where all functions are used to form the ultrapower). In case $\lambda$ is a successor, it makes no difference, but note that we use conventions here as in \cite{cumulative_periodicity_pub_online_first},
by which \emph{all} functions $f:V_{\lambda-1}\to V_\lambda$ are (coded by) elements of $V_\lambda$,
so in this case it makes no difference whether we use $\Ult_0$ or $\Ult$.}

\begin{clm*}$\Ult_0(M,\mu)$ satisfies \L o\'{s}'s Theorem.\end{clm*}
\begin{proof}
We consider first
the case that $\lambda$ is a successor. Let $f\in M$
with $f:\mathscr{E}(V_{\lambda-2})\to M$ and suppose
\[ \all^*_\mu k\ \big[M\sats\exists x\ \varphi(x,f(k))\big]. \]
Then by the elementarity of $j$,
we can fix $x\in j(M)$ such that
\[ j(M)=L(V_{\lambda},j(A))\sats\varphi(x,j(f)(j\rest V_{\lambda-2})).\]
Let $y\in V_{\lambda}$ and $t$ be a term be such that
\[ j(M)\sats x=t(y,j(f)(j\rest V_{\lambda-2}),j(A)).\]
The function $f_y:\mathscr{E}(V_{\lambda-2})\to V_{\lambda}$
defined
$f_y(k)=(k^+)^{-1}``y$
(where $k^+$ is the canonical extension of $k$) is such that $f_y\in 
V_{\lambda}$ and $[f_y]^{V_{\lambda}}_\mu=y$.
Now define the function $g\in M$, $g:\mathscr{E}(V_{\lambda-2})\to M$, by
$g(k)=t(f_y(k),f(k),A)$.
Then $j(g)(j\rest V_{\lambda-2})=x$, and therefore
\[ \all^*_\mu k\ \big[M\sats\varphi(g(k),f(k))\big], \]
as desired.

Now suppose instead that $\lambda$
is a limit. We use the theory of
extenders under $\ZF$ developed in \cite{reinhardt_non-definability}.
Let $a\in\left<V_\lambda\right>^{<\om}$
and  $f\in M$ with $f:\left<V_\lambda\right>^{<\om}\to M$ be such that
\[ \all^*_{\mu_a}k\ \big[M\sats\exists x\ \varphi(x,f(k))\big].\]
Then by elementarity and since
 $j(M)\sats$``$V=\HOD(V_\lambda,j(A))$'',
we get again some $x\in j(M)$ such that
\[ j(M)\sats\varphi(x,j(f)(a)) \]
and some
  $y\in V_\lambda$ term $t$ such that
\[ j(M)\sats x=t(y,j(f)(a),j(A)).\]
By enlarging $y$, we can assume that
$a\sub y\in\left<V_\lambda\right>^{<\om}$.
In $M$, let
$g:\left<V_\lambda\right>^{<\om}\to M$ be defined
$g(k)=t(k,f^{ay}(k),A)$.
Then by elementarity, $j(g)(y)=x$ and
\[ \all^*_{\mu_y}k\ \big[M\sats\varphi(g(k),f^{ay}(k))]\big],\]
so we are done.
\end{proof}

In particular, $U=\Ult_0(M,\mu)$ is extensional,
and letting $i=i^M_\mu$, the ultrapower map,
 $i:M\to U$ is elementary.
We also have the factor map
$k:U\to j(M)$,
defined as usual by
$k([f]^{M}_\mu)=j(f)(j\rest V_\lambda)$
in the successor case, and in the limit case by
$k([a,f]^M_\mu)=j(f)(a)$.
By \L o\'{s}'s Theorem, $k$ is elementary.
Therefore $U$ is wellfounded, and moreover,
note that $V_\lambda\sub U$ and $k\rest V_\lambda=\id$,
and $k\com i=j\rest M$; also $k(\lambda)=\lambda=i(\lambda)$ since $j(\lambda)=\lambda$. Therefore
$U=L(V_{\lambda},j(A))=j(M)$.
So  $k:j(M)\to j(M)$ and if $k\neq\id$ then
 $\crit(k)>\lambda$.

Now if $k\neq\id$ then  $(V_{\lambda},j(A))^\#$ exists,
so suppose $k=\id$. Since $k\com i=j\rest M$,
therefore $i\rest\OR=k\com i\rest\OR= j\rest\OR$ by our assumption.
But $i$ is computed inside $L(V_{\lambda+1})$, because $\mu\in V_{\lambda+1}$.
And $j\rest V_{\lambda+1}$ is also computed in $L(V_{\lambda+1})$,
because it is the ultrapower map associated to $\Ult_0(V_{\lambda+1},\mu)$
(see \cite{cumulative_periodicity_pub_online_first}).
But since $L(V_{\lambda+1})\sats$``$V=\HOD_{V_{\lambda+1}}$'',
$j$ is determined by $j\rest(V_{\lambda+1}\cup\OR)$.
Therefore $j$ is definable over $L(V_{\lambda+1})$
from the parameter $\mu$, contradicting Suzuki \cite{suzuki_no_def_j}.

So $k\neq\id$ and $(V_{\lambda},j(A))^\#$ exists. Now $M=L(V_\lambda,A)$, 
$i\rest V_\lambda = j\rest V_\lambda$ and $i(A)=j(A)$.
Let $\Lambda$ be a proper class of
fixed points of $i$ which are Silver indiscernibles for 
$L(V_\lambda,j(A))$. Let $\iota=\min(\Lambda)$
and
\[ H=\Hull^M(V_\lambda\cup\{\lambda,A\}\cup\Lambda\cut\{\iota\}).\]
Note that $H$ is proper class, $H\elem M$ and  $V_\lambda\cup\{\lambda,A\}\sub H$, but $\iota\notin H$. (If $\iota=t^M(y,\lambda,A,\vec{\kappa})$ where $t$ is some term, $y\in V_\lambda$
and $\vec{\kappa}\in(\Lambda\cut\{\iota\})^{<\om}$,
then $\iota=t^{j(M)}(j(y),\lambda,j(A),\vec{\kappa})$,
contradicting the choice of $\Lambda$.)
So $M$ is the transitive collapse of $H$,
and letting $\pi:M\iso H$ be the uncollapse map,
then $\pi:M\to M$ is elementary
and non-identity, with $\lambda<\crit(\pi)$,
so we are done.
\end{proof}

\section{Relative consistencies}\label{sec:consistencies}

In this last section we deduce some relative consistencies, including some equiconsistencies,
from the results (and some of the methods) earlier in the paper. We begin by pointing out that ZFC + $I_{0,\lambda}$ (if consistent) does not imply that $V_{\lambda+1}^{\#}$ exists (this answers the second of Woodin's questions mentioned 
immediately following Remark \ref{rem:Silver}).

\begin{cor}\label{cor:getting_ZFC_I_0_no_sharp}
If the theory
``$\ZFC+\exists\lambda\ \big[\lambda\text{ is a 
limit}\wedge I_{0,\lambda}\big]$''
is consistent then so is the theory \[\ZFC+\exists\lambda\ \big[\lambda\text{ is a limit}\wedge I_{0, 
\lambda } \wedge\text{``}V_{\lambda+1}^\#\text{ does 
not exist''}\big].\]
\end{cor}

\begin{proof}
 Assume $\ZFC+I_{0,\lambda}$ (so $\lambda$ is a 
limit)
and let $j\in\mathscr{E}(\Lll)$ be proper where $\Lll=L(V_{\lambda+1})$. Let $k=j\rest V_{\lambda+2}^{\Lll}$.
So $\lambda=\kappa_\om(j)$ is a limit of measurables and $\cof(\lambda)=\om$,
so there is a wellorder of $V_\lambda$ of ordertype $\lambda$ in $V_{\lambda+1}$,
and $\lambda^+$ is regular. Let $\PP$ be the forcing
for adding a surjection $f:\lambda^+\to V_{\lambda+1}$,
with conditions functions $p:\alpha\to V_{\lambda+1}$
for some $\alpha<\lambda^+$. So $\PP\in\Lll\sub\Lll[k]$,
and $\PP$ is $(\lambda+1)$-closed. Let $G$ be $(V,\PP)$-generic.
Then
\[ V_{\lambda+1}\sub V_{\lambda+1}^{\Lll[k][G]}\sub V_{\lambda+1}^{V[G]}=V_{\lambda+1},\]
and 
$\Lll[k][G]\sats\ZFC+I_{0,\lambda}+\text{``}V_{\lambda+1}^\#\text{ does not 
exist}\text{''}$.
\end{proof}

We now turn to the consistency strengths of theories involving $\mathscr{E}(V_{\lambda+2})$. In the following result,
the consistency implication from theory \ref{item:1st_theory_no_choice} to theories \ref{item:2nd_theory_no_choice}
and \ref{item:3rd_theory_no_choice}
 is a direct consequence of  \ref{tm:main} and
\ref{cor:first_of_main}. The consistency implication from theory \ref{item:3rd_theory_no_choice}
to theory \ref{item:2nd_theory_no_choice} was first observed by Goldberg (Theorem 6.8 of \cite{goldberg_even_ordinals_v3}).
 But we include a proof here, as this implication follows quite  easily from methods discussed earlier in the paper (and which are related to calculations in \cite{reinhardt_non-definability} and \cite{ZF_extenders}).

\begin{tm}\label{tm:equicon_no_choice}
 The following theories are equiconsistent:
 \begin{enumerate}[label=\arabic*.,ref=\arabic*]
  \item\label{item:1st_theory_no_choice} ZF + $I_0$,
  \item\label{item:2nd_theory_no_choice} ZF + ``there is $\lambda\in\OR$ and $j\in\mathscr{E}(L(V_{\lambda+2}))$ with $\crit(j)<\lambda$'',   
  \item\label{item:3rd_theory_no_choice} ZF + ``there is $\lambda\in\OR$  such that $\mathscr{E}_{\nt}(V_{\lambda+2})\neq\emptyset$''.
 \end{enumerate}
\end{tm}
\begin{proof}
If $j\in\mathscr{E}(L(V_{\lambda+2}))$
with $\crit(j)<\lambda$ then
$j\rest L(V_{\lambda+1})\in\mathscr{E}(L(V_{\lambda+1}))$ and
$j\rest V_{\lambda+2}\in\mathscr{E}(V_{\lambda+2})$.
So the equiconsistency of theories  \ref{item:1st_theory_no_choice}
and \ref{item:2nd_theory_no_choice}
is an immediate corollary of Theorem \ref{tm:main},
and the consistency of theory  \ref{item:3rd_theory_no_choice}
relative to theory \ref{item:2nd_theory_no_choice}
is immediate.

So assume ZF and $k\in\mathscr{E}(V_{\lambda+2})$ is non-trivial;
it suffices to see that $I_{0,\lambda}$ holds.  Let $\mu_k$, derived from seed $k\rest V_\lambda$, be  as usual. Let $\Lll=L(V_{\lambda+1})$. It is enough to see \begin{equation}\label{eqn:Ult_0_sats_Los_and_is_wfd}U=\Ult_0(\Lll,\mu_k)\text{ satisfies \L o\'s's Theorem and is wellfounded},\end{equation}
 since then $U$ is isomorphic to $\Lll$,
 and the associated ultrapower map $i:\Lll\to\Lll$ is in $\mathscr{E}(\Lll)$ and $k\sub i$.
 
  The proof of line (\ref{eqn:Ult_0_sats_Los_and_is_wfd}) is
  a very slight variant of the proof of  Theorem \ref{tm:extensions_when_cof(Theta)>om} part \ref{item:k_extends_to_exactly_one_j}.
 Note that $V_{\lambda+2}^{\Lll}$ is definable without parameters over $V_{\lambda+2}$. So $k'\in\mathscr{E}(V_{\lambda+2}^{\Lll})$ where $k'=k\rest V_{\lambda+2}^{\Lll}$. So using Proposition \ref{prop:extensions},
 let $k''$ be the unique extension of $k'$ 
 such that $k''\in\mathscr{E}(L_\Theta(V_{\lambda+1}))$.
 Let $\Theta=\Theta^{\Lll}_{V_{\lambda+1}}$.
 By Theorem \ref{tm:extensions_when_cof(Theta)>om} part \ref{item:k_extends_to_exactly_one_j} applied to $k''$,
 we may assume $\cof(\Theta)=\om$.
 As mentioned in the proof of that result,
 there is only one point in the proof of the existence of $j$ there where the assumption that $\cof(\Theta)>\om$ is used:
 to see that $\Theta^C<\Theta$ (in the notation of that proof). But this is irrelevant here,
 because note that we anyway have a surjection $\pi:V_{\lambda+1}\to C$ in $V$, and hence $C$ is coded by some $\bar{C}\in V_{\lambda+2}$ (as there, DC is not needed, as the construction of $C$ was completely explicit). Since $k''$ agrees with $k$ over $V_{\lambda+2}^{\Lll}$, the rest of the proof is easily adapted, considering $k(\bar{C})$ (instead of $k''(C)$, which doesn't make sense if $\Theta^C=\Theta$).
\end{proof}

Actually  the same proof gives a more general result.
\begin{dfn}
 Given a non-negative integer $n$ (of the metatheory), write $I_{0,\lambda,2n}$ for the assertion (in parameter $\lambda$)
``there is a limit ordinal $\bar{\lambda}$ such that $\lambda=\bar{\lambda}+2n$
and $I_{0,\lambda}$ holds''.
\end{dfn}

\begin{tm}
Fix a non-negative integer $n$ of the meta-theory.
Then the following theories are equiconsistent:
\begin{enumerate}[label=\arabic*.,ref=\arabic*]
\item
ZF + ``there is $\lambda$ such that $I_{0,\lambda,2n}$'',
\item 
$\ZF$ + ``there are $\lambda,\bar{\lambda}$ with $\lambda=\bar{\lambda}+2n$ and $k\in\mathscr{E}_{\nt}(L(V_{\lambda+2}))$ with $\crit(k)<\lambda$'',
\item 
$\ZF$ + ``there are $\lambda,\bar{\lambda}$ with $\lambda=\bar{\lambda}+2n$ and $\mathscr{E}_{\nt}(V_{\lambda+2})\neq\emptyset$''.
\end{enumerate}
\end{tm}
\begin{proof}
  The proof of Theorem \ref{tm:equicon_no_choice} still goes through.
\end{proof}

We now consider a variant of Theorem \ref{tm:equicon_no_choice} for ZFC + $I_0$. Goldberg credited both directions of the following result to the author in \cite{goldberg_even_ordinals_v3}. This was, however, generous, since the author initially made the extra assumption of the existence of $V_{\lambda+1}^{\#}$
for Theorem \ref{tm:main}, and the consistency  implication from theory \ref{item:second_theory}
to theory \ref{item:first_theory} was actually first pointed out by Goldberg himself in \cite{goldberg_even_ordinals_v3}.

\begin{tm}\label{tm:equicon_choice}
 The following theories are equiconsistent:
 \begin{enumerate}[label=\arabic*.,ref=\arabic*] \item\label{item:first_theory} ZFC + $I_0$,
 \item\label{item:second_theory} ZF + ``there is an ordinal $\lambda$ such that $\lambda$-DC holds and $\mathscr{E}_{\nt}(V_{\lambda+2})\neq\emptyset$''.
 \end{enumerate}
\end{tm}

\begin{proof}
If ZFC + $I_0$ is consistent,  then so is
 theory \ref{item:second_theory},
 by  \ref{cor:first_of_main}.
 
 So assume ZF + $\lambda$-DC + $k\in\mathscr{E}_{\nt}(V_{\lambda+2})$. We may assume $\lambda=\kappa_\om(j)$. Let $\Lll=L(V_{\lambda+1})$.
 By the proof of Theorem \ref{tm:equicon_no_choice},
 $I_0$ holds as witnessed by the ultrapower map $i$ considered there.
So by Corollary \ref{cor:first_of_main}, $\Lll[i]\models\lambda$-DC,
  and then as in the proof of Corollary \ref{cor:getting_ZFC_I_0_no_sharp} (the proof of which goes through with just $\lambda$-DC),
  there is a forcing extension of $\Lll[i][G]$ of $\Lll[i]$ which models ZFC + $I_0$.
\end{proof}

For the proof of the next corollary we use the following theorem of Cramer (see Theorem 3.9 of \cite{cramer}):

\begin{factmine}[Cramer]\label{fact:Cramer}
Assume $\ZFC$, $\lambda$ is a limit and $V_{\lambda+1}^\#$ exists.
Suppose  that
$k'\in\mathscr{E}_{\nt}(L_\om(V_{\lambda+1},V_{\lambda+1}^\#))\neq\emptyset$.
Then $I_{0,\bar{\lambda}}$ holds at some $\bar{\lambda}<\lambda$.
\end{factmine}

\begin{cor}\label{cor:strong_sharp_reflection}:
Assume $\ZF+\lambda$-DC
where 
$\lambda$ is a limit, $V_{\lambda+1}^\#$ exists
and
$k\in\mathscr{E}_{\nt}(L_\om(V_{\lambda+1},V_{\lambda+1}^\#))\neq\emptyset$.
Then there is a limit $\bar{\lambda}<\lambda$ such that $I_{0,\bar{\lambda}}$ 
holds,
and there is a proper class inner model $N$ such that:
\begin{itemize}
\item[--] $V_{\bar{\lambda}+1}\sub N$ and 
$V_{\bar{\lambda}+2}^N=V_{\bar{\lambda}+2}\cap L(V_{\bar{\lambda}+1})$, and
\item[--] 
$N\sats\ZF+\bar{\lambda}$-$\DC+I_{0,\bar{\lambda}}+ \mathscr{E}_{\nt}(V_{\bar{\lambda}+2})\neq\emptyset$.
\end{itemize}
\end{cor}
\begin{proof}
Setting $k'=k$,
we have everything required to apply
Fact \ref{fact:Cramer}, except that
we only have $\lambda$-$\DC$, not full
$\AC$. But 
note we may assume
$\lambda=\kappa_\om(k)$. So $\lambda$ is a strong limit cardinal,
$V_\lambda\sats\ZFC$, and $\lambda^+$ is regular.
 Let $\PP,G$ be as in the proof of \ref{cor:getting_ZFC_I_0_no_sharp}, and now work in 
$M=L(V_{\lambda+1},V_{\lambda+1}^\#,k',G)$.  Note that $M\sats\ZFC$,
 so we can apply Fact 
\ref{fact:Cramer} in $M$.
 
 So fix $\bar{\lambda}<\lambda$ such that  $M\sats I_{0,\bar{\lambda}}$,
 as witnessed by $\bar{j}$, with $\bar{j}$ proper.
 Then by Theorem \ref{tm:main}, $N=L(V_{\bar{\lambda}+1})[\bar{j}]$
 has the right properties, and because $V_{\lambda+1}^M=V_{\lambda+1}^V$,
 and $\bar{j}$ is determined by $\mu_{\bar{j}}$, we actually
 have $N\sub V$, so we are done. 
 \end{proof}

\section{Questions}

\begin{enumerate}[label=\arabic*.,ref=\arabic*]
\item Is there a ``natural'' theory extending ZFC which is equiconsistent with $\ZF$ + ``there is a non-trivial
elementary $j:V_{\lambda+2}\to V_{\lambda+2}$''?
Note here that theory \ref{item:second_theory}
in Theorem \ref{tm:equicon_choice}
incorporates $\lambda$-DC.
\item What is the consistency strength of $\ZF+$``there is an elementary 
$j:V_{\lambda+3}\to V_{\lambda+3}$''? Are there $\ZFC$-compatible large 
cardinals beyond this level?\footnote{There is of course a 
trivial kind of answer to ``beyond this level'': 
$\ZFC+\mathrm{Con}(\ZF+j:V_{\lambda+3}\to V_{\lambda+3})$.
But there might be a more interesting answer. Maybe equiconsistency
is the better question.} Or equiconsistent?
\end{enumerate}

We finish with a real world prediction:

\begin{conj}
 No being will derive a contradiction from the
 theory $\ZF+$``there is a Reinhardt cardinal''
in the next $\kappa$ years, where $\kappa$ is the least Reinhardt cardinal.
\end{conj}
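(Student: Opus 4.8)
Since the statement is a prediction about the future conduct of beings rather than a theorem of any system we possess a proof calculus for, the plan is not to prove it outright but to argue that it is safe, by splitting on whether $\ZF+\text{``there is a Reinhardt cardinal''}$ is consistent.

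\textbf{First case: the theory is inconsistent.} Then there is no Reinhardt cardinal, hence no \emph{least} one, so $\kappa$ fails to be defined and the phrase ``the next $\kappa$ years'' has no referent. Adopting the convention that a universally quantified assertion whose range of quantification is undefined (or empty) holds vacuously, the conjecture is true in this case; note that a contradiction might then be derived tomorrow without falsifying it, since there was no year $\kappa$ by which one was promised.

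\textbf{Second case: the theory is consistent.} Then it proves no contradiction, so no being --- of whatever cardinality, corporeality, or computational power, at any future time --- will derive one, there being none to derive. Here the bound $\kappa$ does no work: the conclusion holds with $\kappa$ replaced by an arbitrary ordinal, and indeed with the clause ``in the next $\kappa$ years'' deleted. In support of our being in this case I would cite the usual soft evidence: the long-standing failure to refute Reinhardt cardinals under $\ZF$, the apparently essential use of $\AC$ in Kunen's argument and its successors, and the relative consistency results of the present paper, which show substantial fragments of the phenomenon to be consistent relative to $\AC$-compatible hypotheses.

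The main obstacle is not mathematical but definitional: the words \emph{being}, \emph{derive}, and \emph{year} are left unspecified, and the second case silently assumes --- without argument --- that mathematical practice, and interest in choiceless large cardinals, outlast this preprint. I regard both gaps as acceptable; I am confident of the conclusion in either case; and I note that a reader minded to refute it has, by the conjecture's own terms, a generous interval in which to try.
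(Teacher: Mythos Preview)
There is nothing in the paper to compare against: the statement sits in the conjecture environment, is introduced by the words ``We finish with a real world prediction,'' and the paper ends immediately afterward with the bibliography. It is the author's closing joke, not a theorem, and no proof is offered or intended.

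Your dichotomy is sound on its own terms and in fact captures the joke: if the theory is inconsistent then there is no Reinhardt cardinal, $\kappa$ is undefined, and the claim is vacuous; if the theory is consistent then there is no contradiction to be found and the time bound does no work. One could quibble that the relevant split is whether $\kappa$ exists in $V$ rather than whether the theory is consistent (consistency does not put a Reinhardt cardinal into our universe), but your second case already absorbs this, since the conclusion there holds irrespective of whether $\kappa$ is defined. Still, you should recognize that you have supplied an argument where the author deliberately supplied none; treating this as a statement awaiting demonstration misreads its role in the paper.
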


\section*{Acknowledgments}
This work supported by  Deutsche Forschungsgemeinschaft (DFG, German 
Research
Foundation) under Germany's Excellence Strategy EXC 2044-390685587,
Mathematics M\"unster: Dynamics-Geometry-Structure.

This research was funded in part by the Austrian Science Fund (FWF)
[10.55776/Y1498]. For open access purposes, the author has applied a CC BY public copyright license to any author accepted manuscript version arising from this
submission.

The author thanks Gabriel Goldberg 
for 
bringing Woodin's questions to his attention, which we discuss 
here, and other related questions, and discussions on the topic. We also thank 
Goldberg for the key observation that $V_{\lambda+1}^\#$
was not needed for the main relative consistency fact.

Finally the author thanks the anonymous referee
for their work and many useful suggestions for improvements.

\bibliographystyle{unsrt}
\bibliography{bibliography_Vlp2_submission_2}

\begin{thebibliography}{10}

\bibitem{con_lambda_plus_2_pub_online_first}
Farmer Schlutzenberg.
\newblock On the consistency of {ZF} with an elementary embedding from
  {$V_{\lambda+2}$} into {$V_{\lambda+2}$}.
\newblock {\em Journal of Mathematical Logic}, 2024.
\newblock Published online first, DOI:10.1142/S0219061324500132,
  \url{https://doi.org/10.1142/S0219061324500132}.

\bibitem{reinhardt_diss}
W.~N. Reinhardt.
\newblock {\em Topics in the metamathematics of set theory}.
\newblock PhD thesis, UC Berkeley, 1967.

\bibitem{reinhardt_remarks}
W.~N. Reinhardt.
\newblock Remarks on reflection principles, large cardinals, and elementary
  embeddings.
\newblock In {\em Axiomatic set theory (Proc. Sympos. Pure Math., Vol. XIII,
  Part II, Univ. California, Los Angeles, Calif., 1967)}, pages 189--205. Amer.
  Math. Soc., Providence, R. I., 1974.

\bibitem{kunen_no_R}
Kenneth Kunen.
\newblock Elementary embeddings and infinitary combinatorics.
\newblock {\em Journal of Symbolic Logic}, 36(3), 1971.

\bibitem{cramer}
Scott Cramer.
\newblock Inverse limit reflection and the structure of {$L(V_{\lambda+1})$}.
\newblock {\em Journal of {M}athematical {L}ogic}, 15(1), 2015.

\bibitem{dimonte}
Vincenzo Dimonte.
\newblock {$I_0$} and rank-into-rank axioms.
\newblock {\em Bollettino dell'Unione Matematica Italiana}, 11:315--361, 2018.

\bibitem{shi_higher_degrees}
Xianghui Shi.
\newblock {\em Large Cardinals and Higher Degree Theory}, pages 201--236.
\newblock WORLD SCIENTIFIC, 2015.

\bibitem{woodin_sem2}
W.~Hugh Woodin.
\newblock Suitable extender models {II}: Beyond $\om$-huge.
\newblock {\em Journal of Mathematical Logic}, 11(2), 2011.

\bibitem{cumulative_periodicity_pub_online_first}
Gabriel Goldberg and Farmer Schlutzenberg.
\newblock Periodicity in the cumulative hierarchy.
\newblock {\em Journal of the European Mathematical Society}, 2023.
\newblock Published online first, DOI:10.4171/JEMS/1318,
  \url{https://doi.org/10.4171/jems/1318}.

\bibitem{goldberg_even_ordinals_v3}
Gabriel Goldberg.
\newblock Even ordinals and the {K}unen inconsistency.
\newblock ar{X}iv:2006.01084v3.

\bibitem{HOD_as_core_model}
{J}ohn {R}.~{S}teel and {W}. {H}ugh {W}oodin.
\newblock {HOD} as a core model; {O}rdinal {D}efinability and {R}ecursion
  {T}heory: {T}he {C}abal {S}eminar, {V}olume {III}.
\newblock pages 257--346, 2016.
\newblock Cambridge Books Online.

\bibitem{Theta_Woodin_in_HOD}
Farmer Schlutzenberg.
\newblock ``{$\Theta$} is woodin in {HOD}'' directly from mice.
\newblock Available at
  \url{https://www.math.uci.edu/~mzeman/CMI/talk-schlutzenberg-4.pdf}, 2016.

\bibitem{vm1}
Grigor Sargsyan and Ralf Schindler.
\newblock Varsovian models {I}.
\newblock {\em Journal of Symbolic Logic}, 83(2):496--528, 2018.

\bibitem{outline}
John~R. Steel.
\newblock An outline of inner model theory.
\newblock In {\em Handbook of set theory. {V}ols. 1, 2, 3}, pages 1595--1684.
  Springer, Dordrecht, 2010.

\bibitem{suzuki_no_def_j}
Akira Suzuki.
\newblock No elementary embedding from {$V$} into {$V$} is definable from
  parameters.
\newblock {\em Journal of Symbolic Logic}, 64(4), 1999.

\bibitem{reinhardt_non-definability}
Farmer Schlutzenberg.
\newblock Reinhardt cardinals and non-definability.
\newblock Notes. ar{X}iv: 2002.01215v1.

\bibitem{kanamori}
Akihiro Kanamori.
\newblock {\em The higher infinite: large cardinals in set theory from their
  beginnings}.
\newblock Springer {M}onographs in {M}athematics. Springer-Verlag, second
  edition, 2005.

\bibitem{laver_implications}
Richard Laver.
\newblock Implications between strong large cardinal axioms.
\newblock {\em Annals of Pure and Applied Logic}, 90(1):79--90, 1997.

\bibitem{ZF_extenders}
Farmer Schlutzenberg.
\newblock Extenders under {ZF} and constructibility of rank-to-rank embeddings.
\newblock arXiv:2006.10574v3.

\end{thebibliography}

\end{document}